\begin{document}

%\title{\vspace{-1truecm}Spatial-segregation limit for exclusion processes with two components under unbalanced reaction}
%\author{Kohei Hayashi\thanks{Graduate School of Mathematical Sciences, The University of Tokyo, Komaba, Tokyo 153-8914, Japan. 
%e-mail: kohei@ms.u-tokyo.ac.jp}}
%\date{\today}
%\maketitle

%%%%%%%%%%%%%%%%%%
%   For amsart          %
%%%%%%%%%%%%%%%%%%
\title[Unbalanced spatial-segregation limit for two-species exclusion processes]{Spatial-segregation limit for exclusion processes with two components under unbalanced reaction}
\author[K. Hayashi]{Kohei Hayashi}
\address{Graduate School of Mathematical Sciences, The University of Tokyo, Komaba, Tokyo 153-8914, Japan.}
\email{kohei@ms.u-tokyo.ac.jp}
\keywords{Hydrodynamics limit, Interacting particle system}
\subjclass[2010]{60H15}%secondary 60K35?, 82B44?}
\maketitle

\newtheorem{definition}{Definition}[section]
\newtheorem{theorem}{Theorem}[section] 
\newtheorem{lemma}[theorem]{Lemma}
\newtheorem{proposition}[theorem]{Proposition}
\newtheorem{remark}{Remark}[section]
\newtheorem{assumption}{Assumption}[section]

\makeatletter
\renewcommand{\theequation}{%
\thesection.\arabic{equation}}
\@addtoreset{equation}{section}
\makeatother

%%%%%%%%%%%%%%%%%%%%%%%%%%%%%%%%%%%%%%%%%%
%   Define Roman numbers here (I, II, III, IV, etc)
%%%%%%%%%%%%%%%%%%%%%%%%%%%%%%%%%%%%%%%%%
\newcounter{num}
\newcommand{\Rnum}[1]{\setcounter{num}{#1}\Roman{num}}

\begin{abstract}
We consider exclusion processes with two types of particles which compete strongly with each other. In particular, we focus on the case where one species does not diffuse at all and killing rates of two species are given by monomials with distinct exponents. We study limiting behavior of interfaces which appear by such a strong competition. Consequently, three kinds of limiting behavior of interfaces (vanishing, moving and immovable interfaces as in \cite{IMMN17}) are derived directly from our interacting particle system taking advantage of hydrodynamic limit procedure with singular limit for annihilation dynamics. 
\end{abstract}

%%%%%%%%%%%%%%%%%%%%%%%%%%%%%%%%%%%%%%%%%%%%%%%%%%%
%                                  Introduction                                    % 
%%%%%%%%%%%%%%%%%%%%%%%%%%%%%%%%%%%%%%%%%%%%%%%%%%%
\section{Introduction}
\label{sec:intro}
Spatial-segregation limit (or fast-reaction limit in some literatures) is a problem that discusses the limiting behavior of the solution of competitive reaction-diffusion system when its competition rate tends to infinity. The problem has been studied in PDE theory, which is concretely described as follows. Let $\Omega$ be a bounded domain in $\mathbb{R}^d$ with smooth boundary and let $u_K$ and $v_K$ be a pair of non-negative solution of competition-diffusion system
\[
\begin{cases}
\begin{aligned}
\partial_t u = d_1 \Delta u - K c_1 (u,v) u v  \\
\partial_t v = d_2 \Delta v - K c_2(u,v) u v 
\end{aligned}
\end{cases}
\]
in $[0,\infty) \times \Omega$ and study the limiting behavior of $u_K$ and $v_K$ as $K$ tends to infinity under some boundary condition for each case. Here $d_1$ and $d_2 $ are non-negative constants (diffusion coefficients) and $c_1$ and $c_2$ are non-negative functions on $\mathbb{R}_+^2$. When one of $d_1$ and $d_2$ is zero we call this \textit{one-phase} case, while the case when $d_1$ and $d_2 $ are both strictly positive is called \textit{two-phase} case as our convention. Moreover, we call a pair of reaction rates $c_1$ and $c_2$ is \textit{balanced} if there exists a positive constant $\kappa$ such that $c_1 = \kappa c_2 $ and otherwise we call it \textit{unbalanced}. Here we remark that $c_1$ and $c_2$ physically denote annihilation (or killing) rates since we only consider the case when $c_1$ and $c_2$ are non-negative.

An early study for spatial-segregation limit in PDE theory is found in \cite{HHP96} for one-phase case with balanced killing rates when the spatial dimension is one. For two-phase case with balanced reaction rates, \cite{DHMP99} considered in the Neumann boundary condition and \cite{CDHMN04} in inhomogeneous Dirichlet boundary can be found.

Recently spatial-segregation limit problem described as above has been studied from a microscopic viewpoint in \cite{DMFPV19}. They consider an exclusion process with two components where two types of particles diffuse with different constant rates and they strongly compete with each other. When a type-$1$ particle and a type-$2$ particle stay at the same site, they annihilate simultaneously with rate $K( N )$ which depends on the scaling parameter $N$ and diverges as $N$ tends to infinity. Then they proved that taking hydrodynamic limit procedure for this process limiting behavior of macroscopic density of each type of particles is determined by a two-phase Stefan problem as derived for example in \cite{DHMP99} in PDE context. In other words, they derived the time-evolution of limiting interface which asymptotically appears as competition rate tends to infinity directly from an interacting particle system. Namely, the spatial-segregation limit problem can be understood from a microscopic point of view in a special case where two species compete with common and simple competition rates. One natural problem is to consider the case when the competition rates are divergent but essentially different between two distinct species. 

When reaction rates $c_1$ and $c_2$ are unbalanced, for instance when one type of particles has strong effect of competence while that of the other kind of particles is comparatively weak, limiting behavior of the solution of the corresponding reaction-diffusion system as $K$ tends to infinity is far from well understood in PDE context but there are a few results considering this unbalanced case. In \cite{IMMN17}, they study the case $c_1$ and $c_2$ are monomials with different exponents. They consider a reaction-diffusion system
\begin{equation}
\label{IMMN17 eq}
\begin{cases}
\begin{aligned}
&\partial_t u = \Delta u - K u^{m_1} v^{m_2} \\
&\partial_t v =               - K u^{m_3} v^{m_4}
\end{aligned}
\end{cases}
\end{equation}
with Neumann boundary condition focusing on following four cases: 
\[
\begin{aligned}
\text{Case \Rnum{1} } : \quad & m_1 >3, m_2 = m_3 = m_4 = 1, \\
\text{Case \Rnum{2} } : \quad & m_2 \geq 1, m_1 = m_3 = m_4 = 1, \\
\text{Case \Rnum{3} } : \quad & m_3 >1, m_1 = m_2 = m_4 = 1, \\
\text{Case \Rnum{4} } : \quad & 1 \leq m_4 < 2, m_1 = m_2 = m_3 = 1 .\\ 
\end{aligned}
\]
Then, they proved that there are three kinds of limiting behavior of the asymptotic interfaces: vanishing, moving and immovable interfaces. For Case \Rnum{1}, $v_K (t, \cdot) $ converges to $0$ for every $t > 0$ and $u_K$ converges to a solution of the heat equation on the whole domain $\Omega$ as $K$ tends to infinity. Thus the liming interface disappears instantaneously in this case. Though some intuitive arguments for corresponding dynamical system (see \cite{IMMN17} for detail) support a conjecture which says the interface vanishes also when $2 < m_1 \leq 3$ but this is not proved because of some technical reasons and we only consider the case for $m_1 > 3$ also in this article. For Case \Rnum{2} and \Rnum{4}, a transformation of $v_K$ enables us to get another competition-diffusion system with common reaction rates up to constant. This case is already studied in \cite{HHP96} or \cite{HHP00} and one gets the limiting interface governed by a one-phase Stefan problem. For Case \Rnum{3}, a limiting interface appears but it does not move at all. In this case, $u$ evolves according to the heat equation on the fixed domain with Dirichlet boundary condition and $v$ does not change it values in its domain (this domain is the complement of the domain where the time evolution of $u$ takes place). Our aim is to understand this result by conducting the scaling limit of interacting particle systems where the exponents of reaction terms are restricted to be positive integer. For this reason, though \cite{IMMN17} also studies Case\Rnum{4} (moving interface) with its exponent $1 \leq m_4 < 2$ in \cite{IMMN17}, this can not be derived from our interacting particle system except for the trivial case $m_4 =1$ (this case is contained in Case\Rnum{2}).

In this paper, we extend the microscopic model in \cite{DMFPV19} to consider a fast-reaction limit problem for Glauber-Kawasaki dynamics with unbalanced reaction rates. In \cite{DMFPV19}, they considered the case when $d_1,d_2 > 0$ and $c_1 \equiv c_2$ and derived a two-phase Stefan problem as a system of hydrodynamic limit equations. On the other hand, we in this paper consider the case when reaction rates are unbalanced (namely $c_1\neq \kappa c_2$ for any $\kappa$). For the first step to treat hydrodynamic limit problem with unbalanced reaction rates, we consider the case in \cite{IMMN17} where the corresponding PDE problem is discussed. Concretely, we consider a simple exclusion process with annihilation dynamics where annihilation rates of two species are given by monomials and further assume one type of particles does not diffuse at all (namely we assume $d_2= 0$ as \cite{IMMN17}). After some careful calculations, we can show that three types of limiting interfaces as in \cite{IMMN17} are derived through the hydrodynamic limit procedure for this process. One reason for considering the case $d_2 = 0$ is that it makes the problem technically simple to prove the PDE part, though this lack of Kawasaki dynamics for type-$2$ particles makes the probabilistic part more difficult. For Case \Rnum{1}, the assumption $d_2 = 0$ makes the second equation of (\ref{IMMN17 eq}) an ODE for $v$ provided $u$ is considered to be a given function, which can be solved explicitly. Therefore, the reaction-diffusion system (\ref{IMMN17 eq}) can be reviewed as a single equation so that the comparison principle becomes applicable. This plays a technically essential role in Case\Rnum{1}. On the other hand, for Case\Rnum{3}, $d_2 = 0$ is an essential assumption for the immovable interface to be deduced. In Case\Rnum{3}, if type-$2$ particles diffuse (namely $d_2$ is positive), then the system becomes equivalent to Case\Rnum{2} by ignoring magnitude of diffusion coefficients and thus the limiting interface does move as a solution of a one-phase Stefan problem. Hence the condition $d_2 = 0$ is not only a technical but also a phenomenologically essential assumption which makes limiting behavior of interfaces rich in variety.

Here we summarize what we prove as main theorems (Theorems \ref{main  thm1}, \ref{main thm2} and \ref{main thm3}) in this paper. Our main theorems state that three kinds of limiting behavior of asymptotic interfaces considered in \cite{IMMN17} is derived directly from interacting particle systems corresponding to each cases through the hydrodynamic limit procedure. The proof of the main theorems is divided into two parts: the probabilistic part and the PDE part. In the first part of this paper, we prove the probabilistic part of the hydrodynamic limit theorems by means of the relative entropy method introduced by H.-T. Yau in \cite{Yau91}. In that machinery, one needs to show that the probability law of an undergoing process and another probability measure parametrized by macroscopic quantities which are determined by some partial differential equation(s) are sufficiently close in terms of relative entropy. In our cases, we take a reference measure whose weight parameters satisfy a semi-discretized system (that is, discretized only for spatial variables so that the system becomes to be a system of ODEs) and study limiting behavior of this semi-discretized system as a deterministic problem after we proved the probabilistic part. We call the latter part PDE part and such a deterministic limiting procedure is conducted in semi-discretized settings. In other words, we treat the limit when both the scaling parameter and the reaction rate tends to infinity, while in \cite{IMMN17} they consider continuous reaction-diffusion systems and taking limit only for the reaction rate. In this paper, we first show the probabilistic part for our dynamics with general reaction terms, namely the case with $d_2 = 0$ and reaction rates $c_1$, $c_2$ are general, but $c_2$ is assumed to depend only on the configuration of type-$1$ particles for a technical reason, non-negative polynomials of configurations of each kind of particles. Then we study limiting behavior of solutions of the semi-discretized reaction-diffusion system only for specific reaction rates which macroscopically corresponds to the system of hydrodynamics limit equations (\ref{IMMN17 eq}). To conduct such a procedure, we actually have to put a restriction which ensures $c_2$ depends only on configuration of type-$1$ particles to prove particularly the probabilistic part, though it covers all cases considered in \cite{IMMN17}. We take a product Bernoulli measure for both types of particles as a reference measure which is parametrized by macroscopic densities of type-$1$ and $2$ particles. This is very natural for the dynamics for type-$1$ particles since product Bernoulli measure is stationary for Kawasaki dynamics and indeed Bernoulli measures parametrized by a spatially constant densities are a family of invariant measures for Kawasaki dynamics. For the Glauber-Kawasaki dynamics with single component where creation and annihilation rule is added, product Bernoulli measure with dynamical parameters which governed by a macroscopic equation (hydrodynamic limit equation) is known to be appropriate as reference measure in Yau's relative entropy method.

Finally we explain how this paper is organized. First, in Section \ref{sec:model} we give a precise description of our model and state main results and then we give the proof of main theorems in the forthcoming sections (Section \ref{sec:strategy} - \ref{sec:case3}). The proof is divided in two steps: probabilistic part and PDE part. In Section \ref{sec:strategy} we explain these steps which are needed to prove the main theorems in detail. Section \ref{sec:prob} is devoted to the proof of probabilistic part where we prove that the probability law of spatial density profiles of our microscopic dynamics is close to a reference measure which is dynamically parametrized by a solution of a semi-discretized reaction diffusion system. It should be noted here again that we use product Bernoulli measure as reference measure in our proof. In the last four sections (Section \ref{sec:PDE} - \ref{sec:case3}), we study limiting behavior of the semi-discretized system and show there are three regimes which derive three kinds of limiting behavior of interfaces: vanishing regime, moving regime and immovable regime. In Section \ref{sec:PDE}, we give a priori estimates involving the semi-discretized reaction-diffusion system. In Section \ref{case1}, we consider the case when the interface vanishes instantaneously (Case\Rnum{1} in \cite{IMMN17}). In this case, type-$2$ particles extincts and the density of type-$1$ particles evolves according to the heat equation on the whole domain at any positive time. In Section \ref{sec:case2}, we treat the case when reaction terms become the same up to constant through a change of variables (Case\Rnum{2} in \cite{IMMN17}). Then the limiting interface moves, which is governed by a one-phase Stefan problem. In Section \ref{sec:case3}, we investigate the case when the interface appears but it does not move at all (Case\Rnum{3} in \cite{IMMN17}).

\begin{remark}
Throughout this article, we use \textit{Proposition} to state results which are already known in other literatures.  
\end{remark}

%%%%%%%%%%%%%%%%%%%%%%%%%%%%%%%%%%%%%%%%%%%%%%%%%%%%
%   Our model and results
%%%%%%%%%%%%%%%%%%%%%%%%%%%%%%%%%%%%%%%%%%%%%%%%%%%%
\section{Our model and results}
\label{sec:model}
\subsection{Microscopic model}
Let $\mathbb{T}^d_N \cong \{1,..., N \}^d$ be the $d$-dimensional discrete torus and let $\mathcal{X}^2_N = \{ 0,1 \}^{\mathbb{T}^d_N} \times \{ 0,1 \}^{\mathbb{T}^d_N}$ be the configuration space of two kinds of particles. We denote an element $\eta= (\eta_1, \eta_2) \in \mathcal{X}^2_N$ with $\eta_i = \{ \eta_i (x) \}_{x \in \mathbb{T}^d_N}$ $(i=1,2) $. Here for each $i = 1,2$, an element $\eta_i$ represents the configuration of type-$i$ particles: $\eta_i(x)=1$ means there exists a type-$i$ particle on site $x \in \mathbb{T}^d_N$ and $\eta_i(x)=0$ means type-$i$ particle does not exist on site $x$. Next, we consider a kind of Glauber-Kawasaki processes which takes values on $\mathcal{X}^2_N$ as follows. We consider the simple exclusion process where only type-$1$ particles diffuse and its generator is given by
\[
L_K f(\eta_1, \eta_2) =
 \frac{1}{2} \sum_{ x,y \in \mathbb{T}^d_N , |x-y|=1 } \left [ f (\eta_1^{x,y}, \eta_2) - f(\eta_1, \eta_2) \right ]
\]
for  each function $f: \mathcal{X}^2_N \to \mathbb{R}$. Here, for each $\sigma \in \mathcal{X}_N$, $\sigma^{x,y}$ is the configuration after exchanging occupation variables on sites $x$ and $y$: 
\[
\sigma^{x,y}(z)= 
\begin{cases}
\sigma(x) \quad \text{if }  z=y, \\
\sigma(y) \quad \text{if } z=x, \\
\sigma(z) \quad \text{otherwise}. \\ 
\end{cases}
\]
On the other hand, the generator of the Glauber dynamics is given by $L_G = L_{1,G }+ L_{2, G}$ with 
\begin{align*}
L_{1, G} f (\eta_1, \eta_2) = 
 \sum_{x \in \mathbb{T}^d_N} c_{1,x} (\eta_1, \eta_2) \eta_1(x) \eta_2(x) \left[ f(\eta_1^x, \eta_2) -f(\eta_1, \eta_2) \right]  
\\
L_{2, G} f (\eta_1, \eta_2) = 
\sum_{x \in \mathbb{T}^d_N} c_{2,x} (\eta_1, \eta_2) \eta_1(x) \eta_2(x) \left[ f(\eta_1, \eta_2^x) -f(\eta_1, \eta_2) \right]
\end{align*}
for each function $f: \mathcal{X}^2_N \to \mathbb{R}$ where for each $\sigma \in \mathcal{X}_N$, $\sigma^x$ is the configuration after flipping the particle configuration on site $x$: 
\[
\sigma^x (z)= 
\begin{cases}
1-\sigma(x)  &\quad \text{if }  z=x, \\
\sigma(z) &\quad \text{if } z \neq x. \\
\end{cases}
\]
Here, $c_{i,x} (\eta_1, \eta_2)=c_i (\tau_x \eta_1, \tau_x \eta_2) $ for $i=1,2$ and $\{ \tau_x \}_{x \in \mathbb{Z}^d}$ are shifts acting on $\mathcal{X}_N$ as $\tau_x \sigma(\cdot) = \sigma (\cdot + x)$ for every $\sigma \in \mathcal{X}_N$. Throughout this article, we assume that both annihilation rates $c_i (\eta_1, \eta_2)(i=1,2)$ are non-negative and depend only on the particle configuration of finite number of sites which depend neither on $\eta_1(0)$ nor $\eta_2 (0) $ as follows:
\[
c_1 (\eta_1, \eta_2) = 
\sum_{\substack{ \Lambda_{ 1}, \Lambda_{ 2} \Subset \mathbb{Z}^d , 0 \not \in \Lambda_{ 1} \cup \Lambda_{ 2}   } } 
c_{1, \Lambda_{ 1}, \Lambda_{ 2} } \prod_{x \in \Lambda_{ 1} } \eta_1(x) \prod_{x \in \Lambda_{ 2} } \eta_2(x) 
\]
and 
\[
c_2 (\eta_1, \eta_2) \equiv c_2 (\eta_1) = 
\sum_{ \substack{ \Lambda \Subset \mathbb{Z}^d,  0 \not\in \Lambda  } } c_{2, \Lambda } \prod_{x \in \Lambda } \eta_1(x)  
\]
with some real constants $c_{1 ,\Lambda_1, \Lambda_2 }$ and $c_{ 2 , \Lambda } $ such that $c_1 $ and $c_2 $ stay non-negative and these constants are assumed to be zero except for finite numbers of them so that the above summation becomes finite. In this paper, we assume the reaction rate $c_2$ does not depend on configuration of type-$2$ particles. A technical reason for this assumption will be explained in the proof of the probabilistic part of main theorems given in Section 3 (see Remark \ref{c2 reason}). Moreover, for the sake of convenience described later we extend $c_1$ and $c_2$ as non-negative functionals on $ [0,1]^{\mathbb{T}^d_N} \times [0, 1] ^{\mathbb{T}^d_N} $ by
\[
c_1 (u, v) = 
\sum_{\substack{ \Lambda_{ 1}, \Lambda_{ 2} \Subset \mathbb{Z}^d \\ 0 \not \in \Lambda_{ 1} \cup \Lambda_{ 2}  } } 
c_{1 ,\Lambda_{ 1}, \Lambda_{ 2} } \prod_{x \in \Lambda_{ 1} } u(x) \prod_{x \in \Lambda_{ 2} } v(x) 
\]
and 
\[
c_2 (u,  v) \equiv c_2 ( u ) = 
\sum_{\Lambda \Subset \mathbb{Z}^d,  0 \not \in \Lambda  } c_{2 , \Lambda } \prod_{x \in \Lambda } u(x)  
\]
for $(u, v) \in [0,1]^{\mathbb{T}^d_N} \times [0, 1] ^{\mathbb{T}^d_N}$ with $u = \{u (x) \}_{x \in \mathbb{T}^d_N}$ and $v = \{v (x) \}_{x \in \mathbb{T}^d_N}$. In other words, $c_i ( u, v )$ is obtained by substituting $u (x)$ and $v (x)$ into $\eta_1(x)$ and $\eta_2 (x)$, respectively, in the definition of $c_1 (\eta_1, \eta_2)$ and $c_2 (\eta_1, \eta_2)$ for every $x \in \mathbb{T}^d_N$. 

For each $N \in \mathbb{N}$, let $\{ \eta^N_t = (\eta^N_{1,t}, \eta^N_{2,t}) \}_{t \geq 0}$ be the $\mathcal{X}^2_N$-valued Markov process generated by $L_N = N^2 L_K + K(N) L_G$ on some probability space $(\Omega_N, \mathcal{F}_N, \mathbb{P}^N_{\mu^N_0} )$. Here for a probability measure $\mu $ on the configuration space $\mathcal{X}^2_N$,  $\mathbb{P}^N_\mu $ is the probability measure under which the initial distribution of $\{ \eta^N_t \}_{t \geq 0 }$ is $\mu$ and we denote the expectation with respect to $\mathbb{P}^N_\mu$ by $\mathbb{E}^N_\mu[\cdot]$. An assumption for the initial distribution $\mu^N_0 $ will be described later (see assumption (A2)). Here $K=K(N)$ is a divergent parameter as $N$ tends to infinity, which corresponds to take so-called ``fast-reaction limit'' in PDE context. Define macroscopic empirical measures $\{ \pi^N_t = ( \pi^N_{1,t}, \pi^N_{2,t} ) \}_{t \geq 0}$ on the $d$-dimensional torus $\mathbb{T}^d \cong [0, 1 )^d$ by
\[
\pi^N_{i,t} (d\theta) := \frac{1}{N^d} \sum_{x \in \mathbb{T}^d_N} \eta^N_{i,t}(x) \delta_{ \frac{x}{N} } (d\theta), \quad i=1,2
\]
and hereafter we write $\langle \pi^N_{i,t} , \varphi \rangle:=\int_{\mathbb{T}^d} \varphi(\theta) \pi^N_{i,t} (d\theta)$ for any continuous function $\varphi$ on $\mathbb{T}^d$. Moreover, for any $\mathbb{R}^2$-valued continuous function $\varphi = (\varphi_1, \varphi_2)$, we denote its vector-valued integral by $\langle \pi^N_t , \varphi \rangle \coloneqq ( \langle \pi^N_{1,t} , \varphi_1 \rangle, \langle \pi^N_{2,t} , \varphi_2 \rangle ) $.

Our aim is to study the limiting behavior of spatial density profiles of both kinds of particles under dynamics such that diffusion of type-$1$ particles is speeded up by $N^2$ and two species compete with rate $K (N)$ which diverges as $N$ tends to infinity. Particularly, we will show that for special forms of reaction rates $c_i$ $(i=1,2)$ (that is, \textbf{Case 1}, \textbf{Case 2} and \textbf{Case 3} which are described later) there are three regimes of interface growth.

\subsection{Hydrodynamic limit}

To evaluate the difference between two probability measures, we use the relative entropy defined as follows. Let $\mu$ and $\nu$ be two probability measures on $\mathcal{X}^2_N$. We define the relative entropy of $\mu$ with respect to $\nu$ by
\begin{equation}
\label{rel ent}
H(\mu | \nu) \coloneqq \int_{\mathcal{X}^2_N} \frac{d\mu}{d\nu} \log{\frac{d\mu}{d\nu}} d\nu
\end{equation}
if $\mu$ is absolutely continuous with respect to $\nu$, while otherwise we define $H(\mu | \nu) := \infty$.

Next we summarize our assumptions on the initial distribution and state main theorems in this paper. 

\begin{description}
\item[\textmd{(A1)}] 
Let $u^N (0,x) = u^N_0 (x) $ and $v^N (0,x) = v^N_0 (x) $ be given and satisfy two bounds 
\[
\begin{aligned}
& e^{- C_1 K} \leq u^N (0, x)\leq C_2, \quad | \nabla^N u^N (0, x) | \leq C_0 K \\
& e^{- C_1 K} \leq v^N (0, x)\leq C_2, \quad | \nabla^N v^N (0, x) | \leq C_0 K 
\end{aligned}
\]
for every $x \in \mathbb{T}^d_N$ with $C_1 > 0$, $0 < C_2 < 1$ and $C_0 > 0$. 
Here $ \nabla^N $ is the discrete gradient, that is, for every $ u : \mathbb{T}^d_N \to \mathbb{R}$ we define $ \nabla^N u : \mathbb{T}^d_N \to \mathbb{R}^d$ by  $ \nabla^N u(x) = \left( N ( u(x + e_j ) - u (x) ) \right)_{j= 1,...,d} $. 
\item[\textmd{(A2)}]
We denote $\mu^N_0$ the distribution of $\eta^N_0 = (\eta^N_{1, 0} , \eta^N_{2, 0} )$ on $\mathcal{X}^2_N$ and let $\nu^N_0 $ be the product Bernoulli measure on $\mathcal{X}^2_N$ with mean $(u^N (0,\cdot), v^N (0, \cdot))$. We assume the relative entropy $H(\mu^N_0 | \nu^N_0 )$ defined by (\ref{rel ent}) satisfies $H ( \mu^N_0 | \nu^N_0 ) = O (N^{d-{\delta_0}})$ for some $\delta_0 >0$ as $N$ tends to infinity, that is, there exists a positive constant $M$ such that $H ( \mu^N_0 | \nu^N_0 ) \leq M N^{d-{\delta_0}}$ for sufficiently large $N$. 
\item[\textmd{(A3)$_\delta$}]
$K = K (N)$ satisfies $1 \leq K(N ) \leq \delta ( \log N )^{1/2} $ and $K (N) \to \infty$ as $N$ tends to infinity. 
\end{description}

For each $m\in \mathbb{N}$ and for some fixed sites $z_i \in \mathbb{Z}^d$ ($i=1,...,m-1$), we introduce the following three regimes which are special cases in our setting. 
\begin{description}
\item[Case 1.] $c_1(\eta_1,\eta_2)=\eta_1(z_1)\cdots \eta_1(z_{m-1})$, $c_2(\eta_1,\eta_2) \equiv 1$ with $m >3$. 

\item[Case 2.] $c_1(\eta_1,\eta_2)=\eta_2(z_1)\cdots \eta_2(z_{m-1})$, $c_2(\eta_1,\eta_2) \equiv 1$ with $m \ge 1$.

\item[Case 3.] $c_1(\eta_1,\eta_2) \equiv 1$, $c_2(\eta_1,\eta_2)=\eta_1(z_1)\cdots \eta_1(z_{m-1})$ with $m > 1$.
\end{description}
Here we suppose $c_1 \equiv 1$ when $m=1$ in \textbf{Case 2} and $c_2 \equiv 1$ when $m=1$ in \textbf{Case 3} by convention. For each regime, we have the following hydrodynamic limit result which describes the limiting behavior of interfaces between two particle territories.

In addition to the above assumptions (A1), (A2) and (A3) imposed for all three cases, we further introduce conditions (B1), (B2) and (B3) which are assumed for \textbf{Case 1}, \textbf{Case 2} and \textbf{Case 3}, respectively.

\begin{description}
\item[\textmd{(B1)}] 
There exist non-negative functions $u_0 \in C^4 (\mathbb{T}^d)$ and $v_0 \in C^\alpha (\mathbb{T}^d)$ for some $\alpha \in (0, 1) $ such that $u_0 \not\equiv 0 $, $u_0 v_0 \equiv 0$ and $v^N (0, x) = o (1/ K) $ for every $x \in \mathbb{T}^d_N$ satisfying $x / N \notin \text{supp} v_0 $ as $N $ tends to infinity, that is, we have \\
$\lim_{ N \to \infty } \sup_{ x/N \notin \text{supp} v_0 } K v^N (0, x) = 0$. Moreover, we assume 
\[
\lim_{ N \to \infty } \sup_{ x \in \mathbb{T}^d_N } K^2 | u^N (0, x) - u_0 (x/N) |  =0 , 
\quad \lim_{ N \to \infty} \sup_{ x \in \mathbb{T}^d_N } | v^N (0, x) - v_0 (x/ N) | = 0.
\]
\item[\textmd{(B2)}] 
There exist functions $u_0, v_0 \in L^2 (\mathbb{T}^d)$ such that $u_0 v_0 \equiv 0$ and functions $u^N ( 0 , \cdot)$ and $v^N ( 0 , \cdot)$ on $\mathbb{T}^d$ defined by (\ref{dsol}) satisfy for the index $m \ge 1 $ appearing in \textbf{Case 2} 
\[
u^N (0, \cdot) \rightharpoonup u_0, \, v^N (0, \cdot)^m \rightharpoonup v_0^m \text{ weakly in  } L^2 (\mathbb{T}^d)
\]
as $N$ tends to infinity. 
\item[\textmd{(B3)}] 
There exist functions $u_0 \in C (\mathbb{T}^d)$, $v_0 \in L^\infty (\mathbb{T}^d)$ and a positive constant $m_v$ satisfying $u_0,v_0 \not\equiv 0$, $u_0  v_0 \equiv 0$ and $v_0 \ge m_v$ in $\text{supp} (v_0) $ such that $u^N ( 0 , \cdot ) $ and $ v^N (0, \cdot ) $ on $\mathbb{T}^d$ defined by (\ref{dsol}) converge almost everywhere to $u_0$, $v_0$, respectively, as $N$ tends to infinity.
\end{description}

To state the main theorems in this article, we introduce the following notation. For functions $u$ and $v$ on $Q_T \coloneqq [0,T] \times \mathbb{T}^d$ such that $uv \equiv 0$ a.e. in $Q_T$, we define
\begin{equation}
\begin{aligned}
&\Omega^u(t) \coloneqq \{ \theta \in \mathbb{T}^d  |  u(t,\theta) >0 \},  \quad 
\Omega^v(t) \coloneqq \{ \theta \in \mathbb{T}^d  |  v(t,\theta) >0 \},  \\
&Q^u_T \coloneqq \bigcup_{0 \leq t \leq T} \{ t \} \times \Omega^u(t) , \quad 
Q^v_T \coloneqq \bigcup_{0 \leq t \leq T} \{ t \} \times \Omega^v(t) , \\
&\Gamma(t)  \coloneqq \Omega \backslash (\Omega^u(t) \cup \Omega^v(t)) , \quad 
\Gamma  \coloneqq  \bigcup_{0 \leq t \leq T}\{ t \} \times  \Gamma(t).
\end{aligned}
\end{equation}

First, when reaction rates are of \textbf{Case 1}, we can show that type-$1$ particles fill up the whole space and the limiting interface vanishes in an instant.

\begin{theorem}[Vanishing interface]
\label{main thm1}
Assume reaction rates $c_1$ and $c_2$ are of \textbf{Case 1}. Assume (A1), (A2), (A3)$_\delta$ and (B1) for some sufficiently small $\delta > 0$. Then for every $\varepsilon > 0$ and $\varphi = ( \varphi_1, \varphi_2 ) \in C^\infty( Q_T ; \mathbb{R}^2 )$ we have 
\[
\lim_{N\to \infty} \mathbb{P}^N_{\mu^N_0} \left( \left|\int_0^T \big( \langle\pi^N_t, \varphi(t, \cdot) \rangle - \langle ( u (t,\cdot) , 0 ) ,  \varphi (t, \cdot ) \rangle_{L^2(\mathbb{T}^d; \mathbb{R}^2 )} \big) dt \right|  > \varepsilon  \right) =0
\]
where the function $u$ is a classical solution of the heat equation on the whole domain with periodic boundary condition: 
\begin{equation}
\label{heat eq}
\begin{cases}
\partial_t u = \Delta u \\
u(0,\cdot)= u_0 (\cdot).
\end{cases}
\end{equation}
\end{theorem}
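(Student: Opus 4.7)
The plan is to follow the two-step strategy outlined in the introduction: a probabilistic relative entropy estimate that reduces the stochastic question to a deterministic one, followed by an asymptotic analysis of a semi-discretized reaction-diffusion system. I define the reference profiles $(u^N(t,\cdot), v^N(t,\cdot))$ on $\mathbb{T}^d_N$ as the solution of the coupled ODE system
\[
\partial_t u^N(x) = \Delta^N u^N(x) - K\,c_1(u^N,v^N)(x)\,u^N(x)\,v^N(x),
\qquad
\partial_t v^N(x) = -K\,c_2(u^N,v^N)(x)\,u^N(x)\,v^N(x),
\]
with initial data from (A1), and let $\nu^N_t$ be the product Bernoulli measure on $\mathcal{X}^2_N$ with these site-wise means. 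The probabilistic estimate to be proved in Section \ref{sec:prob} delivers $H(\mu^N_t\,|\,\nu^N_t) = O(N^{d-\delta'_0})$ uniformly on $[0,T]$ for some $\delta'_0>0$. By the entropy inequality together with the standard exponential Chebyshev bound for bounded local observables, this estimate replaces $\pi^N_t$ by the deterministic pair $(u^N,v^N)$ in the event of interest; Theorem \ref{main  thm1} then reduces to showing that, for smooth $\varphi_1,\varphi_2$ and in the $L^1([0,T])$ sense,
\[
\frac{1}{N^d}\sum_{x\in\mathbb{T}^d_N}u^N(t,x)\,\varphi_1(t,x/N)\longrightarrow\int_{\mathbb{T}^d}u(t,\theta)\,\varphi_1(t,\theta)\,d\theta,
\qquad
\frac{1}{N^d}\sum_{x\in\mathbb{T}^d_N}v^N(t,x)\,\varphi_2(t,x/N)\longrightarrow 0,
\]
where $u$ solves (\ref{heat eq}).

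For Case 1 the analysis simplifies because $c_2\equiv 1$ and $c_1(u^N,v^N)(x)=\prod_{i=1}^{m-1}u^N(x+z_i)$: the $v^N$-equation is a pure site-wise ODE that integrates explicitly to
\[
v^N(t,x) = v^N_0(x)\exp\!\Bigl(-K\!\int_0^t u^N(s,x)\,ds\Bigr).
\]
The task reduces to obtaining a uniform positive lower bound $u^N(t,x)\ge\underline c(t)>0$ for $t>0$. Since the $u^N$-equation has non-positive reaction, treating $V^N(s,x):=K\,c_1(u^N,v^N)(s,x)\,v^N(s,x)\ge 0$ as a Schr\"odinger-type potential, the discrete Feynman--Kac formula yields
\[
u^N(t,x)\;\ge\;\exp\!\Bigl(-\!\int_0^t\|V^N(s,\cdot)\|_\infty\,ds\Bigr)\,\bigl(e^{t\Delta^N}u^N_0\bigr)(x).
\]
The heat-kernel factor is bounded below uniformly in $N$ and $x$ by the smoothness of $u_0\not\equiv 0$ in (B1), while the reaction-loss factor is controlled by feeding the tentative lower bound back into the explicit formula for $v^N$: the identity $K\!\int_0^t v^N(s,x)\,ds\le v^N_0(x)/\underline c$ then implies that the total loss stays $O(1)$ provided the extra factor $u^{N,m-1}$ in $V^N$ can itself be controlled.

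The main obstacle is precisely this circular dependence between the lower bound on $u^N$ and the decay of $v^N$: the potential $V^N$ contains $u^{N,m-1}v^N$, whose smallness is exactly what one is trying to establish. The exponent $m>3$ of Case 1 is the structural hypothesis that makes this feedback subcritical; together with the growth restriction $K\le\delta(\log N)^{1/2}$ from (A3)$_\delta$ for sufficiently small $\delta$ and the initial compatibility in (B1), it permits a barrier/comparison argument in the semi-discretized setting that parallels the PDE analysis of \cite{IMMN17} and keeps $\int_0^T Ku^{N,m-1}v^N\,dt$ uniformly bounded. Once the lower bound on $u^N$ is in place, the explicit formula for $v^N$, the divergence $K\to\infty$, and the boundary decay $Kv^N_0\to 0$ outside $\mathrm{supp}\,v_0$ from (B1) together force $v^N(t,\cdot)\to 0$ uniformly on $\mathbb{T}^d_N$ for each $t>0$. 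The reaction term in the $u^N$-equation then becomes negligible in $L^1([\tau,T]\times\mathbb{T}^d_N)$ for any $\tau>0$, and standard energy/compactness arguments for the semi-discrete heat equation identify the limit as the unique classical solution $u$ of (\ref{heat eq}), completing the reduction.
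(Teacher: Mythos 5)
Your overall architecture matches the paper: Yau's relative entropy estimate reduces the theorem to convergence of the semi-discretized profiles, the $v^N$-equation is integrated explicitly, and everything hinges on a positive lower bound for $u^N$ at positive times. However, at exactly the point you label ``the main obstacle'' there is a genuine gap, not just an omitted routine step. Your Feynman--Kac bound $u^N(t,x)\ge \exp\bigl(-\int_0^t\|V^N(s,\cdot)\|_\infty ds\bigr)\bigl(e^{t\Delta^N}u^N_0\bigr)(x)$ with $V^N=K\,u^N(\cdot+z_1)\cdots u^N(\cdot+z_{m-1})v^N$ cannot close the argument: on the region where $v^N$ has not yet decayed the potential is of size up to order $K$, so $\int_0^t\|V^N(s,\cdot)\|_\infty ds$ is not $O(1)$ uniformly in $N$, and a global sup-norm factor degrades the whole profile by the worst site rather than locally. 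The assertion that $m>3$ ``makes the feedback subcritical'' and ``permits a barrier/comparison argument paralleling \cite{IMMN17}'' is precisely the content that has to be supplied, and it is nontrivial in the semi-discrete setting: the paper constructs the sub-solution $\underline{u}^N_\delta$ solving $\partial_t\underline{u}^N_\delta=\Delta^N\underline{u}^N_\delta-\delta\underline{u}^N_\delta$ with the specific choice $\delta>e^{-1}$, proves a small-time lemma (Lemma \ref{small time}) where $m>3$ enters through the sign of $[(m-1)\underline{u}_\delta^{m-3}\partial_t\underline{u}_\delta-1]\underline{u}_\delta$ on $\mathrm{supp}\,v_0$, and runs a first-touching-point contradiction argument (Lemma \ref{subsol bound}) in which the decomposition $I^N_0=I^N_1+I^N_2+I^N_3$, the elementary bound $z e^{-z}\le e^{-1}$ applied to $z_K=K\underline{u}_\delta^{m-1}$, the replacement of the inhomogeneous product by $u^N(x)^m$ at cost $O(K^2/N)$ (Lemma \ref{grad u}), and the $o_N(1/K^2)$ approximation of $\underline{u}_\delta$ by $\underline{u}^N_\delta$ (Proposition \ref{heat aprx}, requiring the $C^4$ regularity and the $K^2$-rate in (B1)) are all needed. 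None of these mechanisms appears in your outline, so the circularity you identify is acknowledged but not resolved.

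Two smaller remarks. First, once the lower bound is available, the paper does not use energy/compactness to identify the limit of $u^N$: it shows the reaction term satisfies $0\ge J^N\ge -CK^{-1/2}u^N$ after a time $t_K\searrow 0$ (via $s^2e^{-s}\le 4e^{-2}$), patches the sub-solution $\rho^N_-$ from $\underline{u}^N_{\delta_1}$ on $[0,t_K]$ and $\underline{u}^N_{\delta_2}$ with $\delta_2\sim K^{-1/2}$ afterwards, and squeezes $u^N$ between $\rho^N_-$ and the discrete caloric super-solution $\rho^N_+$, obtaining uniform convergence; your compactness route could in principle deliver the weak convergence that the main theorem needs, but it is still entirely contingent on the missing lower bound. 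Second, your claim that the probabilistic part yields $H(\mu^N_t|\nu^N_t)=O(N^{d-\delta_0'})$ is stronger than what the paper proves ($o(N^d)$ after Gronwall with the $e^{CK^2t}$ factor); the weaker statement suffices, but you should not rely on a quantitative rate unless you track the $e^{CK^2 t}$ loss against (A3)$_\delta$.
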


Figure \ref{macrocase1} shows an example of the evolution of limiting interface staring from some initial functions $u_0$ and $v_0$ for \textbf{Case 1}. For a typical pair of initial functions $(u_0, v_0)$, we can choose a semi-discretized initial functions $u^N_0 $ and $v^N_0$ satisfying the assumptions (A1) and (B1). For example, as shown in the left side of Figure \ref{macrocase1}, if $u_0$ and $v_0$ are smooth and bounded from above by $C_2$ appearing in the assumption (A1), then it suffices to take $u^N_0 ( x ) = \max \{ u_0 (x / N) , e^{- C_1 K } \} $ for every $x \in \mathbb{T}^d_N$. The initial function $v^N_0$ satisfying both assumptions (A1) and (B1) can be taken similarly.

\begin{figure}[h]
 \centering
 \includegraphics[width=14cm]{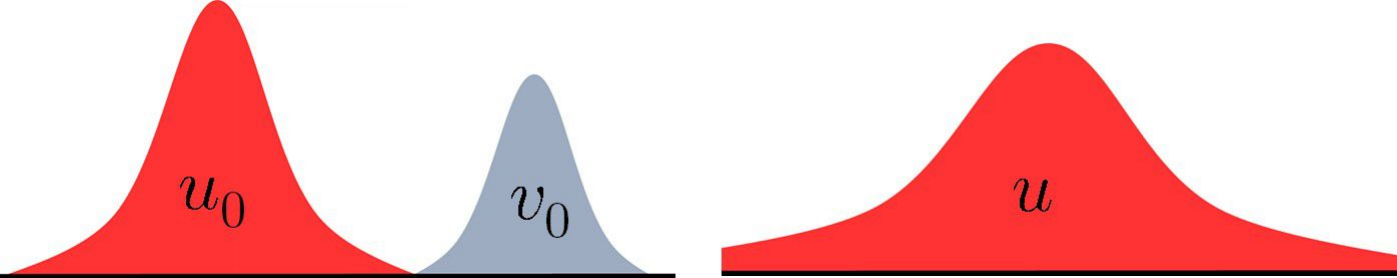}
 \caption{Interface behavior for \textbf{Case 1}. The left figure shows an example of initial functions $u_0$ and $v_0$, while the right one shows a typical situation at some positive time $t $. }
 \label{macrocase1}
\end{figure}

Next, when reaction rates are of \textbf{Case 2}, an interface between two territories appears and we can describe the motion of the interface by a one-phase Stefan problem with latent heat $v^m_0 |_{\Gamma(t)} /m$. Hereafter we define $w_+ \coloneqq \max \{  w , 0 \}$ and $w_- \coloneqq \max \{ -w, 0 \} $ for any $w$ which takes values in $\mathbb{R}$. 
To state the main result for \textbf{Case 2}, we introduce the notion of weak solution to a free boundary problem called the one-phase Stefan problem.

\begin{definition}
Let $w_0$ be a function in $L^\infty (\mathbb{T}^d)$. We call a function $w = w (t, \theta)$ on $Q_T = [0,T] \times \mathbb{T}^d$ a weak solution of the one-phase Stefan problem with initial function $w_0$ if 
\begin{description}
\item[\textmd{(1)}] $w \in L^\infty (Q_T), \, w_+ \in L^2 (0,T; H^1 (\mathbb{T}^d)) $. 
\item[\textmd{(2)}] For every $\varphi \in H^1 (Q_T) $ such that $\varphi (T, \cdot ) =0$, we have
\[
\int_0^T \int_{\mathbb{T}^d } ( w \partial_t \varphi - \nabla w_+ \cdot \nabla \varphi  ) d\theta dt = - \int_{\mathbb{T}^d} w_0  \varphi (0, \theta) d\theta .
\]
\end{description}
\end{definition}

\begin{theorem}[Moving interface]
\label{main thm2}
Assume reaction rates $c_1$ and $c_2$ are of \textbf{Case 2}. Assume (A1), (A2), (A3)$_\delta$ and (B2) with some $\delta > 0 $. Let $w $ be a unique weak solution of the one-phase Stefan problem with initial function $u_0 - v_0^m/m $ and let $u$ and $v$ be defined by $u = w_+ $ and $v^m = m w_-$ which satisfy $u (t, \theta ) v (t, \theta) = 0$ for every $\theta \in \mathbb{T}^d $. Then for every $\varepsilon > 0$ and $\varphi = ( \varphi_1 , \varphi_2 )  \in C^\infty( Q_T ; \mathbb{R}^2)$ we have
\[
\lim_{N\to \infty} \mathbb{P}^N_{\mu^N_0} \left( \left| \int_0^T \big( \langle\pi^N_t, \varphi (t, \cdot )  \rangle - \langle ( u (t,\cdot), v (t,\cdot) ),  \varphi (t, \cdot ) \rangle_{L^2(\mathbb{T}^d; \mathbb{R}^2 )} \big) dt  \right|    > \varepsilon  \right) =0.
\]
\end{theorem}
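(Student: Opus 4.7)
The plan is to combine the relative-entropy estimate for the microscopic dynamics with a deterministic analysis of a semi-discretized reaction-diffusion system, following the two-step strategy outlined in the introduction. Let $(u^N(t,x), v^N(t,x))_{t \ge 0,\,x \in \mathbb{T}^d_N}$ solve the semi-discretized system
\[
\partial_t u^N(t,x) = \Delta^N u^N(t,x) - K\,u^N(t,x)\,v^N(t,x)^m, \qquad \partial_t v^N(t,x) = -K\,u^N(t,x)\,v^N(t,x),
\]
with initial data prescribed by (A1) and (B2), and let $\nu^N_t$ denote the product Bernoulli reference measure on $\mathcal{X}^2_N$ with mean $(u^N(t,\cdot), v^N(t,\cdot))$. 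The argument splits into a probabilistic step, showing $H(\mu^N_t \mid \nu^N_t) = o(N^d)$ uniformly on $[0,T]$, and a PDE step that identifies the limit of the deterministic pair $(u^N, v^N)$ as $N \to \infty$.

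For the probabilistic step I invoke the relative entropy method developed in Section \ref{sec:prob}, which is proved there for the general class of polynomial rates treated in the paper and therefore applies to \textbf{Case 2} directly. The resulting entropy bound combined with the entropy inequality and the uniform $L^\infty$ bound on $(u^N, v^N)$ yields convergence of $\pi^N_t$ to $(u^N(t,\cdot), v^N(t,\cdot))$ integrated against smooth $\varphi$. The theorem is thereby reduced to showing that $(u^N, v^N)$ converges in the sense of distributions on $Q_T$ to $(w_+, (m w_-)^{1/m})$, where $w$ is the unique weak solution of the one-phase Stefan problem with initial datum $u_0 - v_0^m/m$.

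For the PDE step the crucial algebraic identity is that multiplying the second equation by $(v^N)^{m-1}$ gives $\partial_t ((v^N)^m/m) = -K u^N (v^N)^m$, so that setting $w^N \coloneqq u^N - (v^N)^m/m$ one obtains $\partial_t w^N = \Delta^N u^N$, with the factor $K$ eliminated. From here I would: (i) use the a priori estimates of Section \ref{sec:PDE} together with testing the first equation against $u^N$ and exploiting the nonnegativity of the reaction term, to secure uniform $L^\infty$ bounds on $u^N, v^N$ and a uniform $L^2(0,T;H^1)$-type bound on $u^N$; (ii) integrate the first equation over $Q_T$ to obtain $K \int_0^T\!\!\int u^N (v^N)^m\,d\theta\,dt \le C$, whence $u^N(v^N)^m \to 0$ strongly in $L^1(Q_T)$; (iii) combine the $L^\infty$ bound on $w^N$ with the representation $\partial_t w^N = \Delta^N u^N$ and the $L^2 H^1$-bound on $u^N$ to extract strong $L^2(Q_T)$ compactness of a subsequence of $w^N$ via an Aubin--Lions type argument; and (iv) pass to the limit in the weak formulation of the $w^N$-equation to identify the limit $w$ as the weak solution of the Stefan problem, using uniqueness to conclude that the full sequence converges.

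The most delicate point will be the identification of the limit profiles as $u = w_+$ and $v^m = m w_-$, which requires the spatial segregation property $u v \equiv 0$ a.e. in $Q_T$ together with a nonlinear-weak-limit argument for $(v^N)^m$, since (B2) only supplies weak convergence. Segregation follows from the $L^1$-smallness of $u^N(v^N)^m$ derived above, combined with a companion $L^1$-bound read off the $v^N$-equation, which together force the limit supports of $u$ and $v$ to be essentially disjoint. To promote the weak convergence of $(v^N)^m$ to the pointwise identity outside $\{u>0\}$, I would exploit the time-monotonicity of $v^N$ implied by $\partial_t v^N \le 0$. A final technical point is that all a priori bounds from Section \ref{sec:PDE} must be uniform in the joint divergence $K = K(N) \to \infty$ allowed by (A3)$_\delta$, an issue handled through the monotone/Lyapunov structure of the system rather than any smoothing from the discrete Laplacian.
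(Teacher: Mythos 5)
Your overall skeleton is the paper's: Theorem \ref{prob part} plus the entropy inequality for the probabilistic reduction, then a deterministic limit of the semi-discretized system, then uniqueness of the weak Stefan solution to pass from subsequences to the full sequence. But there are two genuine gaps in your PDE step.

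First, you work with the wrong semi-discretized system. In \textbf{Case 2} the microscopic rate is $c_1(\eta_1,\eta_2)=\eta_2(z_1)\cdots\eta_2(z_{m-1})$, so the reference measure in the relative entropy method must be parametrized by the solution of (\ref{dHDL eq}), whose reaction term is the spatially scattered product $K\,u^N(t,x)\,v^N(t,x)\,v^N(t,x+z_1)\cdots v^N(t,x+z_{m-1})$, not $K\,u^N(t,x)\,v^N(t,x)^m$; with your homogenized system the linear terms in $\omega$ no longer cancel in Lemma \ref{Yau integrand}, so Theorem \ref{prob part} does not apply as stated. Consequently the ``exact'' identity $\partial_t w^N=\Delta^N u^N$ you call the crucial algebraic step is not available. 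One must first replace the scattered product by $(v^N(t,x))^m$, and since $v^N$ does not diffuse this requires the bound $|\nabla^N v^N|=O(K^2)$, obtained from the explicit formula $v^N(t,x)=v^N_0(x)\,e^{-K\int_0^t u^N(\tau,x)\,d\tau}$ together with Lemma \ref{grad u} and (A1); the resulting error $O(K^3/N)$ in the $w^N$-equation vanishes only because of (A3)$_\delta$. This replacement is one of the two main technical points of Section \ref{sec:case2} and is entirely missing from your plan.

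Second, your compactness and segregation arguments do not close. Aubin--Lions applied to $w^N=u^N-(v^N)^m/m$ cannot work: $(v^N)^m$ has no spatial regularity uniform in $N$ (its discrete gradient is only $O(K^2)$ with $K\to\infty$), so $w^N$ is bounded merely in $L^\infty(Q_T)$ with $\partial_t w^N$ controlled in a negative space, which is not enough for strong $L^2(Q_T)$ compactness --- and strong compactness of $w^N$ is in any case not what is needed, since the weak formulation is linear in $w^N$. What is needed, and what your segregation step tacitly uses, is strong compactness of $u^N$ itself: from $K\iint_{Q_T} u^N v^N\,d\theta\,dt\le 1$ (and the analogous bound with $(v^N)^m$) one can conclude $uv=0$ a.e.\ for the limits only if one factor converges strongly; the product of two merely weakly convergent sequences does not pass to the limit, so ``$L^1$-smallness forces the limit supports to be disjoint'' is a non sequitur as stated. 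The paper's route (Lemma \ref{u relcpt}) proves strong $L^p(Q_T)$ compactness of $u^N$ by Fr\'echet--Kolmogorov, using Lemmas \ref{react int} and \ref{energy u}, keeps only weak convergence for $v^N$ and $(v^N)^m$ (Lemma \ref{v relcpt}, assumption (B2)), and uses the strong convergence of $u^N$ both to get $uv=0$ and to identify $u=w_+$, $v^m=m\,w_-$ in the limit equation; your appeal to time-monotonicity of $v^N$ is not needed for this and does not by itself repair the missing strong convergence of $u^N$.
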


Uniqueness of the weak solution to the one-phase Stefan problem is proved in \cite{HHP00}. Moreover, we can show analogously in \cite{HHP96} that if the limiting interface $\Gamma (t) $ is smooth in $\mathbb{T}^d$ and $u$, $v$ are smooth on $\Omega^u (t) $, $\Omega^v (t) $ for every $t \in [0, T]$, respectively, then $u$ and $v$ satisfy the following free boundary problem in strong form: 
\begin{align}
\label{strong Stefan}
\begin{cases} 
\partial_t u = \Delta u &\text{ in }  Q^u_T \\
\dfrac{v_0^m }{m } V = - \dfrac{\partial u}{\partial n_\Gamma} &\text{ on } \Gamma \\
u=0 &\text{ on } \Gamma \\
u(0,\cdot) = u_0 (\cdot) &\text{ in } \Omega^u(0) \\
v \equiv v_0 & \text{ in } Q^v_T
\end{cases}
\end{align}
where $V$ is the normal velocity of the free boundary $\Gamma(t)$ and $n_\Gamma$ is the unit normal vector on $\Gamma(t)$ oriented from $\Omega^u(t)$ to $\Omega^v(t)$. When the above strong form holds, this system is called a one-phase Stefan problem with latent heat $v_0^m |_{\Gamma(t)} / m$. In this \textbf{Case 2}, there exists a nontrivial example of initial functions $u_0 $ and $v_0$ and their corresponding approximating sequences $u^N_0$ and $v^N_0$ satisfying the assumptions (A1) and (B2). For example, initial functions $u_0$ and $v_0$ are bounded from above by $C_2$ and suppose $u_0$ and $v_0$ are smooth on $\mathbb{T}^d$ and $\text{supp} (v_0)$, respectively, as shown in Figure \ref{macrocase2}. In this one-dimensional example, $\Omega^u (0) \cup \Omega^v (0)  = \mathbb{T}^d$ holds and $\Gamma (0) = \partial \Omega^u(0) = \partial \Omega^v (0) $ is a set consisting two points: one point is placed slightly right form the center and the other point is the identified endpoint in Figure \ref{macrocase2}. For these initial functions $u_0$ and $v_0$, we can choose approximating functions $u^N_0$ and $v^N_0$ by the same manner in \textbf{Case 1}, but we have to retake values of $v^N_0$ near interface points on $\Gamma (0)$ in order that the derivative growth $| \nabla^N v^N_0 (x) | \le C_0 K$ holds. This can be done through the following procedure. First we sample values of $v^N_0$ on points which have distance larger than $1/2K$ from two points in $\Gamma (0)$ by the same manner as in \textbf{Case 1} and then we linearly interpolate values of $v^N_0$ on other remaining points. Then we
can easily see that this construction provides us an example of approximating functions $u^N_0$ and $v^N_0$ which satisfy the assumptions (A1) and (B2) simultaneously.

\begin{figure}[h]
 \centering
 \includegraphics[width=14cm]{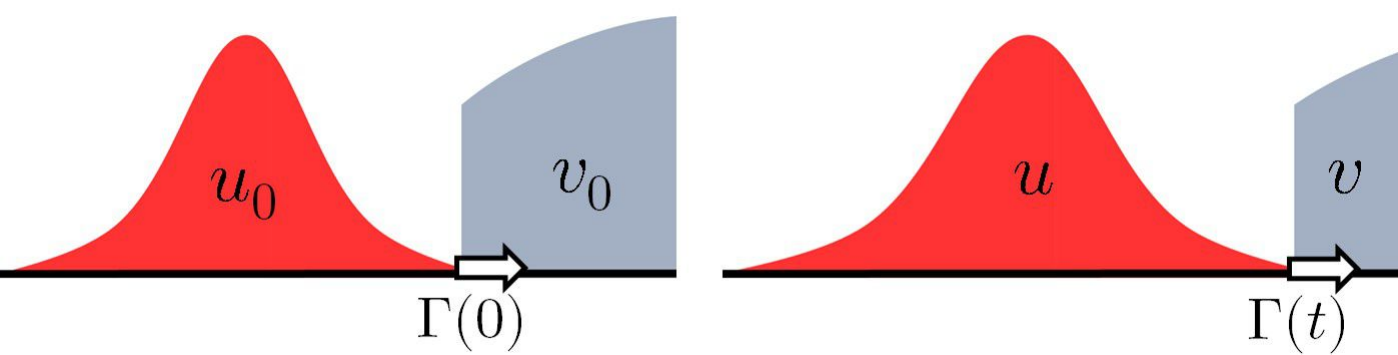}
 \caption{Interface behavior for \textbf{Case 2}. The left figure shows an example of initial functions $u_0$ and $v_0$, while the right one shows a typical situation at some positive time $t $. }
 \label{macrocase2}
\end{figure}

Finally, if reaction rates $c_1$ and $c_2$ are of \textbf{Case 3}, then we get immovable behavior of the limiting interface as follows. 

\begin{theorem}[Immovable interface]
\label{main thm3}
Assume reaction rates $c_1$ and $c_2$ are of \textbf{Case 3}. Assume (A1), (A2), (A3)$_\delta$ and (B3) with some $\delta > 0 $. Then there exists a subsequence $(N_k)$ of $(N)$ and a pair of real-valued functions $ (u, v) $ and a functional $\zeta $ on $Q_T$ such that for every $\varepsilon > 0$ and $\varphi = ( \varphi_1 ,\varphi_2 ) \in C^\infty( Q_T ; \mathbb{R}^2)$ we have  
\[
%\lim_{k \to \infty} \mathbb{P}^{N_k}_{\mu^{N_k}_0} \big( \left|  ( \langle\pi^{N_k}_t, \varphi  \rangle - \langle ( u (t, \cdot ) , v (t, \cdot ) ) ,  \varphi \rangle_{L^2(\mathbb{T}^d; \mathbb{R}^2 )} )  \right|  > \varepsilon  \big)  = 0 
\lim_{N\to \infty} \mathbb{P}^N_{\mu^N_0} \left( \left| \int_0^T \big( \langle\pi^N_{1,t}, \varphi (t, \cdot )  \rangle - \langle ( u (t,\cdot), v (t,\cdot) ) , \varphi (t, \cdot)  \rangle_{L^2(\mathbb{T}^d; \mathbb{R}^2 )} \big) dt \right| > \varepsilon  \right) =0
\]
and
\begin{equation*}
\begin{aligned}
&u, u^{m/2} \in L^\infty(Q_T) \cap L^2(0,T;H^1(\mathbb{T}^d)), v \in L^\infty (Q_T), \zeta \in H^{-1} (Q_T), \\
&0 \leq u \leq 1, 0 \leq v \leq 1, uv=0 \text{ a.e. in } Q_T, \quad \zeta \geq 0 \text{ in } H^{-1} (Q_T)
\end{aligned}
\end{equation*}
satisfying
\[
\iint_{Q_T} \left\{ -\left( \frac{u^m}{m}-v \right) \varphi_t + \frac{2}{m} u^{\frac{m}{2}} \nabla u^{\frac{m}{2}} \cdot \nabla \varphi \right\} d\theta dt + \frac{4(m-1)}{m^2}  {}_{H^{-1}(Q_T)}\langle \zeta, \varphi \rangle_{H^1_0 (Q_T)}=0 
\]
for all $\varphi \in H^1_0 (Q_T)$.  
Furthermore, assume the same conditions stated in Proposition \ref{immovable}. Then $u, v$ and $\zeta$ satisfy the followings:
\[
\begin{aligned}
&V \equiv 0  &&\text{ on } \Gamma \\
&\partial_t u = \Delta u && \text{ in } (0, T] \times  \Omega^u (0)  \\
&u=0 && \text{ on } (0, T] \times \Gamma(0)  \\
&v=v_0 ,\quad  \zeta = | \nabla u^{m/2} |^2 && \text{ in } Q_T  
\end{aligned}
\]
\end{theorem}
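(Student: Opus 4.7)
The plan follows the two-step strategy of Section \ref{sec:strategy}. First, the probabilistic part of Section \ref{sec:prob} (Yau's relative entropy method with product Bernoulli reference measure $\nu^N_t$ parametrized by $(u^N(t,\cdot), v^N(t,\cdot))$ solving the semi-discretized system
\[
\partial_t u^N = \Delta^N u^N - K u^N v^N, \qquad \partial_t v^N = - K\, c_2(u^N)\, u^N v^N ,
\]
with initial data from (A1)) yields $H(\mu^N_t \mid \nu^N_t) = o(N^d)$, so empirical averages are close in probability to integrals against $(u^N, v^N)$. This reduces Theorem \ref{main thm3} to passing $N \to \infty$ in this deterministic semi-discretized system.

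The key algebraic observation for Case 3 is that, macroscopically, $c_2(u^N) \cdot u^N v^N$ approximates $(u^N)^m v^N / u^N \cdot u^N v^N$ in such a way that if one sets $w^N = (u^N)^m/m - v^N$, then the two reaction terms cancel up to discrete corrections that are controlled by block averaging. Using the continuous identity
\[
u^{m-1} \Delta u = \Delta\!\Bigl(\tfrac{u^m}{m}\Bigr) - \tfrac{4(m-1)}{m^2} |\nabla u^{m/2}|^2,
\]
the limiting equation for $w = u^m/m - v$ reads $\partial_t w = \Delta(u^m/m) - \tfrac{4(m-1)}{m^2} |\nabla u^{m/2}|^2$, which is exactly the weak formulation in the theorem statement after integration by parts against $\varphi \in H^1_0(Q_T)$, with $\zeta$ the weak-$*$ limit of the nonnegative discrete gradient quantity $|\nabla^N (u^N)^{m/2}|^2$.

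To extract subsequential limits, I use the a priori bounds of Section \ref{sec:PDE}: the comparison $0 \le u^N, v^N \le 1$; multiplying the $u^N$-equation by $u^N$ and by $(u^N)^{m-1}$ respectively yields uniform bounds on $u^N$ and $(u^N)^{m/2}$ in $L^2(0,T; H^1(\mathbb{T}^d))$ (the reaction contributes a nonnegative dissipation); and the reformulated $w^N$-equation bounds $\partial_t w^N$ in $H^{-1}$. Weak limits $u, v$ with the claimed regularity and a weak-$*$ limit $\zeta \ge 0$ then exist along a subsequence. Strong $L^2$ convergence of $u^N$ (and hence of $(u^N)^{m/2}$) follows by Aubin--Lions applied to $w^N$, which suffices to pass to the limit in the nonlinear flux $u^{m/2} \nabla u^{m/2}$. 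The segregation $uv = 0$ a.e.\ follows from testing the $u^N$-equation against $1$ to bound $\iint K u^N v^N \,d\theta dt$ uniformly in $N$ while $K \to \infty$.

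For the supplementary identifications under the hypotheses of Proposition \ref{immovable}, I pass to the limit in the $v^N$-equation, which has no diffusion and reaction $K (u^N)^m v^N$: on $\Omega^u(0)^c$ the initial function $v_0$ is supported and $u \equiv 0$ in the limit, while on $\Omega^u(0)$ one has $v \equiv 0$; in both regions $\partial_t v = 0$ distributionally, so $v \equiv v_0$ and $V \equiv 0$ on $\Gamma$. Substituting $v = v_0$ (time-independent) into the weak identity yields both $\partial_t u = \Delta u$ in $(0,T] \times \Omega^u(0)$ with $u=0$ on $\Gamma(0)$, and $\zeta = |\nabla u^{m/2}|^2$. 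The main obstacle is precisely the last identification: weak convergence only provides $\zeta \ge |\nabla u^{m/2}|^2$ by lower semicontinuity, so one must upgrade the convergence of $(u^N)^{m/2}$ in $L^2(0,T; H^1(\mathbb{T}^d))$ to strong via an energy/entropy dissipation equality for the semi-discretized system, matching the limsup of the discrete Dirichlet energy to its limit. This strengthening, together with the exclusion of type-$2$ mass transport forced by $d_2 = 0$, is the genuinely delicate step that distinguishes Case 3 from Cases 1 and 2.
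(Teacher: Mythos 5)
Your overall architecture matches the paper's: reduce to the semi-discretized system for \textbf{Case 3} via Theorem \ref{prob part}, replace the spatially scattered product $u^N(t,x)u^N(t,x+z_1)\cdots u^N(t,x+z_{m-1})$ by $(u^N(t,x))^m$ using the pointwise gradient bound of Lemma \ref{grad u}, multiply the $u^N$-equation by $(u^N)^{m-1}$ and subtract the $v^N$-equation to cancel the divergent reaction, and define $\zeta$ as a weak-$*$ limit in $H^{-1}(Q_T)$ of the nonnegative quantity $|\nabla^N (u^N)^{m/2}|^2$; this is exactly the derivation of the weak form in Theorem \ref{case3 thm}. The genuine gap is your compactness step: you claim strong $L^2(Q_T)$ convergence of $u^N$ ``by Aubin--Lions applied to $w^N$'' with $w^N=(u^N)^m/m-v^N$. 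Aubin--Lions needs a uniform spatial bound on $w^N$ in a space compactly embedded in $L^2(\mathbb{T}^d)$, but $v^N$ has no diffusion and no uniform gradient bound (its discrete gradient is only $O(K^2)$ pointwise, and its limit $v$ is generically discontinuous), so $\{w^N\}$ is not bounded in $L^2(0,T;H^1(\mathbb{T}^d))$ and the lemma does not apply; moreover, even strong convergence of $w^N$ combined with weak convergence of $v^N$ would not disentangle strong convergence of $u^N$ alone. The paper instead obtains strong compactness of $u^N$ directly by the Fr\'echet--Kolmogorov criterion (as in Lemma \ref{u relcpt}): space-translation equicontinuity from the uniform energy bound of Lemma \ref{energy u}, and time-translation equicontinuity from the equation together with the uniform bound on the time-integrated reaction term of Lemma \ref{react int}. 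Everything downstream of your compactness claim (passage to the limit in $u^{m/2}\nabla u^{m/2}$, and $uv=0$ from the bound on $\iint K u^N v^N\,d\theta\,dt$) is fine once this step is repaired, since a.e.\ convergence and the uniform bound then give $(u^N)^{m/2}\to u^{m/2}$ strongly as well.

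The second problem concerns the final ``Furthermore'' assertions. In the paper these are not reproved: under the extra hypotheses they follow by directly applying Proposition \ref{immovable}, a known result of \cite{IMMN17}, to the triple $(u,v,\zeta)$ produced by Theorem \ref{case3 thm}. Your attempted derivation is both circular and incomplete: you argue $\partial_t v=0$ by asserting $u\equiv 0$ outside $\Omega^u(0)$ for all positive times, which is essentially the conclusion $V\equiv 0$ you are trying to establish, and you explicitly concede that weak convergence only yields an inequality for $\zeta$ and that the needed strong $L^2(0,T;H^1)$ convergence of $(u^N)^{m/2}$ (an energy-identity argument) is left unproven. Since the theorem is stated under the hypotheses of Proposition \ref{immovable}, the intended and sufficient move is simply to invoke that proposition; if you wish to avoid citing it, you must actually supply the missing identification argument rather than flag it as delicate.
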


Figure \ref{macrocase3} explains a non-trivial example of interface evolution corresponding to \textbf{Case 3}. Also for this case, there might exist some jump points for $v_0$ at the interface so that we conduct the same procedure as in \textbf{Case 2} to find $u^N_0$ and $v^N_0$ which fulfill the requirements (A1) and (B3).

\begin{figure}[h]
 \centering
 \includegraphics[width=14cm]{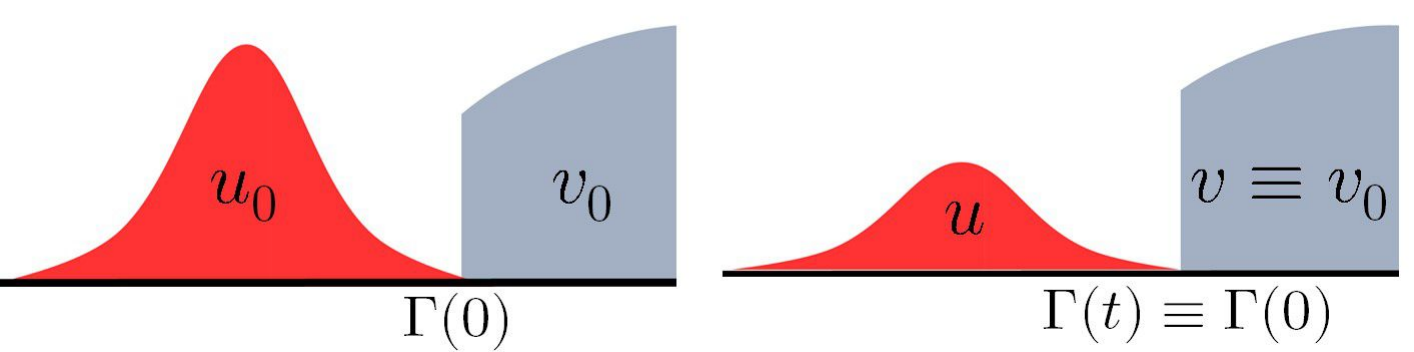}
 \caption{Interface behavior for \textbf{Case 3}. The left figure shows an example of initial functions $u_0$ and $v_0$, while the right one shows a typical situation at some positive time $t $. }
 \label{macrocase3}
\end{figure}

In \textbf{Case 1}, starting from initial densities separated in two particle-phases, though, the initial interface vanishes instantaneously and type-$1$ particles occupy the whole space (see Section \ref{sec:case1}). This is intuitively caused by weak killing effect of type-$2$ particles (recall that time evolution of type-$2$ particles is composed only of annihilation since they do not diffuse), which makes them die out in an instant. On the other hand, in \textbf{Case 2} and \textbf{Case 3}, interfaces are created and they move according to the two-phase Stefan problem in \textbf{Case 2} (see Section \ref{sec:case2}) while they does not move at all in \textbf{Case 3} (see Section \ref{sec:case3}). In \textbf{Case 2}, roughly speaking, multiplication of some monomial of the density of type-$2$ particles to the second equation of (\ref{dHDL eq}) enables us to obtain another reaction-diffusion system whose reaction terms coincide up to a positive constant (but we have to replace a locally scattered product into a spatially-homogeneous one by using uniform boundedness of spatial derivatives). Therefore, this case is essentially considered in \cite{DMFPV19} but the diffusion coefficient for type-$2$ particles is zero in our model and consequently we get a one-phase free boundary problem instead. Finally, in \textbf{Case 3}, type-$1$ particles have weak killing effect. Comparing to \textbf{Case 1}, it seems that type-$1$ particles die out. However, since type-$2$ particles has no ability to diffuse, they do not invade the territory of type-$1$ particles so that the initial interface does not move at all.

%%%%%%%%%%%%%%%%%%%%%%%%%%%%%%%%%%%%%%%
%    Strategy of proof
%%%%%%%%%%%%%%%%%%%%%%%%%%%%%%%%%%%%%%%
\section{Strategy of proof}
\label{sec:strategy}
\subsection{Yau's relative entropy method}
As we noted at the beginning of this paper, the proof of our main theorems is based on Yau's relative entropy method which is introduced in \cite{Yau91} combined with a (deterministic) result which ensures a solution of a semi-discretized system converges to a solution of a continuous PDE system . In this section, we explain what we need to prove the main theorems in detail. To see that, we first introduce a discretized version of macroscopic equations which characterize time evolution of density. For every fixed $T > 0$, let  $u^N= u^N (t,x) $ and $v^N = v^N (t,x) $ be a unique solution of the following semi-discretized  reaction-diffusion system
%dHDL eq
\begin{equation}
\label{dHDL eq}
\begin{cases}
\partial_t u^N (t, x) = \Delta^N u^N (t,x) -K(N) c_{1, x} ( u^N(t), v^N(t)) u^N (t,x) v^N (t,x) \\
\partial_t v^N (t, x) =                             -K(N) c_{2, x}  (u^N(t) ) u^N (t,x) v^N (t,x)  
\end{cases}
\end{equation}
for every $t \in [0, T]$ and $x \in \mathbb{T}^d_N$. Here $\Delta^N$ denotes the discrete Laplacian defined by 
\[
\Delta^N u(x) \coloneqq N^2 \sum_{y \in \mathbb{T}^d_N, |x-y|=1} \left( u(y)-u(x)  \right) 
\]
for every $u=\{ u(x) \}_{x \in \mathbb{T}^d_N}$ and $c_{i, x}$ is defined by $c_{i, x} ( u (t) ,v (t) ) = c_i (\tau_x u (t), \tau_x v (t) )$ for each $i = 1,2$ and every $[0,1]$-valued functions $u=\{ u (x) \}_{x \in \mathbb{T}^d_N}$ and  $v = \{ v (x) \}_{x \in \mathbb{T}^d_N}$, and non-negative functionals $c_1$ and $c_2$ on $[0,1]^{ \mathbb{T}^d_N } \times [0,1]^{ \mathbb{T}^d_N }$ defined in Section \ref{sec:model}. The semi-discretized system (\ref{dHDL eq}) is a system of ODEs whose solutions are contained in the interval $(0,1)$ uniformly in $(t,x ) \in [0,T] \times \mathbb{T}^d_N$ for every $N \in \mathbb{N}$ and thus it has a unique pair of time-global solution. Indeed, in Lemma \ref{dsol est2} we prove $ 0 < u^N (t, x), v^N (t,x) < 1 $ for every $t \in [ 0, T]$ and $x \in \mathbb{T}^d_N$ provided $0 < u^N (0, x), v^N (0, x) < 1$ for all $x \in \mathbb{T}^d_N$.

In Section \ref{sec:PDE}, we give some general estimates derived for the solution of this semi-discretized system (\ref{dHDL eq}). Moreover, let $\{ u^N(t,\theta) \}_{t \in [0,T], \theta \in \mathbb{T}^d}$ and $\{ v^N(t,\theta) \}_{t \in [0,T], \theta \in \mathbb{T}^d}$ be macroscopic functions on $Q_T:= [0,T]\times \mathbb{T}^d$ defined by
\begin{equation}
\begin{aligned}
\label{dsol}
u^N(t,\theta) := \sum_{x \in \mathbb{T}^d_N} u^N(t, x) \prod_{i=1}^d \mathbf{1}_{\left[ \frac{x_i}{N}-\frac{1}{2N}, \frac{x_i}{N}+\frac{1}{2N} \right)}(\theta_i), \\
v^N(t,\theta) := \sum_{x \in \mathbb{T}^d_N} v^N(t, x) \prod_{i=1}^d \mathbf{1}_{\left[ \frac{x_i}{N}-\frac{1}{2N}, \frac{x_i}{N}+\frac{1}{2N} \right)}(\theta_i)
\end{aligned}
\end{equation}
for every $t \in [0,T]$ and $\theta=(\theta_i)_{1 \leq i \leq d} \in \mathbb{T}^d$.

We prove Theorems \ref{main thm1}, \ref{main thm2} and \ref{main thm3} with the relative entropy method introduced in \cite{Yau91} at hand. Let $\mu^N_t$ be the probability distribution of $\eta^N_t=(\eta^N_{1,t}, \eta^N_{2,t})$ on $\mathcal{X}^2_N$ and let $\nu^N_t$ be the Bernoulli measure on $\mathcal{X}^2_N$ with mean $(u^N(t), v^N(t))$ for $u^N (t) =\{ u^N(t,x)  \}_{x \in \mathbb{T}^d_N}$ and $v^N (t) =\{ v^N(t,x)  \}_{x \in \mathbb{T}^d_N}$. By Lemma \ref{dsol est}, for each fixed $N \in \mathbb{N}$, values of $u^N(t,x)$ and $v^N(t,x)$ are contained in the interval $[0,1]$ provided $0 \le u^N_0(x), v^N_0(x) \le 1$ at initial time, which makes our definition of $\nu^N_t$ well-defined. In this section, we show the next result which states that the distribution of the microscopic dynamics $\{\eta^N_t\}_{t \geq 0}$ is closely described by the semi-discretized system (\ref{dHDL eq}) asymptotically as $N$ tends to infinity. This plays an essential role to prove our main theorems.

\begin{theorem}
\label{prob part}
Assume (A1), (A2) and (A3)$_\delta$ with $\delta = \delta ( T ) > 0$ sufficiently small. Then for any $t \in [0,T]$ we have $H(\mu^N_t | \nu^N_t) = o (N^d) $ as $N \to \infty$.
\end{theorem}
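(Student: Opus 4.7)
The plan is to apply Yau's relative entropy inequality and close a Gronwall-type estimate whose driving terms are made small by the fact that $(u^N(t,x), v^N(t,x))$ is designed to solve the semi-discretized system (\ref{dHDL eq}). The starting point is
\[
\partial_t H(\mu^N_t \mid \nu^N_t) \le \int \frac{1}{\psi^N_t}\bigl( L_N^{*} \psi^N_t - \partial_t \psi^N_t \bigr)\, d\mu^N_t,
\]
where $\psi^N_t$ is the density of $\nu^N_t$ with respect to a fixed product Bernoulli reference and $L_N^{*}$ its adjoint in the corresponding $L^2$. Because $\nu^N_t$ is a product measure with means $(u^N(t,x), v^N(t,x))$, both $\partial_t \log \psi^N_t$ and $L_N^{*}\psi^N_t/\psi^N_t$ admit explicit, site-by-site expressions. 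I would decompose the integrand into a diffusive piece arising from $N^2 L_K$ and $\partial_t u^N$, and a reactive piece arising from $K L_G$ and the reaction terms in (\ref{dHDL eq}), and verify that in each piece the leading order cancels precisely because $(u^N, v^N)$ satisfies (\ref{dHDL eq}).

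For the diffusive piece, a discrete summation by parts produces, at linear order in $\eta_1-u^N$, the term $\sum_x [\Delta^N u^N(t,x)-\partial_t u^N(t,x)]\, h'(u^N(t,x))(\eta_1(x)-u^N(t,x))$ with $h(u)=\log\frac{u}{1-u}$. Substituting the first equation of (\ref{dHDL eq}) converts this bracket into the reactive contribution $-K c_{1,x}(u^N,v^N)u^N v^N$ and cancels the pure diffusion exactly. What remains is a quadratic gradient remainder of the form $\sum_x |\nabla^N h(u^N(t,x))|^2$. Using the gradient bound $|\nabla^N u^N|\le C_0 K$ from (A1) and the two-sided bound $e^{-CK}\le u^N,v^N\le 1-e^{-CK}$ preserved in time via Lemma \ref{dsol est2}, this remainder is of order $N^d$ times a sub-polynomial factor of $N$, hence $o(N^d)$ under (A3)$_\delta$ for $\delta$ small.

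For the reactive piece the critical observation is that since $c_2$ depends only on $\eta_1$ and the supports of $c_1,c_2$ do not contain the origin, the Bernoulli expectation of the cylinder function $c_{i,x}(\eta)\eta_1(x)\eta_2(x)$ factorizes exactly into $c_{i,x}(u^N(t),v^N(t))u^N(t,x)v^N(t,x)$. Consequently, the reactive remainder is a sum over $x$ of centered cylinder functions
\[
W_x(\eta) := c_{1,x}(\eta)\eta_1(x)\eta_2(x) - c_{1,x}(u^N(t),v^N(t))u^N(t,x)v^N(t,x),
\]
together with their type-$2$ analogues, weighted by the logistic coefficients $[\eta_i(x)-u^N]/[u^N(1-u^N)]$ and by $K(N)$. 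Applying the entropy inequality $\int f\, d\mu \le \gamma^{-1}\log\int e^{\gamma f}\, d\nu + \gamma^{-1} H(\mu\mid \nu)$ to this weighted sum with a scale $\gamma$ adapted to the prefactors, combined with a log-moment expansion for centered bounded cylinder functions under a product measure, yields a bound of the form $C(T) H(\mu^N_t\mid \nu^N_t) + \varepsilon_N N^d$ with $\varepsilon_N\to 0$, provided $\delta$ in (A3)$_\delta$ is small enough to offset the exponential-in-$K$ factors coming from the logistic denominators.

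Combining the two estimates yields a differential inequality $\partial_t H(\mu^N_t\mid \nu^N_t)\le C(T) H(\mu^N_t\mid \nu^N_t) + o(N^d)$, and Gronwall's lemma together with the initial bound $H(\mu^N_0\mid \nu^N_0)=O(N^{d-\delta_0})$ from (A2) closes the argument on $[0,T]$. The main obstacle is the reactive piece, because the diverging rate $K(N)$ amplifies the Glauber-driven fluctuations and is competed against by the logistic denominators $u^N(1-u^N)$, which by (A1) can be as small as $e^{-CK}$. The argument therefore hinges on two structural ingredients: the exact Bernoulli centering of $W_x$, which requires $c_2$ to depend only on $\eta_1$ (see Remark \ref{c2 reason}), and the propagation of the lower bound $u^N,v^N\ge e^{-CK}$ through the semi-discrete ODE system, which together make the accumulated reactive error $o(N^d)$ precisely under the slow-growth condition (A3)$_\delta$.
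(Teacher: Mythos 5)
There is a genuine gap, and it sits exactly at the heart of the matter: your treatment of the remainder terms after the linear cancellation is not strong enough to close the Gronwall argument. Two specific problems. First, your diffusive remainder is misidentified: after the linear terms cancel via (\ref{dHDL eq}), what is left from the Kawasaki part is not a deterministic quantity like $\sum_x |\nabla^N h(u^N(t,x))|^2$ but the random quadratic form
\[
V_K(t) = -\frac{N^2}{2}\sum_{|x-y|=1}\bigl(u^N(t,y)-u^N(t,x)\bigr)^2\,\omega_{1,x}\,\omega_{1,y},
\]
a two-point correlation in the centered occupation variables whose coefficients are of size $K^2$ per site (and whose $\omega$'s carry $\chi(u^N)^{-1}\le e^{CK}$ factors); it needs estimation of expectations under $\mu^N_t$, not a pointwise bound. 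Second, and more fundamentally, the direct entropy-inequality-plus-log-moment bound you propose for the reactive remainder does not produce $C(T)H(\mu^N_t|\nu^N_t)+o(N^d)$, nor even $CK^2H+o(N^d)$. The remainder is $K\sum_x \xi_x$ with $\xi_x$ centered, finite-range dependent, of variance of order $1$ per site; the exponential moment bound then gives $\gamma^{-1}H + C\gamma K^2N^d$, and there is no admissible choice of $\gamma$: making the second term $o(N^d)$ forces $\gamma^{-1}\gg K^2$, i.e.\ a coefficient on $H$ that grows faster than $K^2\sim\delta^2\log N$, and then the Gronwall factor $e^{(\mathrm{coef})\,T}$ is no longer a small power of $N$ and cannot be beaten by the $N^{-\delta_0}$ from (A2) or the $N^{-\delta_1}$ error. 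This is precisely why the paper does not apply the entropy inequality at the single-site level.

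The missing ingredient is the renormalization step: the paper first replaces the one-site variables by block averages over boxes of side $\ell$ (Lemma \ref{replacement}), paying a cost that is controlled via the integration-by-parts lemmas and the flow lemma (Proposition \ref{flow lem}) and then \emph{absorbed into the negative Dirichlet form term} $-2N^2\mathcal{D}(\sqrt{d\mu^N_t/d\nu^N_t};\nu^N_t)$ of Yau's inequality (Proposition \ref{Yau}) --- a term your starting inequality has already discarded, so you have nothing with which to pay this cost. Only after averaging does the variance per site drop to $O(g_d(\ell)/\ell^d)$, so that the entropy plus concentration inequality (Proposition \ref{concentration}) yields $E_{\mu^N_t}[V^\ell]\le CKH + C_\kappa N^{d-1+\kappa}$, and the combination gives $\frac{d}{dt}H \le CK^2H + O(N^{d-\delta_1})$; the smallness of $\delta$ in (A3)$_\delta$ is then used to ensure $e^{CK^2t}\le N^{C\delta^2 t}$ is dominated by $N^{-\min(\delta_0,\delta_1)}$. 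Minor additional point: the role of the hypothesis that $c_2$ depends only on $\eta_1$ is not the exact Bernoulli centering (which holds anyway because $0\notin\Lambda_1\cup\Lambda_2$), but rather that otherwise quadratic terms in $\omega_2$ appear whose control would require a Dirichlet form for a type-$2$ Kawasaki dynamics that the generator does not contain (Remark \ref{c2 reason}).
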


%\subsection{Consequence of Theorem \ref{prob part}}
\subsection{Proof of Theorem \ref{main thm1}, \ref{main thm2} and \ref{main thm3}}
Once the main ingredient of probabilistic part Theorem \ref{prob part} is proved, we can deduce the main theorems as follows. Let $u^N $ and $v^N $ be functions on $Q_T$ defined by (\ref{dsol}). For any $\varepsilon > 0$ and any smooth test function $\psi \in C^\infty ( \mathbb{T}^d )$, let us define
\[
\begin{aligned}
\mathcal{A}_1 = \mathcal{A}_1( \psi, \varepsilon  ) \coloneqq \{ \eta \in \mathcal{X}^2_N ; \left| \langle \pi^N_{1, t} , \psi \rangle - \langle u^N (t,\cdot) , \psi \rangle_{L^2(\mathbb{T}^d) }  \right| > \varepsilon \},  \\
\mathcal{A}_2 = \mathcal{A}_1( \psi, \varepsilon  ) \coloneqq \{ \eta \in \mathcal{X}^2_N ; \left| \langle \pi^N_{2, t} , \psi \rangle - \langle v^N (t,\cdot) , \psi \rangle_{L^2(\mathbb{T}^d) }  \right| > \varepsilon \}. 
\end{aligned}
\]
Then, as a corollary of the entropy inequality, we get
\[
\mu^N_t (\mathcal{A}_i) \leq \frac{\log 2 + H ( \mu^N_t | \nu^N_t) }{\log(1+ 1/ \nu^N_t (\mathcal{A}_i))}
\]
for each $i = 1, 2 $. Moreover, for the probability of $\mathcal{A}_i$ under the product Bernoulli measure $\nu^N_t$ in the denominator of the above inequality can be estimated as follows. 

\begin{lemma}
\label{nu est}
For any $\psi \in C^\infty (\mathbb{T}^d )$ and $\varepsilon > 0$, there exists a positive constant $C= C (\varepsilon, \| \psi \|_{  L^\infty (\mathbb{T}^d ) })$ such that
\[
\nu^N_t (\mathcal{A}_i (\psi,  \varepsilon ) ) \leq e^{-C N^d} 
.\]
In particular, the above estimate holds uniformly in $\{ \psi ;  \| \psi \|_{ L^\infty (\mathbb{T}^d ) } < M \} $ for every $M >0$. 
\end{lemma}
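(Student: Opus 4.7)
The plan is to reduce the event $\mathcal{A}_i$ to a tail event for a sum of independent bounded centred random variables under the product Bernoulli measure $\nu^N_t$, and then apply Hoeffding's inequality. We treat $\mathcal{A}_1$; the argument for $\mathcal{A}_2$ is identical, using $v^N$ in place of $u^N$ and $\eta_2$ in place of $\eta_1$.

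First I would split the quantity appearing in the definition of $\mathcal{A}_1$ into a ``random'' piece and a ``deterministic discretisation'' piece. Using the definition of the empirical measure and the step-function representation (\ref{dsol}) of $u^N(t,\cdot)$,
\[
\langle \pi^N_{1,t},\psi\rangle-\langle u^N(t,\cdot),\psi\rangle_{L^2(\mathbb{T}^d)}
= \frac{1}{N^d}\sum_{x\in\mathbb{T}^d_N}\bigl(\eta^N_{1,t}(x)-u^N(t,x)\bigr)\psi(x/N) + R_N(\psi),
\]
where
\[
R_N(\psi)=\sum_{x\in\mathbb{T}^d_N}u^N(t,x)\Bigl(\tfrac{1}{N^d}\psi(x/N)-\int_{B(x)}\psi(\theta)\,d\theta\Bigr),
\qquad B(x)=\prod_{i=1}^d[\tfrac{x_i}{N}-\tfrac{1}{2N},\tfrac{x_i}{N}+\tfrac{1}{2N}).
\]
Since $|u^N(t,x)|\le 1$ and $\psi\in C^\infty(\mathbb{T}^d)$, a Taylor expansion of $\psi$ on each cube $B(x)$ gives $|R_N(\psi)|\le C\|\nabla\psi\|_{L^\infty}/N$, so for $N$ large enough (depending on $\varepsilon$ and $\|\nabla\psi\|_\infty$) one has $|R_N(\psi)|<\varepsilon/2$.

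Next I would apply Hoeffding's inequality to the random piece. Under $\nu^N_t$, the variables $\{\eta^N_{1,t}(x)\}_{x\in\mathbb{T}^d_N}$ are independent Bernoulli with means $u^N(t,x)\in[0,1]$, so $X_x:=\eta^N_{1,t}(x)-u^N(t,x)$ are independent, centred, and bounded by $1$. With $a_x=\psi(x/N)$ satisfying $\sum_x a_x^2\le N^d\|\psi\|_{L^\infty}^2$, Hoeffding yields
\[
\nu^N_t\!\left(\Bigl|\sum_{x\in\mathbb{T}^d_N}a_xX_x\Bigr|>\tfrac{\varepsilon}{2}N^d\right)\le 2\exp\!\left(-\frac{(\varepsilon N^d/2)^2}{2\sum_x a_x^2}\right)\le 2\exp\!\left(-\frac{\varepsilon^2 N^d}{8\|\psi\|_{L^\infty}^2}\right).
\]
Combining this with the deterministic bound $|R_N(\psi)|<\varepsilon/2$ gives $\nu^N_t(\mathcal{A}_1)\le 2\exp(-\varepsilon^2 N^d/(8\|\psi\|_{L^\infty}^2))$, which can be absorbed into $e^{-CN^d}$ for $N$ large with $C=C(\varepsilon,\|\psi\|_{L^\infty})$ strictly less than $\varepsilon^2/(8\|\psi\|_{L^\infty}^2)$.

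Finally, the uniformity statement is automatic: the constant $C$ above depends on $\psi$ only through $\|\psi\|_{L^\infty}$ and $\|\nabla\psi\|_{L^\infty}$. The former is bounded by $M$ by assumption, and the latter enters only in the discretisation error $R_N(\psi)$, which is forced below $\varepsilon/2$ for $N$ sufficiently large. There is no serious obstacle here; the only point that requires minor care is ensuring the discretisation error $R_N(\psi)$ is swallowed into $\varepsilon$ uniformly on bounded sets of $\psi$, which is handled by taking $N$ large enough depending on $M$.
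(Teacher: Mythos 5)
Your argument is correct and is essentially the proof the paper has in mind: the paper omits it, citing \cite{DMFPV19}, where the bound is obtained by exactly this kind of concentration estimate (exponential Chebyshev/Hoeffding) for the independent Bernoulli variables $\eta^N_{1,t}(x)$ under the product measure $\nu^N_t$, after separating off the $O(\|\nabla\psi\|_{L^\infty}/N)$ discretisation error between $\tfrac{1}{N^d}\sum_x u^N(t,x)\psi(x/N)$ and $\langle u^N(t,\cdot),\psi\rangle_{L^2(\mathbb{T}^d)}$. One small caveat: the threshold in $N$ needed to make $|R_N(\psi)|<\varepsilon/2$ depends on $\|\nabla\psi\|_{L^\infty}$ rather than only on $M$, so the claimed uniformity really holds over families with a uniform gradient bound --- which is all that is used, since the lemma is applied to $\psi=\varphi_i(t,\cdot)$ with $\varphi\in C^\infty(Q_T;\mathbb{R}^2)$ and $t\in[0,T]$.
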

The proof of Lemma \ref{nu est} can be done in the same manner as \cite{DMFPV19} so that we omit the proof here. Recalling $H (\mu^N_t | \nu^N_t ) = o(N^d)$ by Theorem \ref{prob part}, we have 
\[
\lim_{N \to \infty} \mu^N_t (\mathcal{A}_i ( \varphi_i (t, \cdot), \varepsilon  ) ) = 0
\]
for each $i=1,2$, $t \in [0,T]$, $\varepsilon > 0$ and $\varphi = (\varphi_1, \varphi_2 )  \in C^\infty ( Q_T ; \mathbb{R}^2 )$. Therefore, once the proof of Theorem \ref{prob part} is completed, the detailed proof of Theorem \ref{prob part} is given in Section \ref{sec:prob}, then we can give the proof of the probabilistic part of our main theorems as follows. First, the probability appearing in the main theorems (Theorem \ref{main thm1}, \ref{main thm2} and \ref{main thm3}) can be estimated by using Markov's inequality the triangle inequality as 
\begin{equation}
\label{main thm proof}
\begin{aligned}
& \frac{1}{\varepsilon } \int_0^T E_{ \mu^N_t } \big[  | \langle \pi^N_t , \varphi (t, \cdot )  \rangle - \langle ( u^N  (t, \cdot ), v^N  (t, \cdot ) ) , \varphi \rangle_{ L^2 (\mathbb{T}^d ) } | \big] dt \\
& + \frac{1}{ \varepsilon } \bigg| \int_0^T  \langle (u^N (t, \cdot ) -u (t, \cdot ), v^N (t, \cdot ) - v (t, \cdot )) , \varphi  (t, \cdot ) \rangle_{ L^2 (\mathbb{T}^d) } dt \bigg| .   
%& \mu^N_t ( | \langle \pi^N_t , \varphi \rangle - \langle (u ,v) , \varphi \rangle_{ L^2 (\mathbb{T}^d) }   |  > 3 \varepsilon  )  \\ & \quad \le \mu^N_t ( \mathcal{A}_1 (\varphi_1 ) )  +  \mu^N_t ( \mathcal{A}_2 (\varphi_2 ) ) + \frac{1}{\varepsilon } \big| \langle (u^N - u, v^N - v  ) , \varphi \rangle_{ L^2 (\mathbb{T}^d) } \big| 
\end{aligned}
\end{equation}
We can see that these two terms converges to zero as $N$ tends to infinity by combining with results for limiting behavior of semi-discretized reaction-diffusion system (\ref{dHDL eq}) given in Section \ref{sec:case1}, \ref{sec:case2} and \ref{sec:case3} for \textbf{Case 1}, \textbf{Case 2} and \textbf{Case 3}, respectively to complete the proof of Theorems \ref{main thm1}, \ref{main thm2} and \ref{main thm3}. Indeed, we have at least $u^N \rightharpoonup u $ and $v^N \rightharpoonup v$ weakly in $L^2 (Q_T)$ for all cases (see Theorems \ref{case1 thm}, \ref{case2 thm} and \ref{case3 thm}) but one should take a subsequence for \textbf{Case 3}. In particular, the second term in \eqref{main thm proof} vanishes as $N$ tends to infinity. On the other hand, the integrand in the first term can be bounded above by 
\[
\begin{aligned}
E_{ \mu^N_t } \bigg[  \big| \langle \pi^N_t , \varphi \rangle - \langle ( u^N (t, \cdot ), v^N (t, \cdot ) ) , \varphi (t, \cdot) \rangle_{ L^2 (\mathbb{T}^d ) } \big| , \bigcap_{ i = 1, 2 } \mathcal{A}_i (\varphi_i ( t, \cdot ), \tilde{ \varepsilon } ) \bigg] + \tilde{\varepsilon} .
\end{aligned}
\]
However, the first term in the above display converges to zero as $N$ tends to infinity since $\lim_{N \to \infty } \mu^N_t (\mathcal{A}_i ) = 0 $ for each $i = 1, 2$ as we proved at the beginning of this subsection and the quantity inside the expectation is bounded above by a positive constant. Therefore, by taking $\tilde{ \varepsilon } > 0$ small enough to complete the proof.

%In particular, we have for each $\varphi = (\varphi_1, \varphi_2 ) \in C^\infty (Q_T; \mathbb{R}^2)$
%\[
%\iint_{Q_T} | ( u^N - u ) \varphi_1 | d\theta dt \to 0 , \quad 
%\iint_{Q_T} | ( v^N - v ) \varphi_2 | d\theta dt \to 0
%\]
%as $N$ tends to infinity. Therefore, for any $\varepsilon  >0$, the probability appears in main theorems can be estimated as 
%\[
%\begin{aligned}
%& \int_0^T \mu^N_t ( | \langle \pi^N_{1,t}, \varphi_1 (t, \cdot ) \rangle - (u, \varphi_1 (t, \cdot ) )_{ L^2 ( \mathbb{T}^d ) }  | > 2 \varepsilon ) dt \\
%& \le \int_0^T \mu^N_t ( \mathcal{A}_1  ) dt + \int_0^T \mu^N_t ( | (u^N(t, \cdot), \varphi_1 (t, \cdot) )_{ L^2 (\mathbb{T}^d)} - (u (t, \cdot ), \varphi_1 (t, \cdot) )_{ L^2 (\mathbb{T}^d) }  | > \varepsilon ) dt \\
%& \le \int_0^T \mu^N_t (\mathcal{A}_1 ) dt + \frac{1}{\varepsilon } \iint_{ Q_T } | (u^N - u )\varphi_1 | d\theta dt, 
%\end{aligned}
%\]
%which converges to zero as $N$ tends to infinity. Again note here that for \textbf{Case 3} we may take a subsequence to have the convergence of $(u^N,v^N)$ to a pair $(u,v)$ which satisfies the properties in Theorem \ref{main thm3}. Similarly, we can show the convergence for the second component and thus complete the proof of the main theorems. 

%%%%%%%%%%%%%%%%%%%%%%%%%%%%%%%%%%%%%%%%%%%%%%%%%%%%
%   Proof of hydrodynamic limit
%%%%%%%%%%%%%%%%%%%%%%%%%%%%%%%%%%%%%%%%%%%%%%%%%%%%
\section{Proof of Theorem \ref{prob part}}
\label{sec:prob}
\subsection{The relative entropy method}
In this section, we prove Theorem \ref{prob part}. We first define a Dirichlet energy corresponding to the Kawasaki dynamics with respect to the reference measure $\nu $ (namely $\nu$ is a product Bernoulli measure on $\mathcal{X}^2_N$ with full support) as follows: for any $f: \mathcal{X}^2_N \to \mathbb{R}$, we define  
\[
\mathcal{D} \left( f ; \nu \right) \coloneqq \frac{1}{4} \sum_{x \in \mathbb{T}^d_N} \sum_{j =1 }^d \int_{\mathcal{X}^2_N } 
\left[ f (\eta_1^{x, x+e_j} , \eta_2 ) - f ( \eta_1, \eta_2 ) \right]^2 d\nu ( \eta_1, \eta_2 ) . 
\]
If the reference measure $\nu$ is a product Bernoulli measure with constant weight, then the above energy becomes the Dirichlet form corresponding to our Kawasaki dynamics. Recall here that we only have the Kawasaki dynamics for type-$1$ particles. We then have the following estimate on entropy production (time derivative of relative entropy). 

\begin{proposition}[Yau's inequality, \cite{JM18}]
\label{Yau}
For any probability measures $\{\nu_t \}_{t \geq 0}$ and $m$ on $\mathcal{X}^2_N$ which are differentiable in $t$ and full-supported on $\mathcal{X}^2_N$, we have 
\begin{equation}
\label{Yau ineq}
\frac{d}{dt} H(\mu^N_t | \nu_t) \leq -2 N^2 \mathcal{D} \bigg( \sqrt{ \frac{ d \mu^N_t }{ d \nu_t } }; \nu_t \bigg)
+ \int_{\mathcal{X}^2_N} (L^{*, \nu_t}_N \mathbf{1}- \partial_t \log{\psi_t}) d\mu^N_t 
\end{equation}
\noindent
where $L^{*, \nu_t}_N$ is the adjoint operator of $L_N$ on $L^2 (\nu_t)$ and $\psi_t := d\nu_t/ dm$.
\end{proposition}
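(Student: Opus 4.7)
Following Yau's original strategy, I would set $f_t = d\mu^N_t/d\nu_t$ so that $H(\mu^N_t|\nu_t) = \int \log f_t\, d\mu^N_t$. Differentiating in $t$ and using the forward Kolmogorov equation $\partial_t \mu^N_t = L_N^{*} \mu^N_t$, the derivative splits into $\int L_N \log f_t\, d\mu^N_t + \int \partial_t \log f_t\, d\mu^N_t$. Writing $f_t = \phi_t/\psi_t$ with $\phi_t = d\mu^N_t/dm$ and using the mass-preservation identity $\int \partial_t \phi_t\, dm = 0$, the second summand collapses to $-\int \partial_t \log \psi_t\, d\mu^N_t$, which is precisely the $-\partial_t\log\psi_t$ contribution in \eqref{Yau ineq}.

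\textbf{Entropy production bound.} The main step is to bound $\int L_N\log f_t\, d\mu^N_t = \int f_t L_N \log f_t\, d\nu_t$ in terms of the Dirichlet form $\mathcal{D}$. The workhorse is the pointwise inequality $f\, L \log f \le 2\sqrt f\, L\sqrt f$, valid for any jump generator $L$ and any positive $f$, which follows from $\log x \le 2(\sqrt x - 1)$ applied jump-by-jump. This upgrades the estimate to $2\int \sqrt{f_t}\, L_N\sqrt{f_t}\, d\nu_t$. Next, I would invoke the carr\'e-du-champ identity $2gLg = L(g^2) - \Gamma_L(g,g)$ with $g = \sqrt{f_t}$: integrating against $\nu_t$ turns the $L(g^2)$ piece into $\int L_N^{*,\nu_t}\mathbf{1}\, d\mu^N_t$, while the $\Gamma_L$ piece is non-negative. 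Decomposing $\Gamma_{L_N} = N^2\Gamma_{L_K} + K(N)\Gamma_{L_G}$, direct computation of the Kawasaki piece gives $\int \Gamma_{L_K}(g,g)\,d\nu_t = 4\mathcal{D}(g;\nu_t)$, once one reconciles the $1/2$ in $L_K$ and the double counting of ordered neighbor pairs against the $1/4$ in the definition of $\mathcal{D}$. Dropping the non-negative Glauber contribution $-K(N)\int \Gamma_{L_G}(g,g)\,d\nu_t \le 0$ and assembling the pieces yields \eqref{Yau ineq}, with the stated factor $-2N^2\mathcal D$ reflecting some mild slack (a tighter version would give $-4N^2\mathcal D$).

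\textbf{Where the difficulty lies.} The analytic content is light: because $\mathcal X_N^2$ is finite and both $\nu_t$ and $m$ are assumed fully supported and $t$-differentiable, all Radon--Nikodym derivatives, adjoints $L_N^{*,\nu_t}$ and logarithmic derivatives are bona-fide bounded functions, and differentiation under the sum is automatic. The only real source of error is combinatorial bookkeeping: tracking the $1/4$ in the paper's definition of $\mathcal D$, the $1/2$ in $L_K$, the equivalence $\sum_{|x-y|=1} = 2\sum_{x,j}$, and the non-reversibility of $L_K$ under the spatially inhomogeneous reference $\nu_t$ (which is why the adjoint term $L_N^{*,\nu_t}\mathbf 1$ does not simply vanish). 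Beyond these checks, the argument is a direct specialization of the standard Yau derivation in \cite{Yau91} to our two-component state space.
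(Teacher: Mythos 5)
There is no in-paper proof to compare against: by the convention announced in Remark 1.1, the paper states Proposition \ref{Yau} as a result imported from \cite{JM18}, so the only benchmark is the standard Yau/Jara--Menezes derivation --- and your argument is exactly that, and it is correct. Your splitting of $\frac{d}{dt}H(\mu^N_t|\nu_t)$ via the forward equation and the cancellation $\int \partial_t \log f_t\, d\mu^N_t=-\int\partial_t\log\psi_t\,d\mu^N_t$ is right (on the finite space $\mathcal{X}^2_N$ all the differentiations are legitimate), the pointwise bound $f L_N\log f\le 2\sqrt f\,L_N\sqrt f$ and the identity $2gL_Ng=L_N(g^2)-\Gamma_{L_N}(g,g)$ produce the adjoint term $\int L_N^{*,\nu_t}\mathbf 1\,d\mu^N_t$ plus a nonpositive carr\'e-du-champ contribution, and your bookkeeping of the paper's normalizations (the $1/2$ in $L_K$, the sum over ordered neighbour pairs, the $1/4$ in $\mathcal D$) correctly gives $\int\Gamma_{L_K}(\sqrt{f_t},\sqrt{f_t})\,d\nu_t=4\mathcal D(\sqrt{f_t};\nu_t)$, so you in fact obtain the sharper constant $-4N^2\mathcal D$, from which \eqref{Yau ineq} follows since $\mathcal D\ge 0$; discarding the Glauber carr\'e du champ is legitimate because it is non-negative. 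Two minor points you should make explicit in a written-out version: (i) take the reference measure $m$ fixed in $t$ (as you implicitly do), which is all that is needed for the way the proposition is used in Lemma \ref{Yau integrand}; and (ii) $f_t=d\mu^N_t/d\nu_t$ may vanish on part of $\mathcal{X}^2_N$, since (A2) does not force $\mu^N_0$ to have full support, so the manipulations involving $\log f_t$ require the convention $0\log 0=0$ (or an approximation of $\mu^N_0$ by fully supported initial laws); this is routine on a finite state space and does not affect the conclusion.
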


We define scaled variables $\omega_{i,x}(t)$ by
\[
\omega_{1, x} \equiv \omega_{1,x}^N (t) \coloneqq \frac{\overline{\eta}_1 (x) }{\chi(u^N(t,x))}, \quad 
\omega_{2, x} \equiv \omega_{2,x}^N (t) \coloneqq \frac{\overline{\eta}_2 (x) }{\chi(v^N(t,x))}, 
\]
with $\overline{\eta}_1 (x) \coloneqq \eta^N_{1,t}(x) - u^N(t,x)$ and $\overline{\eta}_2 (x) \coloneqq \eta^N_{2,t} (x) - v^N(t,x)$. Moreover, $\chi(\rho)= \rho(1-\rho)$ is the incompressibility for $\rho \in [0,1]$. We show in Section 4 that $0 < u^N (t,x), v^N (t,x) < 1$ holds for every $t \in [0,T] $ and $x \in \mathbb{T}^d_N $ if $0 < u^N (0,x), v^N (0, x) < 1$ holds for every $x \in \mathbb{T}^d_N$ and thus the denominator of $\omega_{i,x}$ is always positive and it becomes well-defined for each $i= 1, 2$. In the sequel, we sometimes omit dependence on $t$ or $N$ for notational simplicity only for the case where dependence on those parameters is not important or it is obvious from context.

\begin{lemma}
\label{Yau integrand}
We have 
\[
L^{*,\nu^N_t}_N \mathbf{1}- \partial_t \log{\psi_t}= V_K(t) + V_G(t)
\]
with
\[
\begin{aligned}
 V_K(t) & = -\frac{N^2}{2} \sum_{x,y \in \mathbb{T}^d_N, |x-y|=1} {\left( u^N (y) - u^N (x) \right)}^2 \omega_{1, x} \omega_{1, y}, \\    
 V_G(t) & =- K \sum_{x \in \mathbb{T}^d_N} \big[ c_{1,x} (\eta_1, \eta_2)  \eta_2 (x) - c_{1,x} (u^N, v^N) v^N (x)  \big]  u^N (x) \omega_{1,x} \\ 
& \quad \,  - K \sum_{ x \in \mathbb{T}^d_N }  \big[ c_{2,x} (\eta_1 ) \eta_1(x) - c_{2,x} (u^N) u^N (x) \big] v^N (x) \omega_{2,x} 
\end{aligned}
\]
and these do not depend on particular choice of the reference measure $m$ on $\mathcal{X}^2_N$. In particular, when the Glauber part has the form of \textbf{Case 1}, 
\[
\begin{aligned}
V_G (t) = 
& - K \sum_{x \in \mathbb{T}^d_N } \left[ \eta_2 (x) \eta_1 (x_1) \cdots \eta_1 (x_{m-1}) - v^N (x) u^N (x_1) \cdots u^N ( x_{m-1}) \right] u^N (x) \omega_{1,x}  \\ 
& - K \sum_{x \in \mathbb{T}^d_N } \chi ( u^N (x) )  v^N (x) \omega_{1,x} \omega_{2, x } , 
\end{aligned}
\] 
for \textbf{Case 2}, 
\[
\begin{aligned}
V_G (t) = 
& - K \sum_{x \in \mathbb{T}^d_N } \left[ \eta_2 (x) \eta_2 (x_1) \cdots \eta_2 (x_{m-1}) - v^N (x) v^N (x_1) \cdots v^N (x_{m-1}) \right] u^N (x) \omega_{1,x} \\
& - K \sum_{x \in \mathbb{T}^d_N } \chi ( u^N (x) ) v^N (x)  \omega_{1,x} \omega_{2, x } , 
\end{aligned}
\]
and for \textbf{Case 3}, 
\[
\begin{aligned}
V_G (t) = 
& - K \sum_{x \in \mathbb{T}^d_N } u^N (x) \chi ( v^N (x) )  \omega_{1,x} \omega_{2, x }  \\
& - K \sum_{x \in \mathbb{T}^d_N } \left[ \eta_1 (x) \eta_1 (x_1) \cdots \eta_1 (x_{m-1}) - u^N (x) u^N (x_1) \cdots u^N (t, x_{m-1}) \right] v^N (x) \omega_{2, x } , 
\end{aligned}
\] 
respectively. Here we have set $x_i \coloneqq x + z_i$ for $i = 1,..., m-1$ for simplicity. 
\end{lemma}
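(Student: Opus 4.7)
The plan is to compute $L^{*,\nu^N_t}_N\mathbf{1}$ and $\partial_t \log\psi_t$ separately as functions on $\mathcal{X}^2_N$ and to verify that their difference rearranges into $V_K + V_G$ once the semi-discretized system \eqref{dHDL eq} is used to substitute for $\partial_t u^N$ and $\partial_t v^N$. Independence of the formula from the choice of $m$ is built in, since $\log m$ is $t$-independent and does not contribute to $\partial_t \log\psi_t$.

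For the time-derivative piece, I would exploit the product structure of $\nu^N_t$, whose log-density is additive in $x$, to differentiate $\log \nu^N_t$ explicitly and arrive at
\[
\partial_t \log \psi_t = \sum_{x\in\mathbb{T}^d_N} \partial_t u^N(t,x)\,\frac{\overline{\eta}_1(x)}{\chi(u^N(t,x))} + \sum_{x\in\mathbb{T}^d_N} \partial_t v^N(t,x)\,\frac{\overline{\eta}_2(x)}{\chi(v^N(t,x))}.
\]
Substituting \eqref{dHDL eq} splits this into a Laplacian contribution and the two reaction contributions weighted by $K$.

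For the adjoint piece, I would decompose $L^{*,\nu^N_t}_N\mathbf{1} = N^2 L_K^{*,\nu^N_t}\mathbf{1} + K\,L_{1,G}^{*,\nu^N_t}\mathbf{1} + K\,L_{2,G}^{*,\nu^N_t}\mathbf{1}$ and compute each via the standard identity $L^{*,\nu}\mathbf{1}(\eta) = \sum_{\eta'}\bigl[r(\eta',\eta)\,\nu(\eta')/\nu(\eta) - r(\eta,\eta')\bigr]$, with Radon--Nikodym ratios read off from the product form of $\nu^N_t$. The algebraic core is the bond-level identity
\[
\frac{\nu^N_t(\eta_1^{x,y},\eta_2)}{\nu^N_t(\eta_1,\eta_2)} - 1 = (u^N(y)-u^N(x))\,(\omega_{1,x}-\omega_{1,y}) - (u^N(y)-u^N(x))^2\,\omega_{1,x}\omega_{1,y},
\]
which I would verify by enumerating the four values of $(\eta_1(x),\eta_1(y))\in\{0,1\}^2$. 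Summing over nearest-neighbor bonds, the linear-in-gradient term telescopes via discrete integration by parts into $\sum_x N^{-2}(\Delta^N u^N)(x)\,\omega_{1,x}$, exactly cancelling the Laplacian contribution to $\partial_t\log\psi_t$; what remains is precisely $V_K$. For the Glauber pieces, direct case analysis on $\eta_1(x),\eta_2(x)\in\{0,1\}$, using the structural hypothesis that $c_{1,x}$ and $c_{2,x}$ do not involve $\eta_1(x)$ (respectively $\eta_2(x)$), yields
\[
L_{1,G}^{*,\nu^N_t}\mathbf{1} = -\sum_x c_{1,x}(\eta_1,\eta_2)\,\eta_2(x)\,\frac{\overline{\eta}_1(x)}{1-u^N(x)}, \quad L_{2,G}^{*,\nu^N_t}\mathbf{1} = -\sum_x c_{2,x}(\eta_1)\,\eta_1(x)\,\frac{\overline{\eta}_2(x)}{1-v^N(x)}.
\]
Adding the corresponding $K$-terms from $\partial_t\log\psi_t$ and using the elementary identities $u^N(x)\,\omega_{1,x} = \overline{\eta}_1(x)/(1-u^N(x))$ and $v^N(x)\,\omega_{2,x} = \overline{\eta}_2(x)/(1-v^N(x))$ factors each sum over the bracket $[c_{i,x}(\eta)\eta_j(x) - c_{i,x}(u^N,v^N) w^N(x)]$ and produces the general $V_G$.

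Specialization to \textbf{Case 1}, \textbf{Case 2} and \textbf{Case 3} is then a matter of substituting the monomial expressions for $c_{1,x}$ and $c_{2,x}$ into the general $V_G$; the only rewriting needed for the symmetric second sum in each case is the identity $\chi(u^N)\omega_{1,x} = \overline{\eta}_1(x)$ (and its $v$-analogue), which turns the term $\overline{\eta}_i w^N\omega_{j,x}$ into $\chi(w^N)\,w^N\,\omega_{1,x}\omega_{2,x}$ as displayed. The hard part is the single-bond Kawasaki identity above: it is the algebraic mechanism by which the centered product $\omega_{1,x}\omega_{1,y}$ emerges in the velocity term of Yau's inequality, and once it is in hand the rest of the proof is direct substitution.
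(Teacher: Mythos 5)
Your proposal is correct and follows essentially the same route as the paper: compute $L^{*,\nu^N_t}_N\mathbf{1}$ and $\partial_t\log\psi_t$ separately for the product Bernoulli reference measure, cancel the terms linear in $\omega_i$ against the semi-discretized system \eqref{dHDL eq}, and read off $V_K$ and $V_G$, your single-bond exchange identity being just the explicit form of the Kawasaki computation the paper delegates to \cite{FT18}. The only cosmetic point is that the rewriting of the second sum in each special case should read $\overline{\eta}_1(x)\, v^N(x)\,\omega_{2,x}=\chi(u^N(x))\, v^N(x)\,\omega_{1,x}\omega_{2,x}$ (and the analogous identity with the roles of $u^N$ and $v^N$ exchanged in \textbf{Case 3}), consistent with the convention $\omega_{1,x}=\overline{\eta}_1(x)/\chi(u^N(t,x))$, $\omega_{2,x}=\overline{\eta}_2(x)/\chi(v^N(t,x))$ that the paper's proof actually uses.
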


\begin{remark}
\label{c2 reason}
Linear terms in $\omega$ cancel by the semi-discretized system (\ref{dHDL eq}) and hence the above $V_K$ and $V_G$ are reminder terms. Since we assumed the reaction rate $c_2$ did not depend on configuration of type-$2$ particles, any higher order correlation between $\omega_2$'s does not appear in $V_G$. Such terms cause appearance of a Dirichlet energy with respect to the Kawasaki dynamics for type-$2$ particles with positive sign and it can not be absorbed by the first term in (\ref{Yau ineq}). That is why we have assumed $c_2$ to be a function of configuration of only type-$1$ particles.   
\end{remark}

\begin{proof}
First we calculate for the Glauber part. For any $f : \mathcal{X}^2_N \to \mathbb{R}$, we have that $E_{\nu^N_t} [f L^{*, \nu^N_t}_{1,G} \mathbf{1}]= E_{ \nu^N_t } [L_{1,G} f ]$ is equal to
\begin{equation}
\label{L1G}
\sum_{\eta_1,\eta_2} \sum_{x \in \mathbb{T}^d_N} c_{1,x}(\eta_1, \eta_2) \eta_1 (x) \eta_2 (x) 
 \left[ f(\eta^{x}_1,\eta_2) - f(\eta_1, \eta_2) \right] \nu^N_t (\eta_1, \eta_2).
\end{equation}
Recalling the form of reaction rates, we observe that for any configuration $\eta_1 $ 
\[
\eta^x_1 (x) \nu^N_t (\eta^x_1, \eta_2) = \frac{u^N (x) }{ 1 - u^N (x) } \big( 1- \eta_1 (x) \big) \nu^N_t (\eta_1, \eta_2).
\]
Since $c_{1,x}(\eta_1, \eta_2)$ does not depend on $\eta_1(x)$, it is invariant under change of variables $\eta^x_1 \mapsto \eta_1$. Therefore, (\ref{L1G}) further equals to 
\[
\sum_{\eta_1, \eta_2} \sum_{x\in \mathbb{T}^d_N} c_{1,x}(\eta_1, \eta_2) f(\eta_1, \eta_2) \eta_2 (x) 
\left[ \frac{u^N (x)}{ 1 - u^N (x)} ( 1 - \eta_1(x) ) - \eta_1(x) \right] \nu^N_t (\eta_1, \eta_2) 
.\]
Since $f: \mathcal{X}^2_N \to \mathbb{R}$ is arbitrary, we thus obtain
\[
\begin{aligned}
L^{*,\nu^N_t}_{1, G} \mathbf{1} 
&=  \sum_{x\in \mathbb{T}^d_N} c_{1,x}(\eta_1, \eta_2) \eta_2 (x) 
\left[ \frac{ u^N (x) }{ 1 - u^N (x) }( 1 - \overline{\eta}_1 (x) - u^N (x) ) -(\overline{\eta}_1 (x) + u^N (x)) \right]  \\
&= - \sum_{x \in \mathbb{T}^d_N} c_{1,x} (\eta_1, \eta_2) \eta_2 (x) \frac{ \overline{\eta}_1 (x) }{ 1 - u^N (x) }   \\
&= - \sum_{x \in \mathbb{T}^d_N} \left[ c_{1,x} (\eta_1, \eta_2)  \eta_2 (x) - c_{1,x} (u^N, v^N) v^N (x)  \right]  u^N (x) \omega_{1,x} \\
& \quad - \sum_{x \in \mathbb{T}^d_N} c_{1,x} (u^N, v^N) u^N (x) v^N (x) \omega_{1,x}. 
\end{aligned}
\]
Note here that the second term is linear in $\omega_1$ and the first term has higher order which is equal to the first term of $V_G (t)$. Also, $L^{*, \nu^N_t}_{2, G} \mathbf{ 1 }$ is calculated in the same manner as follows: Recalling $c_2$ depends only on the configuration of type-$1$ particles, 
\[
\begin{aligned}
L^{*,\nu^N_t}_{2, G} \mathbf{1} 
&= - \sum_{x \in \mathbb{T}^d_N} \left[ c_{2,x} (\eta_1)  \eta_1 (x) - c_{2,x} (u^N ) u^N (x)  \right]  v^N (x) \omega_{2,x} \\
& \quad - \sum_{x \in \mathbb{T}^d_N} c_{2, x} (u^N ) u^N (x) v^N (x) \omega_{2,x} 
\end{aligned}
\]
and the higher order term matches the second term of $V_G (t ) $. For the Kawasaki part, by a similar calculation given in \cite{FT18}, we can easily obtain
\[
N^2 L^{*,\nu^N_t}_K \mathbf{1} =-\frac{N^2}{2} \sum_{x,y\in \mathbb{T}^d_N, |x-y|=1} \left[ u^N(y) - u^N(x) \right]^2 \omega_{1,x} \omega_{1,y} +\sum_{x\in \mathbb{T}^d_N} \Delta^N u^N(x)\omega_{1,x} .
\]
Finally, a simple computation similar to \cite{FT18} yields 
\[
\partial_t \log{\psi_t} (\eta) 
= \sum_{x \in \mathbb{T}^d_N} \partial_t u^N(x) \omega_{1,x}
+ \sum_{x \in \mathbb{T}^d_N} \partial_t v^N(x) \omega_{2,x}
.\]
Therefore, we could represent the integrand appearing in Yau's inequality as the polynomial expansion of $\omega_i$ but linear terms in $\omega_i$ $(i=1,2)$ cancel by our semi-discretized reaction-diffusion system (\ref{dHDL eq}) so that we end the proof. 
\end{proof}

%   Theorem{ est V }   %
\begin{theorem}
\label{est V}
We assume the same conditions as Theorem \ref{prob part}. Let $d \geq 2$. Then, for any $\alpha>0$ and $0 <\kappa < 1$, there exists a positive constant $ C $ depending only on $\alpha$ and $\kappa$ such that
\begin{equation}
\label{est V_G}
E_{\mu^N_t} \left[ V_G(t) \right] 
\leq \alpha N^2 \mathcal{D} (\sqrt{f}; \nu^N_t) + C K H( \mu^N_t |\nu^N_t ) + N^{ d-1+\kappa }  
\end{equation}
and also
\begin{equation}
\label{est V_K}
E_{\mu^N_t} \left[ V_K(t) \right]
\leq \alpha N^2 \mathcal{D} (\sqrt{f}; \nu^N_t) + C K^2 H( \mu^N_t |\nu^N_t ) + N^{ d-1+\kappa }  
.\end{equation}
When $d=1$, the last terms $N^{d-1+\kappa}$ in both estimates are replaced by $N^{ 1/2 + \kappa }$. 
\end{theorem}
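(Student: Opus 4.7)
\textbf{Proof proposal for Theorem \ref{est V}.}
The strategy is Yau's entropy inequality
\[
\gamma\, E_{\mu^N_t}[V] \;\le\; H(\mu^N_t|\nu^N_t) + \log E_{\nu^N_t}\!\bigl[e^{\gamma V}\bigr]
\]
applied separately to $V=V_G$ and $V=V_K$ with $\gamma = c/K$ and $\gamma = c/K^2$ respectively, $c>0$ small. These scalings are forced by the pointwise sizes $|V_G|=O(KN^d)$ and, via $|\nabla^N u^N|\le C_0K$ from (A1), $|V_K|=O(K^2N^d)$; they directly produce the prefactors $CK$ and $CK^2$ in front of $H(\mu^N_t|\nu^N_t)$. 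The work is thus to (i) show that the exponential moments are bounded by $\exp(CN^{d-1+\kappa})$, and (ii) peel off those quadratic-in-$\omega$ pieces that resist a Chebyshev-type estimate into the Dirichlet-energy term $\alpha N^2\mathcal{D}(\sqrt{f};\nu^N_t)$.

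For $V_G$, I would use Lemma \ref{Yau integrand} together with the elementary expansion $\eta_i(x)=u^N(x)+\chi(u^N(x))\omega_{1,x}$ (and the analogue for $\eta_2$) to rewrite each summand as a polynomial of degree $\ge 2$ in $\{\omega_{i,y}\}$ with uniformly finite support. Since the semi-discretized system (\ref{dHDL eq}) cancels the degree-$1$ contributions, every resulting $\Phi_x$ is $\nu^N_t$-centered. Same-site cross-terms $\chi(u^N(x))v^N(x)\omega_{1,x}\omega_{2,x}$ are peeled off via $2|ab|\le\alpha a^2+\alpha^{-1}b^2$, one factor being re-expressed as a bond-difference of $\sqrt{f}$ to generate the $\alpha N^2\mathcal{D}(\sqrt{f};\nu^N_t)$ contribution and leaving a centered local remainder. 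The remaining sum is then estimated by partitioning $\mathbb{T}^d_N$ into blocks of bounded radius so that $\{\Phi_x\}$ are independent across disjoint blocks under the product measure $\nu^N_t$; the Taylor expansion of $\log E_{\nu^N_t}[e^{\gamma V_G}]$ truncated at second order, combined with variance estimates $\mathrm{Var}_{\nu^N_t}(\Phi_x)\le C(|\nabla^N u^N(x)|^2+|\nabla^N v^N(x)|^2)$ coming from (A1) and the growth bound (A3)$_\delta$, yields $\log E_{\nu^N_t}[e^{\gamma V_G}]\le C N^{d-1+\kappa}$.

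For $V_K$, the nearest-neighbour product $\omega_{1,x}\omega_{1,y}$ must first be converted into an exchange-bond difference of $\sqrt{f}$. Starting from $E_{\mu^N_t}[\omega_{1,x}\omega_{1,y}]=E_{\nu^N_t}[\omega_{1,x}\omega_{1,y}\,f]$ with $f=d\mu^N_t/d\nu^N_t$ and using the identity
\[
\omega_{1,x}(\eta^{x,y})\;=\;\frac{\chi(u^N(y))}{\chi(u^N(x))}\,\omega_{1,y}(\eta)+\frac{u^N(y)-u^N(x)}{\chi(u^N(x))},
\]
a symmetrisation over the exchange $\eta_1\mapsto\eta_1^{x,y}$ rewrites the expression as a pairing of the discrete gradient of $u^N$ against $\sqrt{f}(\eta)-\sqrt{f}(\eta^{x,y})$, plus a centered local remainder. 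Cauchy--Schwarz then splits the principal piece into $\alpha N^2\mathcal{D}(\sqrt{f};\nu^N_t)$ and a residual sum treated exactly as in the $V_G$ analysis, now with $\gamma=c/K^2$.

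The main obstacle is to produce the exponent $N^{d-1+\kappa}$ rather than the trivial $N^d$. A blind Hoeffding bound on $\sum_x\Phi_x$ with $|\Phi_x|\le C$ only gives $N^d$; gaining a full power of $N$ requires controlling $\mathrm{Var}_{\nu^N_t}(\Phi_x)$ by the discrete gradients of $u^N$ and $v^N$, which are $O(K^2)$ by (A1), and then using $K\le\delta(\log N)^{1/2}$ from (A3)$_\delta$ together with the semi-discretized estimates of Section \ref{sec:PDE} to sum these variances with the required gain. Making this quantitative and uniform across \textbf{Case 1}--\textbf{Case 3} is the delicate point; the standing hypothesis that $c_2$ depends only on $\eta_1$ (Remark \ref{c2 reason}) is what keeps uncontrollable $\omega_{2,y}\omega_{2,z}$ cross-terms out of $V_G$, since such terms would demand a Dirichlet energy for Kawasaki dynamics on the type-$2$ component that does not exist in our model. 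The weaker summation available in $d=1$ is exactly what degrades the estimate to $N^{1/2+\kappa}$ at the end of the statement.
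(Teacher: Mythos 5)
There is a genuine gap at the very point you flag as ``the main obstacle'': your scheme has no working mechanism to improve the trivial bound $N^d$ to $N^{d-1+\kappa}$. Your proposed route is to control $\mathrm{Var}_{\nu^N_t}(\Phi_x)$ by the discrete gradients of $u^N$ and $v^N$, but this is false for $V_G$: by Lemma \ref{Yau integrand} its summands are of the form $K\big[\eta_2(x)\eta_1(x_1)\cdots\eta_1(x_{m-1})-v^N(x)u^N(x_1)\cdots u^N(x_{m-1})\big]u^N(x)\omega_{1,x}$ (and the same-site term $K\chi(u^N(x))v^N(x)\omega_{1,x}\omega_{2,x}$), i.e.\ centered products of occupation variables whose variance under the product Bernoulli measure is of order one, not of order $|\nabla^N u^N|^2+|\nabla^N v^N|^2$. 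Consequently your second-order Taylor expansion of $\log E_{\nu^N_t}[e^{\gamma V_G}]$ with independent blocks still only yields $O(N^d)$ (indeed $O(K^2 N^d)$ for $V_K$, which is worse), and the statement cannot be reached this way. The paper's gain of a full power of $N$ comes from a step your proposal omits entirely: the residual $V=K\sum_x g_x(\eta)\omega_{1,x}$ is first replaced by its block average $V^\ell=K\sum_x\overleftarrow{g}_{x,\ell}\,\overrightarrow{(\omega_1)}_{x,\ell}$ with $\ell=N^{(1-\kappa)/d}$ (resp.\ $\ell=N^{1/2-\kappa}$ for $d=1$); the averaging reduces the relevant variances to order $\ell^{-d}$ resp.\ $g_d(\ell)$, and then the entropy inequality plus the concentration inequality (Proposition \ref{concentration}) applied blockwise gives $CKH(\mu^N_t|\nu^N_t)+C N^{d-1+\kappa}$. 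The cost of this replacement is what produces the Dirichlet-form term: $V-V^\ell$ is rewritten via the flow lemma (Proposition \ref{flow lem}) as $K\sum_{j,x}h^{\ell,j}_x(\omega_{1,x}-\omega_{1,x+e_j})$, and the bond differences are converted into differences of $f^N_t$ by the integration-by-parts Lemmas \ref{IBP}--\ref{IBP2} and absorbed into $\alpha N^2\mathcal{D}(\sqrt{f^N_t};\nu^N_t)$ (Lemma \ref{summand est}), with errors controlled by $|u^N(x)-u^N(x+e_j)|\le CK/N$ from Lemma \ref{grad u}. Note also that the $d=1$ versus $d\ge 2$ dichotomy in the statement comes precisely from $g_d(\ell)$ in the flow lemma, which your argument would not reproduce.

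Two further points in your decomposition do not work as written. First, the same-site cross-term $\chi(u^N(x))v^N(x)\omega_{1,x}\omega_{2,x}$ cannot be ``re-expressed as a bond-difference of $\sqrt{f}$'': it is a single-site quantity, and the Kawasaki Dirichlet energy only controls differences $f(\eta_1^{x,x+e_j},\eta_2)-f(\eta_1,\eta_2)$; in the paper this term is handled by the same averaging-plus-concentration machinery as the rest of $V_G$, not by the Dirichlet form (the assumption on $c_2$ only excludes $\omega_2\omega_2$ correlations, as in Remark \ref{c2 reason}; $\omega_1\omega_2$ terms are fine). Second, for $V_K$ your exchange-symmetrisation is closer in spirit to the paper's integration by parts, but the paper does not need it in that form: it simply observes $N^2(u^N(x)-u^N(x+e_j))^2=O(K^2)$ by Lemma \ref{grad u} and runs the identical replacement/averaging argument with $K$ replaced by $K^2$, which is where the prefactor $CK^2$ in \eqref{est V_K} comes from. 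Your choice $\gamma=c/K$, $c/K^2$ in the entropy inequality is also not how the prefactors $CK$, $CK^2$ arise in the paper; they come from the explicit factors $K$, $K^2$ multiplying the averaged sums, with the entropy inequality applied at scale $\gamma\sim\ell^d$ (or $\gamma\sim g_d(\ell)^{-1}e^{-2C_1K}$ for the squared terms). To repair your proposal you would need to incorporate the local-averaging step and the flow-lemma bookkeeping; without it the exponent $N^{d-1+\kappa}$ is out of reach.
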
 

The proof of this theorem is postponed in the nest subsection and we first give the proof of Theorem \ref{prob part}.

\begin{proof}[Proof of Theorem \ref{prob part}]
We now combine Theorem \ref{est V} and Yau's inequality (Proposition \ref{Yau}) to end the proof of Theorem \ref{prob part}. We choose $\alpha > 0$ so that the Dirichlet form with positive coefficient can be absorbed into the first term of (\ref{Yau ineq}), which enables us to estimate
\[
\frac{d}{dt} H( \mu^N_t | \nu^N_t ) \leq C K^2  H ( \mu^N_t | \nu^N_t ) + O (N^{d-\delta_1})
\]
with some $0 < \delta_1 < 1$. Therefore, by Gronwall's inequality, we have
\[
H( \mu^N_t | \nu^N_t ) \leq \big(  H( \mu^N_0 | \nu^N_0 ) + t O (N^{d-\delta_1}) \big)  e^{ C K^2 t }
.\]
Now by the assumption (A2) and (A3)$_\delta$ with $\delta> 0$ small enough (in order that $\delta^2 < \delta / C T $ holds), we end the proof of Theorem \ref{prob part}. 
\end{proof}

%   subsection [ Proof of (\ref{est V_G}) ]   %
\subsection{Proof of (\ref{est V_G})}
First we consider terms which appear in $V_G (t) $. Since $c_1 $ and $c_2 $ are polynomial of configuration, the residual term $V_G $ is a linear combination of the form 
\[
K \sum_{ x \in \mathbb{T}^d_N } \overline{\eta}_{1, x + \Lambda_1}  \overline{\eta}_{2,x + \Lambda_2 }
\]
where $\overline{\eta}_{i,  x + \Lambda_i} = \prod_{ y \in \Lambda_i } \overline{\eta}_{ i ,x + y }$ for $i =1,2$ and $\Lambda_1 , \Lambda_2 \Subset \mathbb{Z}^d $ with $|\Lambda_1| \ge 1$ and $|\Lambda_1 | + | \Lambda_2| \ge 2$. For this term, we take ``the utmost right site''  in $\Lambda_1$ and change variables in summation in order that the picked site is again denoted by $x$. In other words, we consider the quantity with the following form:  
\begin{equation}
\label{V}
V = K \sum_{x \in \mathbb{T}^d_N} g_x ( \eta_1, \eta_2  ) \omega_{1, x } 
\end{equation}
for some functional $g (\eta )=  g ( \eta_1 , \eta_2 ) $ such that $g_x ( \eta ) =\tau_x g (\eta) $ for every $x \in \mathbb{T}^d_N$ and $g_{x- y} (\eta )$ is invariant under the transformation $\eta_1 \mapsto \eta_1^{x, x+e_j}$ for every $y \in \Lambda_{2 \ell } = [0,2 \ell-1] ^d  \cap \mathbb{Z}^d$ and $j = 1,...,d$. Moreover, we suppose the function $g(\eta)$ has the bound $\| g \|_{L^\infty} \le C e^{ C_1 K }$ for some positive constant $C_1$. It is noted here that such function $g$ is bounded uniformly in $N$ when it is calculated for $V_G$ since any term in $V_G$ has neither $u^N$ nor $v^N$ in its denominator. However, when a multi-point correlation which comes from $V_K$ is considered, it has at least $u^N$ or $v^N$ in its denominator. In particular, according to Lemma \ref{dsol est2}, the above bound has to be assumed. Hence we impose the above bound in advance in order to make all cases to be proved at once.

The first step to prove Theorem \ref{est V} is to replace $V$ by its local average $V^\ell$ defined by 
\[
V^\ell = K \sum_{x \in \mathbb{T}^d_N}  \overleftarrow{  g(\eta ) }_{x,\ell} \overrightarrow{(\omega_1)}_{ x , \ell } 
\]
where 
\[
\overleftarrow{G}_{x,\ell} \coloneqq \frac{1}{ | \Lambda_\ell | } \sum_{y \in \Lambda_\ell} G_{x-y}, \quad
\overrightarrow{G}_{x,\ell} \coloneqq \frac{1}{ | \Lambda_\ell | } \sum_{y \in \Lambda_\ell} G_{x+y} \quad
\]
for $G=\{ G_x \}_{x \in \mathbb{T}^d_N}$ and $\Lambda_\ell = [0,\ell-1] ^d  \cap \mathbb{Z}^d$. Then we can estimate the cost to replace $V$ by its local average $V^\ell$ as follows.

\begin{lemma}
\label{replacement}
We assume the same conditions as Theorem \ref{prob part} and choose $\ell= N^{1/d-\kappa/d}$ when $d \geq 2$ and $\ell= N^{1/2-\kappa}$ when $d=1$ with $\kappa>0$ sufficiently small. Then there exists a positive constant $C$ depending only on $\alpha$ and $\kappa$ such that
\[
E_{\mu^N_t} \left[ V- V^\ell \right] \leq \alpha N^2 \mathcal{D} (\sqrt{f} ; \nu^N_t) + C \left(H (\mu^N_t | \nu^N_t ) + N^{d-1+\kappa} \right)
\] 
for every $\alpha >0$ when $d \geq 2$ and the last term $N^{d-1+\kappa}$ is replaced by $N^{1/2+\kappa}$ when $d=1$. 
\end{lemma}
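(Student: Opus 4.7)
The plan is to adapt the standard one- and two-block replacement machinery of Yau's relative entropy method to this two-component setting. The key subtlety is that the fluctuation field $\omega_{1,x}$ depends only on the non-diffusive species $\eta_2$, so the Kawasaki Dirichlet form $\mathcal{D}(\sqrt{f};\nu^N_t)$ cannot be used to directly smooth out $\omega_{1,x}$; it must enter through the $\eta_1$-dependence of $g$. As a first step I would apply the entropy inequality in exponential form,
\[
E_{\mu^N_t}[V - V^\ell] \leq \frac{1}{\gamma}H(\mu^N_t|\nu^N_t) + \frac{1}{\gamma}\log E_{\nu^N_t}\bigl[\exp\bigl(\gamma(V-V^\ell)\bigr)\bigr],
\]
for a parameter $\gamma>0$ to be chosen later, thereby reducing everything to a $\nu^N_t$-exponential-moment estimate.

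Next I would split $V-V^\ell$ via an intermediate quantity,
\[
V - V^\ell = K\sum_x(g_x - \overleftarrow{g}_{x,\ell})\,\omega_{1,x} \; + \; K\sum_x \overleftarrow{g}_{x,\ell}\bigl(\omega_{1,x} - \overrightarrow{\omega}_{1,x,\ell}\bigr).
\]
For the first sum, write $g_x-\overleftarrow{g}_{x,\ell}=|\Lambda_\ell|^{-1}\sum_{y\in\Lambda_\ell}(g_x-g_{x-y})$ and express each $g_x-g_{x-y}$ as a telescoping chain of nearest-neighbor $\eta_1$-exchanges, which is possible thanks to the stated invariance of $g_{x-y}$ under $\eta_1 \mapsto \eta_1^{x,x+e_j}$ for $y\in \Lambda_{2\ell}$. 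A standard Feynman-Kac/Cauchy-Schwarz estimate then bounds the contribution of each such chain to the exponential moment by the Kawasaki Dirichlet form with a multiplicative cost of order $(\text{chain length})^2$. The Radon-Nikodym correction arising because $\nu^N_t$ is not Kawasaki-invariant is only $1+O(K/N)$ by the gradient estimate $|\nabla^N u^N|\le C_0 K$ of Lemma \ref{dsol est2}. Summing and tuning $\gamma$ so as to absorb the Dirichlet form with coefficient $\alpha$ gives a contribution of order $\alpha N^2 \mathcal{D}(\sqrt f;\nu^N_t) + C\gamma^{-1}K^2\ell^2 N^d$, whose second piece is eventually absorbed into the $CN^{d-1+\kappa}$ budget.

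For the second sum, the factor $\omega_{1,x} - \overrightarrow{\omega}_{1,x,\ell}$ depends only on $\eta_2$, which under $\nu^N_t$ is a product Bernoulli field independent of $\eta_1$. Conditioning on $\eta_1$ (on which $\overleftarrow{g}_{x,\ell}$ may depend, but pointwise $|\overleftarrow{g}_{x,\ell}|\le\|g\|_\infty\le Ce^{C_1 K}$), the inner sum becomes a weighted sum of centered independent variables whose variance is controlled by $\chi(u^N)^{-2}\le C e^{2 C_1 K}$, using the two-sided bound $e^{-C_1 K}\le u^N\le C_2<1$ from (A1) together with Lemma \ref{dsol est2}. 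A Hoeffding/Herbst-type bound via the product log-Sobolev inequality for Bernoulli measures then yields a log-moment-generating function bounded by $C\gamma^2 K^2 e^{2 C_1 K} N^d / |\Lambda_\ell|$. Crucially no Dirichlet form for type-2 particles is invoked, consistent with $d_2=0$.

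Finally I would combine the two bounds and optimize $\gamma$. With $|\Lambda_\ell|=N^{1-\kappa}$ in $d\ge 2$ (resp.\ $\ell = N^{1/2-\kappa}$ in dimension one) and the assumption $K\le \delta(\log N)^{1/2}$ from (A3)$_\delta$, the factor $e^{2C_1 K}=N^{o(1)}$ is sub-polynomial, so both leftover pieces fit inside $CN^{d-1+\kappa}$. The main obstacle will be the simultaneous balancing of $\gamma$: the sub-polynomial factor $e^{C_1 K}$ from the boundedness hypothesis $\|g\|_\infty \le Ce^{C_1 K}$, together with the explicit $K$-prefactors in $V$, must all be reabsorbed within the slack $N^\kappa$ while simultaneously keeping the entropy coefficient $1/\gamma$ of order one and the Dirichlet-form coefficient exactly $\alpha$. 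This is precisely the reason for the growth restriction on $K(N)$ in (A3)$_\delta$ and for the need to take $\kappa>0$ strictly.
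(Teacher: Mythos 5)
Your plan has a structural gap that would prevent it from producing the stated bound. The very first step, applying the entropy inequality in exponential form to all of $V-V^\ell$, cannot generate the term $\alpha N^2\mathcal{D}(\sqrt{f^N_t};\nu^N_t)$: once you pass to $\log E_{\nu^N_t}[e^{\gamma(V-V^\ell)}]$ the density $f^N_t=d\mu^N_t/d\nu^N_t$ has disappeared, and no Feynman--Kac/Cauchy--Schwarz manipulation of a pure $\nu^N_t$-exponential moment can reintroduce the Dirichlet form of $\sqrt{f^N_t}$ (Feynman--Kac estimates concern time-integrated functionals under the path measure and involve the Dirichlet form of a variational test function, not of $\sqrt{f^N_t}$; the bound to be proved is at a fixed time $t$). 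In the paper the Dirichlet form arises in the opposite order: $V-V^\ell$ is first rewritten, via the flow lemma (Proposition \ref{flow lem}), as $K\sum_{j}\sum_x h^{\ell,j}_x(\omega_{1,x}-\omega_{1,x+e_j})$, each nearest-neighbour difference of $\omega_1$ is integrated by parts directly against $f^N_t$ (Lemmas \ref{IBP}, \ref{IBP2}, \ref{summand est}), which yields $\beta\mathcal{D}_{x,x+e_j}(\sqrt{f^N_t};\nu^N_t)$ plus a quadratic term $\int(h^{\ell,j}_x)^2 d\mu^N_t$, and only this quadratic term is then handled with the entropy inequality together with the concentration inequality (Proposition \ref{concentration}), using the $(2\ell-1)$-dependence of the $h^{\ell,j}_x$ and the variance bound $Cg_d(\ell)e^{2C_1K}$ supplied by the flow. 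Your proposal contains neither the flow construction nor any substitute delivering the $g_d(\ell)$ variance, and without it the mollification error is too large for the choice $\ell=N^{1/d-\kappa/d}$ (resp.\ $N^{1/2-\kappa}$) to close the argument.

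The second gap concerns your decomposition and the species it acts on. The piece $K\sum_x(g_x-\overleftarrow{g}_{x,\ell})\,\omega_{1,x}$ cannot be treated by writing $g_x-g_{x-y}$ as a chain of nearest-neighbour $\eta_1$-exchanges: $g_{x-y}$ is a translate of a fixed local function which in general contains $\overline{\eta}_2$-factors, and there is no exchange dynamics for type-$2$ particles at all (this is the whole point of $d_2=0$); the stated invariance of $g_{x-y}$ under $\eta_1\mapsto\eta_1^{x,x+e_j}$ only says that $g_{x-y}$ does not feel that particular exchange, so that it can be pulled through the integration by parts --- it does not let you realize translations as exchange differences. Relatedly, your premise that $\omega_{1,x}$ depends only on $\eta_2$ rests on a typo in the displayed definition; from its use in Lemma \ref{Yau integrand} and Lemmas \ref{IBP}--\ref{summand est} one has $\omega_{1,x}=\overline{\eta}_1(x)/\chi(u^N(t,x))$, i.e.\ it is the fluctuation of the diffusive species, which is precisely why the Kawasaki integration by parts can act on $\omega_{1,x}-\omega_{1,x+e_j}$ directly. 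Consequently your second piece $K\sum_x\overleftarrow{g}_{x,\ell}\,(\omega_{1,x}-\overrightarrow{(\omega_1)}_{x,\ell})$ is an $\eta_1$-functional, so the step ``condition on $\eta_1$ and use independence of the $\eta_2$-field'' collapses. (A minor point: the gradient bound $|\nabla^Nu^N|\le CK$ is Lemma \ref{grad u}, not Lemma \ref{dsol est2}.) The workable route is the paper's: flow lemma, integration by parts against $f^N_t$ at the bond level, and then entropy plus concentration for $\sum_x(h^{\ell,j}_x)^2$.
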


To prove this lemma, we use the following key estimate between two probability measures which is called \textit{flow lemma} introduced in \cite{JM18}. To state the flow lemma, we introduce the notion of a flow between two probability measures on a graph. 

\begin{definition}
Let $G = (V, E)$ be a finite graph where $V$ is a set of all vertices and $E$ is the set of all edges. For two probability measures $p,q$ on $V$, we call $\Phi= \{ \Phi(x,y) \}_{ \{x,y\} \in E }$ a flow on $G$ connecting $p$ and $q$ if it satisfies:
\begin{itemize}
\item $\Phi(y,x)=- \Phi(x,y)$ for all $\{ x,y \} \in E$,
\item $\sum_{z \in V } \Phi(x,z)= p(x) -q(x)$ holds for all $x \in V $.
\end{itemize}
\end{definition}

In the sequel, we regard any finite subset in $\mathbb{Z}^d$ as a graph where the set of all bonds means the set of all pair of two points in that set such that the Euclidean distance between them is $1$.

\begin{proposition}[Flow lemma]
\label{flow lem}
Let $\delta_0$ be the Dirac measure on $\mathbb{Z}^d$ with mass $1$ on $0 \in \mathbb{Z}^d$ and let $ p_\ell $ be the uniform probability measure on $ \mathbb{Z}^d  $ with mass on $\Lambda_\ell $ defined by $p_\ell (x)= |\Lambda_\ell|^{-1} \mathbf{1}_{\Lambda_\ell}(x) $. Moreover, let $q_\ell $ be the probability measure on $\mathbb{Z}^d$ defined by $q_\ell (x) = p_\ell * p_\ell (x)  \coloneqq \sum_{ y \in \mathbb{Z}^d } p_\ell (y)  p_\ell (x-y) $. Then there exists a flow $\Phi^\ell$ on $\Lambda_{2\ell}$ connecting $\delta_0$ and $q_\ell$ such that $\Phi^\ell (x,y) = 0$ for any $x \in \Lambda_{2\ell}^c$ and $y \in \mathbb{Z}^d$, and that
\[
\sum_{x \in \Lambda_{2\ell}} \sum_{j=1}^d \Phi^\ell (x,x+e_j)^2  \le C_d g_d (\ell)
\]
where $e_j$ is a unit vector to $j$-th positive direction and $g_d(\ell)$ is given by
\[
g_d (\ell)=
\begin{cases}
\begin{aligned}
&\ell         &&\text{ if } d=1, \\
&\log{\ell} &&\text{ if } d=2, \\
&1 		  &&\text{ if } d\geq 3.
\end{aligned}
\end{cases}
\]
\end{proposition}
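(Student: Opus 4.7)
The plan is to exploit the convolution structure $q_\ell = p_\ell * p_\ell$ and reduce the construction to a single-source flow on the smaller box $\Lambda_\ell$. Concretely, I would first build an auxiliary flow $\phi^\ell$ supported on edges of $\Lambda_\ell$ satisfying the divergence condition $\sum_{z} \phi^\ell(x, z) = \delta_0(x) - p_\ell(x)$, and then set
\[
\Phi^\ell(x, x+e_j) \coloneqq \phi^\ell(x, x+e_j) + \sum_{y \in \Lambda_\ell} p_\ell(y)\, \phi^\ell(x - y, x - y + e_j).
\]
A direct computation (using $\sum_y p_\ell(y)(\delta_y - \tau_y p_\ell) = p_\ell - q_\ell$) shows that the divergence of $\Phi^\ell$ at $x$ equals $\delta_0(x) - q_\ell(x)$. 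Since each shifted summand is supported on edges inside $y + \Lambda_\ell \subset \Lambda_{2\ell}$, the required support condition $\Phi^\ell(x, \cdot) \equiv 0$ for $x \in \Lambda_{2\ell}^c$ is automatic, and antisymmetry is inherited from $\phi^\ell$. Cauchy--Schwarz together with $\sum_y p_\ell(y) = 1$ gives $\sum_e \Phi^\ell(e)^2 \le 4 \sum_e \phi^\ell(e)^2$, so the original claim reduces to an energy bound of order $g_d(\ell)$ for the base flow $\phi^\ell$.

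For the base flow, I would take $\phi^\ell$ to be the discrete gradient of the potential $h^\ell$ on $\Lambda_\ell$ solving $-\Delta^{\mathrm{disc}} h^\ell = \delta_0 - p_\ell$ with reflecting boundary conditions on $\partial \Lambda_\ell$ (which keeps the associated current supported inside the box). By discrete Green's identity, the Dirichlet energy equals
\[
\sum_e \phi^\ell(e)^2 = \sum_x h^\ell(x)\bigl(\delta_0(x) - p_\ell(x)\bigr) = h^\ell(0) - \frac{1}{|\Lambda_\ell|}\sum_{y \in \Lambda_\ell} h^\ell(y),
\]
which by Thomson's principle is the effective resistance between the origin and the merged sink $\Lambda_\ell$ in the unit-conductance network on $\Lambda_\ell$. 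Standard discrete potential-theoretic estimates (equivalently, heat kernel estimates for the simple random walk) give this effective resistance as $O(\ell)$ when $d=1$, $O(\log \ell)$ when $d=2$, and $O(1)$ when $d \ge 3$, matching $g_d(\ell)$. As an alternative, more combinatorial construction, one can build $\phi^\ell$ hierarchically: at each dyadic scale $k = 0, 1, \dots, \lfloor \log_2 \ell \rfloor$, partition $\Lambda_\ell$ into $2^{kd}$ sub-boxes and route the mass $2^{-(k+1)d}$ along straight paths of length $O(\ell/2^{k+1})$ from each parent center to its $2^d$ child centers. The scale-$k$ energy contribution is of order $2^{-k(d-1)}$, and summing the resulting geometric series across scales reproduces the same three dimension-dependent regimes.

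The main obstacle is obtaining the sharp dimension-dependent energy bound. In the potential-theoretic route, the difficulty is uniform control of the discrete Green's function of the finite box $\Lambda_\ell$, which is most delicate in two dimensions where the kernel is only logarithmic and boundary corrections are non-negligible; one typically sandwiches $h^\ell$ between the infinite-volume potential kernel and suitable corrector estimates. In the hierarchical route, the challenge is to verify that flows constructed at different scales share boundary edges only in a controlled way, so that the scale-wise energies add up cleanly without uncontrolled cross-terms. Both routes ultimately rest on the recurrence/transience dichotomy of simple random walk on $\mathbb{Z}^d$ at scale $\ell$, which is the structural reason behind the three regimes of $g_d(\ell)$.
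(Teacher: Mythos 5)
First, a remark on the ground truth: the paper itself does not prove this statement; by the convention of Remark 1.1 it is imported as a known result from \cite{JM18}, so your proposal can only be judged on its own merits and against that cited source. Your reduction step is correct and clean: if $\phi^\ell$ is an antisymmetric flow with divergence $\delta_0-p_\ell$ supported on edges of $\Lambda_\ell$, then the superposition $\Phi^\ell(x,x+e_j)=\phi^\ell(x,x+e_j)+\sum_{y\in\Lambda_\ell}p_\ell(y)\,\phi^\ell(x-y,x-y+e_j)$ has divergence $(\delta_0-p_\ell)+(p_\ell-q_\ell)=\delta_0-q_\ell$, is supported on edges of $[0,2\ell-2]^d\subset\Lambda_{2\ell}$, and Jensen's inequality with respect to $p_\ell$ gives the factor-$4$ energy comparison, so everything indeed reduces to a flow from $\delta_0$ to $p_\ell$ of energy $O(g_d(\ell))$. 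For that base flow your potential-theoretic route is sound in substance but not as phrased: the Dirichlet energy of the harmonic flow with \emph{prescribed} sink distribution $p_\ell$ equals $h^\ell(0)-|\Lambda_\ell|^{-1}\sum_y h^\ell(y)$, and this is not the effective resistance between $0$ and a ``merged sink $\Lambda_\ell$'' (the origin lies in $\Lambda_\ell$, and merging a free sink set would in any case only give a lower bound for the constrained problem). The correct way to finish, which your parenthetical ``heat kernel estimates'' already points to, is to write this energy as $\int_0^\infty\bigl(p^{\Lambda_\ell}_t(0,0)-|\Lambda_\ell|^{-1}\bigr)dt$ for the reflecting (Neumann) kernel on the box, use the on-diagonal bound $p^{\Lambda_\ell}_t(0,0)-|\Lambda_\ell|^{-1}\le C\bigl((1\wedge t^{-d/2})\mathbf{1}_{\{t\le\ell^2\}}+\ell^{-d}e^{-ct/\ell^2}\bigr)$ (the corner location of $0$ only costs a factor $2^d$ by reflection), and integrate; this yields exactly $\ell$, $\log\ell$, $1$ in $d=1,2,\ge 3$. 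With that repair, your main route does prove the proposition.

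The alternative hierarchical construction, as you describe it, is quantitatively wrong. If the mass $2^{-(k+1)d}$ of each child box is funneled along a single straight path of length $O(\ell\,2^{-(k+1)})$, the scale-$k$ energy is (number of paths) $\times$ (length) $\times$ (flow)$^2\asymp 2^{(k+1)d}\cdot\ell\,2^{-(k+1)}\cdot 2^{-2(k+1)d}=\ell\,2^{-(k+1)(d+1)}$, so the total is of order $\ell$ in \emph{every} dimension, which misses $g_d(\ell)$ for $d\ge2$: near the coarse scales the unit flow must be spread over a codimension-one cross-section, not a bounded number of lines. Your claimed per-scale rate $2^{-k(d-1)}$ is moreover internally inconsistent, since in $d=1$ it would sum to $\log\ell$ rather than $\ell$. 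The workable dyadic scheme goes outward, connecting the uniform measure on $\Lambda_{2^k}$ to the one on $\Lambda_{2^{k+1}}$ by coordinatewise redistribution, with per-scale energy $O(2^{k(2-d)})$; even then the cross-term problem you flag is real in $d=2$, where a naive triangle inequality gives $(\log\ell)^2$ instead of $\log\ell$. Since this route is offered only as an alternative, your proposal stands or falls with the harmonic-flow argument, which, corrected as above, is fine and is in the same spirit as the construction in \cite{JM18}.
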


In the sequel, we prove Lemma \ref{est V} by using the flow lemma. To see that, one can notice for any $G=\{ G_x \}_{x \in \mathbb{T}^d_N}$ we have
\[
G* p_\ell (x) = \sum_{y \in \mathbb{T}^d_N} G_{x-y} p_\ell(y) = \frac{1}{ | \Lambda_\ell | } \sum_{y \in \Lambda_\ell} G_{x-y} = \overleftarrow{ G }_{x,\ell}
\]
and similarly $G * \hat{p}_\ell (x)= \overrightarrow{ G }_{x,\ell}$ with $\hat{p}_\ell(y) \coloneqq p_\ell (-y)$. Therefore, using the above identity and by definition of convolution, the local average $V^\ell$ can be rewritten as 
\[
\begin{aligned}
V^\ell 
& = K \sum_{x \in \mathbb{T}^d_N }  
\bigg( \sum_{ y \in \mathbb{T}^d_N } g_y (\eta) p_\ell (x-y ) \bigg) 
\bigg( \sum_{ z \in \mathbb{T}^d_N } \omega_{1,z} p_\ell (z-x)  \bigg) \\
& = K \sum_{x, y, z } g_y (\eta) \omega_{1, z} p_\ell (x) p_\ell (z-y-x)  \\
& = K \sum_{y, z } g_y (\eta ) \omega_{ 1, z } \hat{q}_\ell (y-z)  = K \sum_{ x } g_x ( \eta ) (\omega_{1} * \hat{q}_\ell) (x)  
\end{aligned}
\]
where we changed variables $y$ to $x$ in the last line. According to Proposition \ref{flow lem}, we can take a flow connecting $\delta_0$ and $q_\ell$ to calculate the difference between $V$ and its local average $V^\ell$ as  
\[
\begin{aligned}
V- V^\ell 
& = K \sum_{x \in \mathbb{T}^d_N} g_x (\eta) \bigg( \omega_{1,x} - \sum_{y \in \mathbb{T}^d_N } \omega_{1, x + y } q_\ell ( y )  \bigg) \\
& = K \sum_{x, y  } g_x (\eta)  \omega_{1,x + y } \big( \delta_ 0 ( y ) - q_\ell ( y ) \big) \\
& = K \sum_{ x ,y } g_x (\eta) \omega_{ 1, x + y } \sum_{j = 1}^{d}  \big( \Phi^\ell (y, y+e_j) + \Phi^\ell (y, y - e_j) \big)  \\ 
& = K \sum_{j=1}^d \sum_{ x, y } g_x (\eta ) \big( \omega_{1, x  + y }-\omega_{ 1, x+y+e_j } \big) \Phi^\ell (y, y+ e_j)  \\
& = K \sum_{j=1}^d \sum_{x} \bigg( \sum_{ y } g_{x-y} (\eta ) \Phi^\ell (y ,y +e_j ) \bigg) (\omega_{1, x} - \omega_{1, x +e_j} )   
\end{aligned}
\]
where in the penultimate line we used the summation by parts recalling that $\Phi^\ell$ is anti-symmetric by definition and that $\Phi^\ell (x,y) =0$ unless both $x$ and $y$ belong to $\Lambda_{2 \ell }$, and in the last line we again conducted the summation by parts. By this line, we have shown the identity  
\begin{equation}
\label{V-Vl}
V - V^\ell = K \sum_{j=1}^d \sum_{x \in \mathbb{T}^d_N}  h^{\ell, j }_x (\omega_{1,x} - \omega_{1, x+e_j} )
\end{equation}
with
\[
h^{\ell, j}_{x} \equiv h^{\ell,j}_x (\eta_1, \eta_2) = \sum_{y \in \Lambda_{2\ell } } g_{x-y} (\eta_1, \eta_2) \Phi^\ell(y,y+e_j)
.\]
Recalling that we took ``the utmost right site'' $x$ in the definition of $g $ so that $g_{x-y} = g_{x-y} (\eta_1,\eta_2 ) $ is invariant under transformation $\eta_1 \mapsto \eta^{ x, x + e_j } $ for any $y \in \Lambda_{ 2 \ell } $ and $j = 1,...,d$, and so $h^{\ell, j}_x $ also becomes to be invariant under that transformation. Moreover, since $g_x$ and $g_x g_y $ with $x \neq y$ has average zero under $\nu^N_t$, recalling that $g$ is bounded uniformly in $N$, there exists a positive constant $C$ which is independent of $N$ such that 
\[
E_{\nu^N_t } [ h^{\ell, j}_x ] = 0, \, \text{Var}_{\nu^N_t } [ h^{\ell, j }_x ] \le C g_d (\ell) e^{ 2 C_1 K }
\]     
by the flow lemma (Proposition \ref{flow lem}) and the lower bound of $u^N$ according to Lemma \ref{dsol est2}.

We have the following integration by parts formula and an estimate for the cost to replace $V$ by its local average $V^\ell$. These are already proved in \cite{DMFPV19} so that we omit the proof here.

\begin{lemma}[Integration by parts]
\label{IBP}
Let $\nu$ be the Bernoulli measure on $\mathcal{X}^2_N$ with mean $(u,v)$ with $u = \{ u(x) \}_{x \in \mathbb{T}^d_N} $, $v = \{ v(x) \}_{x \in \mathbb{T}^d_N}$ satisfying $0 < u (x), v(x) <1$ and assume there exist some $c_1 >0$ and $0< c_2 <1$ such that $ u(x), u(y) \in [ e^{-c_1 K} , c_2 ] $ for any $x,y \in \mathbb{T}^d_N$ with $|x-y|=1$. Then, for $h= h(\eta)$ satisfying $h(\eta_1^{x,x+e_j}, \eta_2) = h(\eta_1, \eta_2) (x \in \mathbb{T}^d_N, j=1,...,d)$ and for any probability density $f$ with respect to $\nu$, we have 
\[
\int_{\mathcal{X}^2_N} h (\eta) (\eta_{1,y}- \eta_{1,x}) f(\eta) d\nu(\eta)= 
\int_{\mathcal{X}^2_N} h(\eta) \eta_{1,x} \left[ f(\eta_1^{x,y}, \eta_2 ) - f(\eta_1, \eta_2 ) \right]  d\nu (\eta) + R_1 
\] 
for any $x,y\in \mathbb{T}^d_N$ with $|x-y|=1$ and the error term $R_1 $ is bounded as
\[
|R_1| \leq C e^{2C_1 K} | u(x)-u(y) | \int_{\mathcal{X}^2_N} | h(\eta) | f(\eta) d\nu (\eta) 
\]
with some positive constant $C>0$. 
\end{lemma}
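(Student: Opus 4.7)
The plan is to establish the identity via a change-of-variables argument based on the involution $\sigma^{x,y}\colon \eta_1 \mapsto \eta_1^{x,y}$ along the edge $\{x,y\}$, combined with the hypothesized invariance of $h$ under this swap. First, I would decompose
\[
\eta_{1,y} - \eta_{1,x} = \eta_{1,y}(1-\eta_{1,x}) - \eta_{1,x}(1-\eta_{1,y}),
\]
so the left-hand integral splits into two contributions. Applying $\sigma^{x,y}$ to the first contribution swaps $\eta_{1,x}$ with $\eta_{1,y}$, turning $\eta_{1,y}(1-\eta_{1,x})$ into $\eta_{1,x}(1-\eta_{1,y})$ and $f(\eta)$ into $f(\eta_1^{x,y})$. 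The invariance of $h$ leaves it unchanged, while the Bernoulli product structure produces a Radon--Nikodym factor $r := u(y)(1-u(x))/[u(x)(1-u(y))]$ on the event $\{\eta_{1,x}(1-\eta_{1,y})=1\}$. Combining the two contributions, the original integral equals
\[
\int_{\mathcal{X}^2_N} h(\eta)\, \eta_{1,x}(1-\eta_{1,y}) \bigl[ r f(\eta_1^{x,y}) - f(\eta) \bigr] d\nu(\eta).
\]

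Next I would write $r f(\eta_1^{x,y}) - f(\eta) = [f(\eta_1^{x,y})-f(\eta)] + (r-1)f(\eta_1^{x,y})$ to isolate the target bracket and an error term $R_1$. The key observation for the main term is that when $\eta_1(x)=\eta_1(y)=1$ the swap is trivial, so $f(\eta_1^{x,y})-f(\eta)=0$; the factor $(1-\eta_{1,y})$ in front of $\eta_{1,x}[f(\eta_1^{x,y})-f(\eta)]$ is therefore redundant and the main term collapses to $\int h(\eta)\eta_{1,x}[f(\eta_1^{x,y})-f(\eta)]d\nu$, as required. For the residue $R_1 = (r-1)\int h(\eta)\eta_{1,x}(1-\eta_{1,y}) f(\eta_1^{x,y})d\nu$, a second application of $\sigma^{x,y}$, again using invariance of $h$, rewrites it as $\tfrac{r-1}{r} \int h(\eta) \eta_{1,y}(1-\eta_{1,x}) f(\eta) d\nu$. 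Direct computation using the hypotheses $e^{-C_1 K} \leq u(x),u(y) \leq c_2 < 1$ yields $|r-1| \leq |u(x)-u(y)|/[u(x)(1-u(y))] \leq C e^{C_1 K}|u(x)-u(y)|$ and $1/r \leq C e^{C_1 K}$; multiplying these gives the claimed prefactor $C e^{2C_1 K}|u(x)-u(y)|$ against $\int |h|f\, d\nu$.

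The argument is largely algebraic; the only genuine care needed is the tracking of the Radon--Nikodym derivative of $\sigma^{x,y}$ on the two non-trivial events $\{\eta_1(x)=1,\eta_1(y)=0\}$ and $\{\eta_1(x)=0,\eta_1(y)=1\}$ and the signs after each substitution. The exponential coefficient $e^{2 C_1 K}$ is not an artifact of the estimate but is forced by the possibility that $u(x)$ or $u(y)$ may be as small as $e^{-C_1 K}$: both the factor $|r-1|$ (via $1/u(x)$) and the factor $1/r$ (via $1/u(y)$) degrade exponentially under the lower bound of assumption (A1), and their product is precisely what appears in the bound on $R_1$.
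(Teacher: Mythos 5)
Your proof is correct. The paper itself omits the proof of this lemma, referring to \cite{DMFPV19}, and your change-of-variables argument along the involution $\eta_1\mapsto\eta_1^{x,y}$ — splitting $\eta_{1,y}-\eta_{1,x}=\eta_{1,y}(1-\eta_{1,x})-\eta_{1,x}(1-\eta_{1,y})$, tracking the Radon--Nikodym factor $r$, and noting that the factor $(1-\eta_{1,y})$ is redundant in the main term because the swap acts trivially when $\eta_1(x)=\eta_1(y)=1$ — is exactly the standard argument behind that reference, so nothing is missing. One small inaccuracy in your closing commentary: the two exponential losses do not compound, since $\bigl|\tfrac{r-1}{r}\bigr| = \tfrac{|u(x)-u(y)|}{u(y)(1-u(x))}$ after cancellation, so a single factor $e^{C_1K}$ already suffices; the stated bound with $e^{2C_1K}$ is simply weaker, so your estimate still proves the lemma, but the prefactor is not ``forced'' by the argument.
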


\begin{lemma}
\label{IBP2}
Under the same assumptions stated in Lemma \ref{IBP}, we have 
\begin{equation}
\label{summand V-Vl}
\int_{\mathcal{X}^2_N} h^{\ell,j}_x (\omega_{1,z+e_j} - \omega_{1,z}) f d\nu = 
\int_{\mathcal{X}^2_N} h^{\ell,j}_x \frac{\eta_{1,z }}{\chi(u( z ))} \left[ f(\eta_1^{z , z + e_j} , \eta_2) - f(\eta_1, \eta_2)  \right] d\nu + R_2
\end{equation}
for every $x, z \in \mathbb{T}^d_N$ and the error term $R_2$ is bounded as 
\begin{equation}
\label{R bound}
|R_2| \leq  C e^{3C_1 K} |u( z )- u( z + e_j)| \int_{\mathcal{X}^2_N} |h^{\ell,j}_x(\eta)| f d\nu
\end{equation}
with some positive constant $C>0$. 
\end{lemma}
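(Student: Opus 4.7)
The plan is to expose an $\eta_1$-difference $\eta_1(z+e_j)-\eta_1(z)$ inside $\omega_{1,z+e_j}-\omega_{1,z}$ and apply Lemma \ref{IBP} to it, absorbing every remaining piece into $R_2$ with the help of the lower bound $\chi(u(\cdot))\ge c\,e^{-C_1 K}$ supplied by (A1) and Lemma \ref{dsol est2}. Writing $\omega_{1,x}=\overline{\eta}_1(x)/\chi(u(x))$, I would first split
\[
\omega_{1,z+e_j}-\omega_{1,z} \;=\; \frac{\overline{\eta}_1(z+e_j)-\overline{\eta}_1(z)}{\chi(u(z))} \;+\; \overline{\eta}_1(z+e_j)\left(\frac{1}{\chi(u(z+e_j))}-\frac{1}{\chi(u(z))}\right).
\]
Because $\chi$ is Lipschitz on $[0,1]$ and $\chi(u(z)),\chi(u(z+e_j))\ge c\,e^{-C_1 K}$, the factor in parentheses is pointwise bounded by $C e^{2C_1 K}|u(z+e_j)-u(z)|$; since $|\overline{\eta}_1|\le 1$, testing against $|h^{\ell,j}_x|f\,d\nu$ yields a contribution to $R_2$ of the form stated in (\ref{R bound}).

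For the first summand, decompose further $\overline{\eta}_1(z+e_j)-\overline{\eta}_1(z) = [\eta_1(z+e_j)-\eta_1(z)]-[u(z+e_j)-u(z)]$. The deterministic piece $-[u(z+e_j)-u(z)]/\chi(u(z))$ is pointwise bounded by $C e^{C_1 K}|u(z+e_j)-u(z)|$, so its integral against $|h^{\ell,j}_x|f\,d\nu$ again fits into $R_2$. For the random piece, recall from the discussion immediately after (\ref{V-Vl}) that $h^{\ell,j}_x$ is invariant under the exchange $\eta_1\mapsto\eta_1^{z,z+e_j}$, by the ``utmost right site'' convention made in the definition of $g$. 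Therefore Lemma \ref{IBP} applies with $h^{\ell,j}_x$ in the role of $h$ and the bond $\{z,z+e_j\}$, giving
\[
\int h^{\ell,j}_x\,[\eta_1(z+e_j)-\eta_1(z)]\,f\,d\nu \;=\; \int h^{\ell,j}_x\,\eta_1(z)\bigl[f(\eta_1^{z,z+e_j},\eta_2)-f(\eta_1,\eta_2)\bigr]d\nu + R_1,
\]
with $|R_1|\le C e^{2C_1 K}|u(z+e_j)-u(z)|\int|h^{\ell,j}_x|f\,d\nu$.

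Finally, dividing through by $\chi(u(z))\ge c\,e^{-C_1 K}$ produces the main term of (\ref{summand V-Vl}) and multiplies $R_1$ by an extra factor $e^{C_1 K}$; collecting all contributions gives $|R_2|\le C e^{3C_1 K}|u(z+e_j)-u(z)|\int|h^{\ell,j}_x|f\,d\nu$, as claimed. The argument is essentially bookkeeping once Lemma \ref{IBP} is in hand; the only delicate point is tracking how the factor $e^{C_1 K}$ accumulates through each division by $\chi(u)$ so that the final exponent is indeed $3 C_1 K$, and verifying that the invariance hypothesis of Lemma \ref{IBP} genuinely holds for $h^{\ell,j}_x$ at the bond $\{z,z+e_j\}$ (which is exactly what the ``utmost right site'' choice in the definition of $g$ was designed to guarantee).
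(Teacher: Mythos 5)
Your derivation is correct, and it is exactly the argument the paper has in mind: the paper itself omits the proof of Lemma \ref{IBP2} (referring to \cite{DMFPV19}), and the intended route is precisely yours --- split $\omega_{1,z+e_j}-\omega_{1,z}$ into the $\eta_1$-increment over $\chi(u(z))$ plus a deterministic correction and a $1/\chi$-increment correction, apply Lemma \ref{IBP} to the increment $\eta_{1,z+e_j}-\eta_{1,z}$, and divide by $\chi(u(z))\ge (1-c_2)e^{-C_1K}$, which accounts for the exponent $3C_1K$ in \eqref{R bound}. One small caution: the ``utmost right site'' convention guarantees the invariance of $h^{\ell,j}_x$ under $\eta_1\mapsto\eta_1^{z,z+e_j}$ only when the bond is $\{x,x+e_j\}$ (i.e.\ $z=x$), which is the only case actually used in Lemma \ref{summand est} and Lemma \ref{replacement}; for general $z$ the invariance is not automatic but is anyway part of the hypotheses inherited from Lemma \ref{IBP}, so your proof is consistent with the statement as given.
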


Applying these lemmas, we next bound the summand in (\ref{V-Vl}). Here we write the Dirichlet energy corresponding to the Kawasaki dynamics as a sum of its pieces
\[
\mathcal{D}_{x,x+e_j}( f ; \nu)  \coloneqq \frac{1}{4} \int_{\mathcal{X}^2_N} {\left[ f(\eta^{x,x+e_j}_1, \eta_2) - f(\eta_1, \eta_2)  \right]}^2 d\nu(\eta)
\]
so that $\mathcal{D}( f ; \nu) = \sum_{x \in \mathbb{T}^d_N} \sum_{j=1,...,d} \mathcal{D}_{x,x+e_j}( f; \nu)$. We recall here the definition of $\mu^N_t$ and $\nu^N_t$ given at the beginning of subsection 3.1 and hereafter we define $f^N_t \coloneqq d\mu^N_t / d\nu^N_t$ so that we have $\mu^N_t = f^N_t \nu^N_t $.

\begin{lemma}
\label{summand est}
Assume the assumption (A1). Then there exists a positive constant $C$ such that for every $\beta>0$ and $x, z \in \mathbb{T}^d_N$ we have
\[
\int_{\mathcal{X}^2_N} h^{\ell,j}_x (\omega_{1,z } - \omega_{1,z + e_j}) d\mu^N_t 
\leq \beta \mathcal{D}_{z , z + e_j }(\sqrt{ f^N_t }; \nu^N_t ) + \frac{C}{\beta}  e^{ 3 C_1 K} \int_{\mathcal{X}^2_N} (h^{\ell,j}_x)^2   d\mu^N_t  + R_{ 1, z, j }
\]
and each error term $R_{ 1,z, j }$ satisfies the bound (\ref{R bound}), that is, 
\[
|R_{ 1,z, j }| \leq  C e^{3 C_1 K} |u^N ( z )- u^N ( z + e_j)| \int_{\mathcal{X}^2_N} |h^{\ell,j}_x(\eta)|  d\mu^N_t . 
\]
\end{lemma}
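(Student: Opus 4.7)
The plan is to apply the integration-by-parts identity of Lemma \ref{IBP2} to move the discrete gradient from $\omega_1$ onto $f^N_t$, factor the resulting $f^N_t$-difference in a form that exposes the Dirichlet piece $\mathcal{D}_{z,z+e_j}$, and then apply Young's inequality with a parameter tuned to $\beta$. Concretely, writing $d\mu^N_t = f^N_t\, d\nu^N_t$ and using Lemma \ref{IBP2} (up to the sign switching $\omega_{1,z+e_j}-\omega_{1,z}$ to $\omega_{1,z}-\omega_{1,z+e_j}$), the left-hand side becomes
\[
-\int_{\mathcal{X}^2_N} h^{\ell,j}_x\,\frac{\eta_{1,z}}{\chi(u^N(z))}\bigl[f^N_t(\eta_1^{z,z+e_j},\eta_2) - f^N_t(\eta_1,\eta_2)\bigr]\, d\nu^N_t \;+\; R_{1,z,j},
\]
and the error $R_{1,z,j}$ already satisfies \eqref{R bound} by construction.

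I would then use the standard factorization
\[
f^N_t(\eta^{z,z+e_j}) - f^N_t(\eta) = \bigl(\sqrt{f^N_t(\eta^{z,z+e_j})} - \sqrt{f^N_t(\eta)}\bigr)\bigl(\sqrt{f^N_t(\eta^{z,z+e_j})} + \sqrt{f^N_t(\eta)}\bigr)
\]
and apply Young's inequality with the tuning parameter chosen so that the squared-gradient factor produces $\beta\,\mathcal{D}_{z,z+e_j}(\sqrt{f^N_t};\nu^N_t)$, matching the first term in the statement. The remaining contribution, after using $(\sqrt{a}+\sqrt{b})^2 \le 2(a+b)$ and $\eta_{1,z}^2 \le 1$, is bounded by
\[
\frac{C}{\beta}\int_{\mathcal{X}^2_N} (h^{\ell,j}_x)^2 \,\frac{1}{\chi(u^N(z))^2}\bigl[f^N_t(\eta^{z,z+e_j}) + f^N_t(\eta)\bigr]\, d\nu^N_t,
\]
which I will absorb into $(C/\beta) e^{3C_1 K}\int (h^{\ell,j}_x)^2\, d\mu^N_t$. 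For the $f^N_t(\eta)$ piece, assumption (A1) gives $\chi(u^N(z)) \ge (1-C_2)e^{-C_1 K}$ and hence $1/\chi(u^N(z))^2 \le C e^{2C_1 K}$, yielding the bound directly. For the $f^N_t(\eta^{z,z+e_j})$ piece, I would change variables $\eta\mapsto\eta^{z,z+e_j}$; the Radon--Nikodym derivative of the swapped measure with respect to $\nu^N_t$ is a product of at most two single-site Bernoulli weight ratios, each controlled by a constant times $e^{C_1 K}$ via (A1), so the swap contributes one additional factor $e^{C_1 K}$ beyond the $e^{2C_1 K}$ already counted, giving the total exponent $3C_1 K$.

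The main obstacle I expect is precisely this last change-of-variables step. The function $h^{\ell,j}_x$ was designed to be invariant under the swap at $(x,x+e_j)$, but not under a generic swap at $(z,z+e_j)$, so after the substitution one is left with $(h^{\ell,j}_x \circ \mathrm{swap}_{z,z+e_j})^2$ rather than $(h^{\ell,j}_x)^2$ inside the integral. Closing this gap requires invoking the uniform bound $\|g\|_{L^\infty} \le C e^{C_1 K}$ built into the framework at the start of Section \ref{sec:prob}, which is what keeps the final exponent at $3C_1 K$ and no larger. Verifying that no additional exponential factor leaks in through this bookkeeping, and in particular that the $L^\infty$ bound interacts cleanly with the Radon--Nikodym estimate rather than multiplying costs, is the technical heart of the argument.
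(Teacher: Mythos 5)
Your main chain of estimates is exactly the paper's proof: apply Lemma \ref{IBP2} with $h=h^{\ell,j}_x$, $f=f^N_t$, $\nu=\nu^N_t$ (the error being $R_{1,z,j}$ with the bound \eqref{R bound}), factor $f^N_t(\eta_1^{z,z+e_j},\eta_2)-f^N_t(\eta_1,\eta_2)$ as a difference of squares of $\sqrt{f^N_t}$, apply Young's inequality with the parameter tuned to produce $\beta\,\mathcal{D}_{z,z+e_j}(\sqrt{f^N_t};\nu^N_t)$ plus $\frac{C}{\beta\chi(u^N(z))^2}\int (h^{\ell,j}_x)^2[f^N_t(\eta_1^{z,z+e_j},\eta_2)+f^N_t(\eta_1,\eta_2)]d\nu^N_t$, and then remove the swap by the change of variables $\eta_1^{z,z+e_j}\mapsto\eta_1$, paying the single-bond Bernoulli ratio $1+r_{z,z+e_j}$ with $|r_{z,z+e_j}|\le C_0e^{C_1K}|u^N(z)-u^N(z+e_j)|$, and finally $\chi(u^N(z))^{-2}\le Ce^{2C_1K}$ (strictly speaking via Lemma \ref{dsol est2} at time $t$, not (A1) alone, but the paper glosses this the same way).

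The one place where you depart from the paper is the step you call the technical heart, and there your proposed fix does not work as stated. The change of variables does not leave you with $(h^{\ell,j}_x\circ\mathrm{swap}_{z,z+e_j})^2$ to be controlled by $\|g\|_{L^\infty}\le Ce^{C_1K}$: the whole framework (the hypotheses of Lemmas \ref{IBP} and \ref{IBP2}, and the ``utmost right site'' recentering in the construction of $g$ and hence of $h^{\ell,j}_x$) is set up precisely so that $h^{\ell,j}_x$ is \emph{exactly invariant} under the exchange at the bond carrying the Dirichlet piece, and in the only way the lemma is used (in the proof of Lemma \ref{replacement}) one has $z=x$, where this invariance holds. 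So $h^{\ell,j}_x$ passes through the substitution unchanged and no additional factor appears; this invariance, not an $L^\infty$ bound, is what keeps the exponent at $3C_1K$. Conversely, if you really tried to close a non-invariance gap with the sup-norm of $g$, you would either replace $\int (h^{\ell,j}_x)^2\,d\mu^N_t$ by a sup-norm constant of order $\ell^d g_d(\ell)e^{2C_1K}$ — destroying the second-moment structure that the concentration-inequality step in Lemma \ref{replacement} relies on — or, via $(a+b)^2\le 2a^2+2b^2$ and the flow bound on $\Phi^\ell$, generate an extra additive term of order $\beta^{-1}e^{4C_1K}g_d(\ell)$ that is simply not present in the conclusion of the lemma. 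So the statement you would prove by your route is weaker than (and different from) the one asserted; the correct reading is that the swap-invariance of $h^{\ell,j}_x$ at the relevant bond is an assumption/feature of the construction, not an obstacle to be estimated away.
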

\begin{proof}
After applying Lemma \ref{IBP2} with $h = h^{\ell, j}_x$, $f =f^N_t$ and $\nu= \nu^N_t$, we decompose $f^N_t (\eta_1^{z, z + e_j }, \eta_2) - f^N_t (\eta_1, \eta_2)$ into product by using $a^2-b^2=(a+b)(a-b)$. Then, by an elementary inequality $ab \leq A a^2/ 2  + b^2/ 2A $ for any $a,b \in \mathbb{R}$ and $A >0$, the first term in the right hand side of (\ref{summand V-Vl}) is bounded above by
\begin{equation}
\label{bound}
\beta \mathcal{D}_{z, z + e_j } (\sqrt{f^N_t }; \nu^N_t ) + \frac{C}{\beta \chi(u^N ( z ) )^2} \int_{\mathcal{X}^2_N} (h^{\ell,j}_x)^2 \left[ f^N_t ( \eta_1^{z , z + e_j },\eta_2) + f^N_t (\eta_1, \eta_2) \right]d\nu^N_t .
\end{equation}
Let $\nu_1$ be a product Bernoulli measure on $\mathcal{X}_N$ with weight $u=\{ u(x) \}_{x \in \mathbb{T}^d_N}$ with $0 < u(x) < 1$ for every $x \in \mathbb{T}^d_N$. Taking the spatial-inhomogeneity of $u$ into account, for every $x, y \in \mathbb{T}^d_N $ such that $|x - y| = 1 $, we get the cost to replace $\nu_1(\eta_1^{x,y})$ to $\nu_1(\eta_1)$ as
\[
\frac{\nu_1(\eta_1^{x,y})}{\nu_1(\eta_1)} =1 + r_{x,y}(\eta_1)
\]
with
\[
r_{x,y}( \eta_1 ) =
   \mathbf{1}_{\{ \eta_1( x )  = 1 , \eta_1 ( x ) = 0 \} } \frac{u(y)- u(x)}{u(x)(1-u(x))}  
+ \mathbf{1}_{\{ \eta_1 (x)  =0, \eta_1 (x) = 1  \} } \frac{u(x)- u(y)}{(1-u(x))u(y)}  
\]
and this error to change variables can be absolutely bounded as
\[
|r_{x,y}(\eta_1)| \leq C_0 e^{ C_1 K} | u(x)- u(y) |
\]
for some positive constant $ C_0 $ by our assumption on $u$. Therefore, by conducting the change of variable $\eta_1^{ z ,z + e_j } \mapsto \eta_1$ and using the bound of the cost $r_{z, z + e_j }$, the integral in (\ref{bound}) divided by $\chi(u^N ( z ))^2$ is bounded above by
\[
\frac{1+C_0 e^{ C_1 K} |u^N ( z )- u^N ( z + e_j )|}{\chi( u^N ( z ) )^2} \int_{\mathcal{X}^2_N} (h^{\ell,j}_x)^2 f^N_t d\nu^N_t 
.\]
Hence, recalling the definition of the incompressibility $\chi(\cdot)$ and using the bound for $u$ to end the proof.
\end{proof}

Now we prove Lemma \ref{replacement} by using the concentration inequality which is used in vast literatures.

\begin{proposition}[Concentration inequality]
\label{concentration}
Let $\{X_i \}_{\{i=1,...,n\}}$ be a sequence of independent random variables such that each $X_i$ takes values in the interval $[a_i, b_i]$ for $a_i, b_i \in \mathbb{R}$ with $a_i < b_i$. Set $\bar{X}_i = X_i - E[X_i]$ and $\kappa = \sum_{i=1}^n (b_i - a_i )^2$. Then, for every $\gamma \in [0, \kappa^{-1}]$, we have 
\[
\log{ E \bigg[  e^{ \gamma \big(\sum_{i=1,...,n} \bar{X}_i \big )^2 }  \bigg]  } \leq 2\gamma \kappa
.\] 
\end{proposition}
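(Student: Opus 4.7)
The plan is to combine Hoeffding's lemma with the Gaussian linearization trick $e^{\gamma s^2} = E_Z[\exp(\sqrt{2\gamma}\,sZ)]$, where $Z$ is an auxiliary standard Gaussian independent of everything else. First, I would centre each $X_i$ and invoke Hoeffding's lemma: since $\bar{X}_i = X_i - E[X_i]$ is mean-zero and takes values in an interval of length $b_i - a_i$,
\[
E\bigl[e^{\lambda \bar{X}_i}\bigr] \le \exp\bigl(\lambda^2 (b_i-a_i)^2/8\bigr)
\]
for every $\lambda \in \mathbb{R}$. Writing $S = \sum_{i=1}^n \bar{X}_i$ and using independence, this yields the sub-Gaussian bound $E[e^{\lambda S}] \le \exp(\lambda^2 \kappa/8)$ for all $\lambda \in \mathbb{R}$.

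Next, I would introduce $Z \sim N(0,1)$ independent of $S$ and use the identity $e^{\gamma s^2} = E_Z[e^{\sqrt{2\gamma}\,sZ}]$, combined with Fubini's theorem, to replace the quadratic exponential by a linear one. Conditioning on $Z$ and applying the sub-Gaussian bound from the previous step pointwise in $Z$ gives
\[
E[e^{\gamma S^2}] = E_Z\bigl[E_S[e^{\sqrt{2\gamma}\,Z S}]\bigr] \le E_Z\bigl[e^{\gamma \kappa Z^2/4}\bigr] = (1-\gamma\kappa/2)^{-1/2},
\]
where the last equality is the usual Gaussian integral and is valid as long as $\gamma\kappa/2 < 1$, which holds throughout the stated range $\gamma \in [0, \kappa^{-1}]$.

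Taking logarithms and using the elementary inequality $-\log(1-x) \le 2x$ for $x \in [0, 1/2]$, which applies because $\gamma\kappa/2 \le 1/2$, I would conclude
\[
\log E[e^{\gamma S^2}] \le -\tfrac{1}{2}\log(1-\gamma\kappa/2) \le \tfrac{\gamma\kappa}{2} \le 2\gamma\kappa,
\]
which is the desired estimate (and in fact gives a slightly sharper constant than stated). Since this is classical, I do not anticipate any genuine obstacle; the only place one needs to be careful is in verifying that Fubini applies in the Gaussian trick, but this is immediate from the positivity of $e^{\sqrt{2\gamma}\,sz}$ and the finiteness of the resulting integrals throughout the stated range of $\gamma$.
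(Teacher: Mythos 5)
Your argument is correct. Note that the paper itself offers no proof of this statement: by its stated convention, results labelled \emph{Proposition} are quoted from the literature (here the concentration bound used in the relative-entropy literature, e.g.\ in the works of Jara--Menezes and De Masi--Funaki--Presutti--Vares), so there is no internal proof to compare against. Your route --- Hoeffding's lemma giving $E[e^{\lambda S}]\le e^{\lambda^2\kappa/8}$ for all real $\lambda$, followed by the Gaussian linearization $e^{\gamma s^2}=E_Z[e^{\sqrt{2\gamma}\,sZ}]$ with an independent standard Gaussian $Z$ --- is a standard and complete way to establish it. The details check out: Tonelli (rather than Fubini) suffices since the integrand is nonnegative; conditioning on $Z$ and applying the sub-Gaussian bound with $\lambda=\sqrt{2\gamma}\,Z$ gives $E_Z[e^{\gamma\kappa Z^2/4}]=(1-\gamma\kappa/2)^{-1/2}$, which is finite precisely because $\gamma\kappa\le 1<2$ on the stated range; and the elementary bound $-\log(1-x)\le 2x$ on $[0,1/2]$ applies with $x=\gamma\kappa/2$. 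Your computation in fact yields the sharper constant $\gamma\kappa/2$ in place of $2\gamma\kappa$, which is more than enough for the way the proposition is used in the proof of Lemma \ref{replacement}, where only the order $O(\gamma\kappa)$ of the log-moment generating function matters.
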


\begin{proof}[Proof of Lemma \ref{replacement}]
Recalling the representation of $V-V^\ell$ in (\ref{V-Vl}), what we should estimate is given by
\[
\int_{\mathcal{X}^2_N} (V- V^\ell) d\mu^N_t 
= K \sum_{j=1}^d \sum_{x \in \mathbb{T}^d_N}  \int_{\mathcal{X}^2_N}  h^{\ell,j}_x (\omega_{1,x}- \omega_{1,x+e_j}) d\mu^N_t 
.\]
By Lemma \ref{summand est}, taking $\beta = \alpha N^2 K^{-1}$ with $\alpha >0$, the above quantity is bounded above by
\[
\begin{aligned}
\alpha N^2 \mathcal{D} (\sqrt{f^N_t }; \nu^N_t ) 
+ \frac{CK^2}{\alpha N^2} e^{3 C_1 K} \sum_{j=1}^d \sum_{x \in \mathbb{T}^d_N}  \int_{\mathcal{X}^2_N}  (h^{\ell,j}_x)^2 d\mu^N_t  + K \sum_{j=1}^d \sum_{x \in \mathbb{T}^d_N} R_{1, x , j }.
\end{aligned}
\]
Recall that the residual term $R_{1,x,j}$ has the bound (\ref{R bound}) for each $x \in \mathbb{T}^d_N$ and $j=1,...,d$. Since $| u^N (x)- u^N (x+e_j) | \leq C K N^{-1}  $ by Lemma \ref{grad u}, estimating $| h^{\ell,j}_x | \leq  1 + (h^{\ell,j}_x)^2 $, we have 
\[
K |R_{1,x,j} | \leq \frac{ C K^2 }{ N } e^{3 C_1 K} \int_{\mathcal{X}^d_N} \left( 1+ (h^{\ell,j}_x)^2 \right) d\mu^N_t 
.\]
Therefore, the expectation with respect to $\mu^N$ of $V - V^\ell$ is bounded above by
\[
\alpha N^2 \mathcal{D}(\sqrt{ f^N_t }; \nu^N_t ) 
+ \frac{C_\alpha K^2}{N} e^{3 C_1 K} \sum_{j=1}^d \sum_{x \in \mathbb{T}^d_N} \int_{\mathcal{X}^2_N} (h^{\ell,j}_x)^2 d\mu^N_t 
+ C K^2 e^{3 C_1 K } N^{d-1}
.\]
For the second term, noting that the random variables $\{ h^{\ell,j}_x \}$ are $(2 \ell-1)$-dependent, we decompose the summation $\sum_{x \in \mathbb{T}^d_N}$ into $\sum_{y \in \Lambda_{2\ell} } \sum_{z \in (4 \ell) \mathbb{T}^d_N \cap \mathbb{T}^d_N}$ and then apply the entropy inequality, which yields  
\[
\begin{aligned}
\sum_{x \in \mathbb{T}^d_N} \int_{\mathcal{X}^2_N} (h^{\ell,j}_x)^2 d\mu^N_t  
& \leq  \frac{1}{\gamma} \sum_{y \in \Lambda_{2 \ell}}  	\left( H(\mu^N_t | \nu^N_t ) + \log{  \int_{\mathcal{X}^2_N} \prod_{z \in (4 \ell) \mathbb{T}^d_N \cap \mathbb{T}^d_N} e^{\gamma (h^{\ell,j}_{z+y})^2} d\nu^N_t  }    \right)   \\
& = \frac{ (2\ell)^d }{\gamma} \left( H(\mu^N_t | \nu^N_t )  + \sum_{z \in (4\ell) \mathbb{T}^d_N \cap \mathbb{T}^d_N  } \log{ \int_{\mathcal{X}^2_N} e^{\gamma (h^{\ell,j}_{z+y})^2 } }  d\nu^N_t \right)
\end{aligned}
\]
for every $\gamma >0$. Moreover, recall here that by the flow lemma stated in Proposition \ref{flow lem} we can estimate the variance of $h^{\ell,j}_x$ as 
\[
\sigma^2 \coloneqq \sup_{x \in \mathbb{T}^d_N,  j = 1,...,d } {\rm Var}_{ \nu^N_t } [ h^{\ell,j}_x ]  \leq C_d g_d (\ell) e^{ 2 C_1 K }
\]
with $g_d(\ell)$ in Proposition \ref{flow lem}. Therefore, applying the concentration inequality, we have
\[
\log \int_{\mathcal{X}^2_N} e^{\gamma (h^{\ell,j}_x)^2} d\nu^N_t \leq  2
\]
for every $0 < \gamma \leq  C_0 \sigma^{-2}$. Therefore, by choosing $\gamma^{-1} = C_0^{-1} C_d g_d(\ell) e^{ 2 C_1 K } $, we have shown $E_{\mu^N_t } [ V -V^\ell ]$ is bounded above by
\[
\alpha N^2 \mathcal{D} ( \sqrt{ f^N_t }; \nu^N_t ) 
+ \frac{\bar{C}_\alpha \ell^d g_d(\ell) K^2 e^{ 5 C_1 K} }{ N } \left( H ( \mu^N_t  | \nu^N_t ) + \frac{N^d}{\ell^d}  \right)  
+ C K^2 e^{ 3 C_1 K } N^{d-1}
.\] 
Now recalling the growth rate of $K$ was slower than $\delta (\log N)^{1/2}$ by the assumption (A3)$_\delta$, we end the proof by choosing $\ell = N^{1/d -\kappa/d}$ when $d \geq 2$ and $\ell = N^{ 1/2-\kappa }$ when $d = 1$. 
\end{proof}

We thus estimated the cost to replace the reminder term $V$ to its local average $V^\ell$ and next we prove the following bound for $V^\ell$. 

\begin{lemma}
We assume the same conditions as Theorem \ref{prob part}. Then for any $\kappa > 0$, we have
\[
E_{\mu^N_t} \left[V^\ell \right]  \leq C K H( \mu^N_t | \nu^N_t ) + C_\kappa N^{d-1+\kappa}
\]
when $d \geq 2$. When $d = 1$, the last term on the right hand side of the above is replaced by $N^{1/2 + \kappa}$. 
\end{lemma}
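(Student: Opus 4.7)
The plan is to combine the entropy inequality with a concentration estimate for $V^\ell$ under the product Bernoulli measure $\nu^N_t$. For any $\gamma>0$, the entropy inequality gives
\[
E_{\mu^N_t}[V^\ell] \leq \frac{1}{\gamma}H(\mu^N_t \mid \nu^N_t) + \frac{1}{\gamma}\log E_{\nu^N_t}\bigl[e^{\gamma V^\ell}\bigr],
\]
so I will choose $\gamma$ of order $1/K$ to produce the targeted coefficient $CK$ in front of the entropy, reducing the task to showing $\log E_{\nu^N_t}[e^{\gamma V^\ell}] \lesssim C_\kappa N^{d-1+\kappa}/K$.

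The key structural observation is that, by the rightmost-site convention introduced earlier, $\overleftarrow{g}_{x,\ell}$ depends on $\eta$ only at sites strictly to the left of $x$, while $\overrightarrow{\omega_1}_{x,\ell}$ depends on $\eta_2$ at sites in $\{x,\ldots,x+\ell-1\}^d$. Consequently each $Y_x := \overleftarrow{g}_{x,\ell}\overrightarrow{\omega_1}_{x,\ell}$ is centered under $\nu^N_t$ (the two factors are independent and $E_{\nu^N_t}[\omega_{1,y}]=0$), has dependency range at most $3\ell$, and admits the sharp variance bound $E_{\nu^N_t}[Y_x^2] \leq Ce^{C'K}/\ell^d$ inherited from the fact that $\overrightarrow{\omega_1}_{x,\ell}$ is an average of $\ell^d$ independent mean-zero Bernoulli-type variables. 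I then partition $\mathbb{T}^d_N$ into $M=(4\ell)^d$ cosets $\{G_j\}$ so that sites within each $G_j$ are at pairwise distance at least $4\ell$; the variables $\{Y_x\}_{x\in G_j}$ are then mutually independent under $\nu^N_t$. Applying Hölder's inequality to decouple cosets and the concentration inequality of Proposition \ref{concentration} (or an equivalent Bernstein-type estimate) within each coset should yield
\[
\log E_{\nu^N_t}\bigl[e^{\gamma V^\ell}\bigr] \leq C\gamma^2 K^2 N^{d-1+\kappa} e^{C''K}
\]
once $\ell$ is chosen as $N^{1/d-\kappa/d}$ for $d\ge 2$ (respectively $N^{1/2-\kappa}$ for $d=1$) so that $|G_j|\sim N^d/M$ and the factors $M/\ell^d$ balance appropriately. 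Setting $\gamma = 1/(CK)$ then gives the desired entropy coefficient, and the residual factor $Ke^{C''K}=N^{o(1)}$ (valid under $(A3)_\delta$) is absorbed into a slightly larger exponent $N^{d-1+\kappa'}$, which is permissible since the claim must hold for every $\kappa>0$.

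The main obstacle is the delicate interplay between the choice $\gamma=1/(CK)$, which is forced by the target entropy coefficient, and the regime in which the concentration bound is sharp: with this $\gamma$ the product $\gamma K M$ exceeds $1/\|Y\|_\infty$ and a pure Bernstein estimate does not directly apply. The argument must therefore exploit the cancellation $u^N(x)/\chi(u^N(x))=1/(1-u^N(x))$, which makes the combined variable $g_x\omega_{1,x}$ uniformly bounded in the $V_G$ case even though $\|\omega_{1,\cdot}\|_\infty$ grows like $e^{CK}$, and then carefully track the $e^{CK}$ factors introduced at various steps so that they can be absorbed into the subpolynomial error $N^{d-1+\kappa}$ using the growth restriction $K\leq \delta(\log N)^{1/2}$ from $(A3)_\delta$.
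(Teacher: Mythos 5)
Your proposal follows essentially the same route as the paper's own proof: the entropy inequality combined with a decomposition of $\mathbb{T}^d_N$ into $4\ell$-separated cosets to gain independence under $\nu^N_t$, the concentration inequality of Proposition \ref{concentration} for the resulting exponential moments, the same choice $\ell = N^{1/d-\kappa/d}$ (resp.\ $N^{1/2-\kappa}$ for $d=1$), and absorption of the residual $K$ and $e^{CK}$ factors via (A3)$_\delta$. Applying the entropy inequality once globally with $\gamma \sim 1/K$ and then decoupling the cosets by H\"older is only a repackaging of the paper's coset-by-coset application with $\gamma \sim \ell^{d}$ (the effective exponential tilt per independent block is the same), so your argument is the paper's proof in different bookkeeping.
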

\begin{proof}
We again decompose the sum $\sum_{x \in \mathbb{T}^d_N}$ in the definition of $V^\ell$ as $\sum_{y \in \Lambda_{2\ell}} \sum_{ z \in (4\ell) \mathbb{T}^d_N \cap \mathbb{T}^d_N }$ and recall $a_{y_0 + x, \Lambda + x}$ is uniformly bounded above by some $C_a > 0$. Then, by using the entropy inequality and the concentration inequality to show
\[
\begin{aligned}
\int_{\mathcal{X}^2_N} V^\ell d\mu^N_t  
&\le \frac{C_a K }{\gamma} \sum_{y \in \Lambda_{2\ell}} 
\bigg( H(\mu^N_t | \nu^N_t) + \sum_{z \in (4\ell) \mathbb{T}^d_N \cap \mathbb{T}^d_N} \log E_{\nu^N_t} [ e^{\gamma \overleftarrow{ ( \omega_1 ) } _{z+y,\ell } \overrightarrow{ ( \omega_2 ) }_{z+y, \ell}   } ]  \bigg)  \\
& \le \frac{ C_a K ( 4 \ell )^d }{\gamma} \bigg(  H(\mu^N_t | \nu^N_t) + \frac{N^d}{ (4\ell)^d } C_1 \gamma \ell^{-d}  \bigg)
\end{aligned}
\]
for $\gamma = c \ell^{d}$ with $c >0$ small enough. Then recalling the way to take $\ell$ when $d \geq 2$ and $d=1$, we have the desired bound and end the proof.
\end{proof}

Hence, we complete the proof of (\ref{est V_G}) for $V$ defined by (\ref{V}) involving all terms appearing in $V_G$.

\subsection{Proof of (\ref{est V_K})}
We now discuss the contribution of 
\[
V_K (t) = -\frac{N^2}{2} \sum_{x,y \in \mathbb{T}^d_N, | x-y | = 1} (u^N (x) - u^N (y) )^2 \omega_{1,x} \omega_{1, y} 
.\]
But this can be estimated in the same manner as \cite{DMFPV19} and \cite{FT18} as follows. We let
\[
V^\ell_K (t) \coloneqq - N^2 \sum_{x \in \mathbb{T}^d_N} \sum_{j=1}^d ( u^N (x) - u^N (x+e_j) )^2 
\overleftarrow{ ( \omega_1 ) }_{x, \ell} 
\overrightarrow{ ( \omega_1 ) }_{x+ e_j, \ell} 
.\]
Using the pointwise estimate for the spatial derivatives of $u^N (t,x)$ proved in Lemma \ref{grad u}, we see that $N^2 (u^N (x) - u^N (y) )^2 $ has order $K^2$ for every $x, y \in \mathbb{T}^d_N$ with $|x-y| = 1$. Therefore, repeating the same argument for $V_G$, we obtain the desired estimate (\ref{est V_K}) where $K$ in (\ref{est V_G}) is replaced by $K^2$.

%%%%%%%%%%%%%%%%%%%%%%%%%%%%%%%%%%%%%%%%%%%%%%%%%%%%
%   PDE part (general results)
%%%%%%%%%%%%%%%%%%%%%%%%%%%%%%%%%%%%%%%%%%%%%%%%%%%%
\section{Several estimates on discrete reaction-diffusion system (\ref{dHDL eq})}
\label{sec:PDE}
In this section, we give some estimates for macroscopic quantities which are determined by solving the semi-discretized hydrodynamic limit equations (\ref{dHDL eq}). Throughout this section, let $u^N=\{ u^N(t,x) \}_{t \in [0,T], x \in \mathbb{T}^d_N}$ and $v^N=\{ v^N(t,x) \}_{t \in [0,T], x \in \mathbb{T}^d_N}$ be the non-negative solution of (\ref{dHDL eq}). First we show the following comparison principle in general form under our discrete settings. %To state the result, we introduce the notion of monotone functional. Let $f $ be a real-valued functional defined on $ \mathbb{R}^{ \mathbb{T}^d_N } $. We say the functional $f$ is monotone if $f(u) \le f (v)$ for every $u = \{ u (x) \}_{ x \in \mathbb{T}^d_N }$ and $v = \{ v (x) \}_{ x \in \mathbb{T}^d_N }$ such that $u \le v$, that is, $u (x) \le v (x)$ for every $x \in \mathbb{T}^d_N$. 

\begin{lemma}
\label{comparison}
Let $( t, u ) \mapsto f(t , x, u)$ be a real-valued smooth function on $[0, T] \times \mathbb{R}^{ \mathbb{T}^d_N } $ for every $x \in \mathbb{T}^d_N$. Let $u^N(t,x)$ be a unique solution of 
\begin{equation}
\label{comparison}
\partial_t u^N (t,x) = \Delta^N u^N(t,x) + f (t, x, u^N(t) )
\end{equation}
and let $\overline{u}^N (t,x)$ (resp. $\underline{u}^N (t,x)$) be a super- (resp. sub-) solution. Namely, $ \overline{u}^N $ (resp. $\underline{u}^N$) satisfies \eqref{comparison} with ``$\ge$'' (resp. ``$\le$'') instead of the equality. Then we have $u^N (t,x) \leq \overline{u}^N (t,x) $ (resp. $u^N (t,x) \geq \underline{u}^N (t,x)$) for every $t \in [0,T]$ and $x \in \mathbb{T}^d_N$ provided $u^N (0,x) \le \overline{u}^N (0,x) $ (resp. $u^N(0, x) \ge \underline{u}^N (0, x) $) for every $x \in \mathbb{T}^d_N$. 
\end{lemma}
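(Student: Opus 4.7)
I only treat the super-solution case $u^N \leq \overline{u}^N$; the sub-solution case is symmetric. Set $w \coloneqq u^N - \overline{u}^N$; subtracting the equation for $u^N$ from the defining inequality for $\overline{u}^N$ gives
\begin{equation*}
\partial_t w(t,x) \leq \Delta^N w(t,x) + f(t,x, u^N(t)) - f(t,x, \overline{u}^N(t)), \qquad w(0, x) \leq 0,
\end{equation*}
and the goal is to prove $w \leq 0$ on $[0,T] \times \mathbb{T}^d_N$.

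My plan is the standard perturbed-maximum argument for discrete parabolic problems. Because $\mathbb{T}^d_N$ is finite, $f$ is smooth, and both $u^N, \overline{u}^N$ are $C^1$ on the compact interval $[0,T]$, the trajectories stay in a bounded subset of $\mathbb{R}^{\mathbb{T}^d_N}$ on which $f$ admits a Lipschitz constant $L$ with respect to the $\ell^\infty$ norm on configurations. Fix $\varepsilon > 0$ and a constant $C > L$ and put $w_\varepsilon (t,x) \coloneqq w(t,x) - \varepsilon e^{C t}$; since $\Delta^N$ annihilates spatially constant functions, the previous inequality becomes
\begin{equation*}
\partial_t w_\varepsilon \leq \Delta^N w_\varepsilon + \bigl[ f(t,x,u^N) - f(t,x,\overline{u}^N) \bigr] - \varepsilon C e^{C t},
\end{equation*}
and $w_\varepsilon(0,\cdot) \leq -\varepsilon < 0$. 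Argue by contradiction: if $t^* \coloneqq \inf \{ t \in [0,T] : \max_x w_\varepsilon(t,x) \geq 0 \}$ is finite, then by continuity there is $x^* \in \mathbb{T}^d_N$ with $w_\varepsilon(t^*, x^*) = 0 = \max_y w_\varepsilon(t^*, y)$. At this first-crossing point one has $\partial_t w_\varepsilon(t^*, x^*) \geq 0$ from the definition of $t^*$, and $\Delta^N w_\varepsilon(t^*, x^*) \leq 0$ from the discrete maximum principle (each neighbor contributes $w_\varepsilon(t^*, y) - w_\varepsilon(t^*, x^*) \leq 0$). Plugging these two facts into the differential inequality forces
\begin{equation*}
\varepsilon C e^{C t^*} \leq f(t^*, x^*, u^N(t^*)) - f(t^*, x^*, \overline{u}^N(t^*)),
\end{equation*}
and a bound of the right-hand side by $L \varepsilon e^{C t^*}$ would contradict $C > L$, after which letting $\varepsilon \to 0$ gives $w \leq 0$.

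The key obstacle, and the only nontrivial step, is upgrading the pointwise inequality $w_\varepsilon(t^*, \cdot) \leq 0$ into the Lipschitz-type estimate $f(t^*, x^*, u^N) - f(t^*, x^*, \overline{u}^N) \leq L \varepsilon e^{C t^*}$. The one-sided bound $u^N(t^*, y) - \overline{u}^N(t^*, y) \leq \varepsilon e^{C t^*}$ holds uniformly in $y$, but $u^N - \overline{u}^N$ could be very negative at some sites, which prevents a naive sup-norm Lipschitz estimate. In the applications to the system (\ref{dHDL eq}), the reaction term $f(t,x,u) = -K c_{i,x}(u,v) u(x) v(x)$ is a polynomial in the configuration with nonnegative coefficients, and a direct expansion shows that decreasing $u^N(y)$ at sites $y \neq x^*$ only changes $f(u^N) - f(\overline{u}^N)$ in a sign-definite way controlled by the local values at $x^*$ and the a-priori bound on $v^N$ (Lemma \ref{dsol est2}). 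This monotonicity structure delivers exactly the Lipschitz-type bound required, closes the contradiction, and completes the proof.
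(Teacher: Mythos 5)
Your overall skeleton coincides with the paper's: subtract the equations, add a small perturbation growing exponentially in time, locate a first space--time point where the perturbed difference touches zero, use $\partial_t \ge 0$ there together with $\Delta^N \le 0$ at a spatial maximum, and derive a contradiction before letting $\varepsilon \to 0$. The divergence is in how the nonlinear increment is controlled, and that is precisely where your argument has a genuine gap --- one you flag yourself but do not close. At the touching time you only have the one-sided bound $u^N(t^*,y) - \overline{u}^N(t^*,y) \le \varepsilon e^{C t^*}$, so no sup-norm Lipschitz estimate for $f$ applies; and your proposed rescue --- that for the reaction terms $f(t,x,u) = -K c_{i,x}(u,v) u(x) v(x)$ the nonnegative-coefficient structure makes the change caused by lowering $u^N(y)$ at $y \neq x^*$ sign-definite in a favourable way --- goes in the wrong direction. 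Because of the minus sign, $f(t,x,\cdot)$ is non-increasing in each off-site value $u(y)$, so making $u^N(y)$ much smaller than $\overline{u}^N(y)$ at sites $y \neq x^*$ \emph{increases} $f(t^*,x^*,u^N) - f(t^*,x^*,\overline{u}^N)$; that difference can then be of order $K$ rather than $O(\varepsilon)$, and the contradiction with $C > L$ is not reached. What your scheme would need is the opposite (Kamke/quasimonotone) condition, namely $f(t,x,u)$ nondecreasing in $u(y)$ for $y \neq x$, which these reaction terms do not satisfy. As written, the last step therefore fails both for the general statement and for the specific $f$ coming from \eqref{dHDL eq}.

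The paper closes the same skeleton by a different device: it never uses a Lipschitz bound over the whole configuration, but writes the increment exactly as $f(t,x,\overline{u}^N(t)) - f(t,x,u^N(t)) = \tilde{f}(t,x,\overline{u}^N(t),u^N(t)) \, (\overline{u}^N(t,x) - u^N(t,x))$ with $\tilde{f}$ a difference quotient in the site-$x$ variable, sets $M = \sup |\tilde{f}|$, and absorbs $M$ into the exponential weight in $w^N = (\overline{u}^N - u^N) e^{Mt} + 2\varepsilon - \varepsilon e^{-t}$; at the touching point the site-$x$ difference is itself $O(\varepsilon)$, so only boundedness of $\tilde{f}$ is needed, not a bound in terms of the full configuration. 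Note that the finiteness of $M$ is exactly where dependence on off-site values is implicitly restricted: it is automatic when $f(t,x,u)$ depends on $u$ only through $u(x)$, but not for a general smooth functional of the configuration, and this is the structural input missing from your version as well. To repair your proof you should either impose such a hypothesis (boundedness of the site-$x$ difference quotient, or quasimonotonicity in the off-site variables) explicitly, or argue as the paper does in its applications, where the comparisons are closed by the definite sign of the reaction term at the touching point rather than by a Lipschitz estimate.
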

\begin{proof}
We give the proof only for super-solution since it can be proved in the same manner for sub-solution. Let $\overline{u}^N(t,x)$ be any given super-solution, that is, it satisfies
\[
\partial_t \overline{u}^N (t,x) \ge \Delta^N \overline{u}^N(t,x) + f (t, x, \overline{u}^N(t) )
\] 
for every $t \in [0,T]$ and $x \in \mathbb{T}^d_N $ by definition. Then, subtracting \eqref{comparison} on both side of the above display to obtain 
\begin{equation}
\begin{aligned}
\label{difference}
\partial_t ( \overline{u}^N (t,x) - u^N (t,x) ) \ge 
& \Delta^N ( \overline{u}^N (t , x ) - u^N ( t,x)  ) \\
& \quad + \tilde{f}( t, x, \overline{u} (t) , u (t) ) ( \overline{u}^N (t,x) - u^N (t,x)  ) .
\end{aligned}
\end{equation}
Here, $\tilde{ f } = \tilde{f} (t, x, \overline{u}^N (t) , u^N (t) )$ is defined by 
\[
\tilde{f} (t, x, \overline{u}^N (t) , u^N (t) ) = 
\begin{cases}
 \displaystyle\frac{ f ( t,  x, \overline{u}^N (t ) ) - f (t, x, u^N (t)) }{  \overline{u}^N (t,x) - u^N (t,x) } &\text{if } \overline{u}^N (t,x)  \neq u^N (t,x) ,  \\
\displaystyle\frac{ \partial f }{  \partial u (x) } ( t, x, u  ) \bigg|_{ u  = u^N (t)  }   &\text{if } \overline{u}^N (t,x) = u^N (t,x) .
% f^\prime ( t, x, u^N (t)  )  &\text{if } \overline{u}^N (t,x) = u^N (t,x) 
\end{cases}
\]
%where $f^\prime$ in the second line denotes the derivative of $f$ with respect to the variable $u^N (t,x) $. 
Let $M \coloneqq \sup_{ (t, x ) \in Q_T }  | \tilde{ f } (t, x, \overline{u}^N (t) , u^N (t ) ) | $ and let $w^N (t,x) \coloneqq ( \overline{u}^N (t,x) - u^N (t,x) ) e^{ M t } + 2 \varepsilon - \varepsilon e^{-t}$ with $\varepsilon > 0$. Note here that such $M < \infty$ exists since $\overline{u}^N (t, x)$ and $u^N (t, x)$ are both continuous in $t$ for every $x \in \mathbb{T}^d_N$, and also by the assumption for the initial function we have $w^N (0, x) > 0$ for every $x \in \mathbb{T}^d_N$. In the sequel, we show $w^N \geq 0 $ in $[0,T] \times \mathbb{T}^d_N$ by contradiction. Suppose there exists a point $(t_0, x_0 ) \in (0,T] \times \mathbb{T}^d_N$ such that $w^N(t_0, x_0) = 0$ for the first time and $w^N (t,x) > 0$ for every $t \in [0, t_0)$ and $x \in \mathbb{T}^d_N$. Then, since $(t_0, x_0 )$ attains minimum of $w^N$ in $[0, t_0] \times \mathbb{T}^d_N$, we have $\partial_t w^N (t_0, x_0) \leq  0$ and $\Delta^N w^N (t_0, x_0) \geq 0$ and thus $\partial_t w^N (t_0, x_0) - \Delta^N w^N (t_0, x_0) \le 0$. 
%If $\Delta^N w^N (t_0, x) = 0$ for every $x \in \mathbb{T}^d_N$, then recalling $w^N (t_0, x_0) = 0$ we have $w^N (t_0, x) = 0$ for every $x \in \mathbb{T}^d_N$ so that according to the uniqueness of solution of ODEs $w^N = 0$ in $[t_0, T] \times \mathbb{T}^d_N$ and the assertion with the equality holds. On the other hand, we assume the minimal point $(t_0, x_0 )$ is such that $\Delta^N w^N (t_0, x_0 ) > 0 $. Moreover, since the functional $f$ is assumed to be monotone, we have $ f (x_0, \overline{u}^N (t_0)) - f (x_0 , u^N (t_0) ) \ge 0$ since $\overline{u}^N (t_0, x) \ge u^N (t_0, x)$ for every $x \in \mathbb{T}^d_N $. 
On the other hand, letting $\tilde{u}^N \coloneqq \overline{u}^N - u^N$, we have by definition of $M$
\[
\begin{aligned}
& \partial_t w^N (t_0, x_0) - \Delta^N w^N (t_0, x_0) \\
& \, = \big(  \partial_t  \tilde{u}^N (t_0, x_0) - \Delta^N \tilde{u}^N (t_0, x_0)  + M \tilde{u}^N (t_0, x_0) \big) e^{ M t_0 } + \varepsilon e^{-t_0 }   \\
& \, \ge \big(  \partial_t  \tilde{u}^N (t_0, x_0) - \Delta^N \tilde{u}^N (t_0, x_0)  - \tilde{f} (t_0,  x_0 , \overline{u}^N (t_0) , u^N (t_0) ) \tilde{u}^N (t_0, x_0) \big) e^{ M t_0 } + \varepsilon e^{-t_0 } .
\end{aligned}
\]
However, since $\overline{u}^N$ is a super-solution of \eqref{comparison}, the estimate \eqref{difference} at the point $(t_0, x_0)$ implies that the last quantity is bounded from below by a strictly positive constant, which is contradiction. Therefore, we have $w^N \ge 0$ so that $\overline{u}^N (t, x ) - u^N (t, x )  \ge \varepsilon ( e^{-t } - 2 ) e^ {- M t }$ for every $(t, x ) \in Q_T $. Since $\varepsilon > 0$ was taken arbitrary, we complete the proof by letting $\varepsilon$ tends to zero. 
%Therefore, combining all the above results, we obtain  
%\[
%\partial_t w^N (t_0,x_0) - \Delta^N w^N (t_0, x_0) - \big( f (x_0, \overline{u}^N (t_0)) - f (x_0 , u^N (t_0) )  \big) < 0 .
%\]
%However, since $\overline{u}^N $ is a super-solution, the left-hand side of the above display is greater than or equal to zero, which makes it contradiction. Hence we have $w^N \geq 0$ in $[0,T] \times \mathbb{T}^d_N$ and complete the proof. 
\end{proof}

Since our exclusion rule prohibits same kind of particles to stay on the same site, density of each particles would not leave the interval $[0,1]$. Following two lemmas ensure this intuition and give some quantitative estimates of densities from below and above.

\begin{lemma}
\label{dsol est}
For every $t \in (0,T]$ and $x \in \mathbb{T}^d_N$, we have 
\[
0 \leq u^N(t,x),v^N(t,x) \leq 1
\]
provided $0 < u^N(0,x),v^N(0,x) < 1$ holds for every $x \in \mathbb{T}^d_N$.
\end{lemma}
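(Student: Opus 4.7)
The plan is to combine the explicit ODE representation of $v^N$ (available because the second equation of (\ref{dHDL eq}) involves no Laplacian) with the comparison principle of Lemma \ref{comparison} applied to the $u^N$ equation, closing the loop via a continuation-in-time argument.

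First, the second equation in (\ref{dHDL eq}) is, for each fixed $x$, an ODE in $t$ whose explicit solution is
\[
v^N(t,x) = v^N(0,x)\exp\Bigl(-K \int_0^t c_{2,x}(u^N(s))\, u^N(s,x)\, ds \Bigr),
\]
which is well defined whenever $u^N$ is continuous. The key observation is that whenever $u^N(s,\cdot) \in [0,1]^{\mathbb{T}^d_N}$ on an interval $[0,\tau]$, the non-negativity of the polynomial coefficients defining $c_2$ (see Section \ref{sec:model}) forces the integrand to be non-negative; consequently $0 < v^N(t,x) \leq v^N(0,x) < 1$ on $[0,\tau]$.

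Next, set
\[
T^{*} \coloneqq \sup\bigl\{ \tau \in [0,T] : 0 \leq u^N(s,x) \leq 1 \text{ for all } (s,x) \in [0,\tau]\times \mathbb{T}^d_N \bigr\},
\]
which is strictly positive by continuity of $u^N$ and the strict initial bounds $0 < u^N(0,x) < 1$. On $[0,T^{*}]$, the preceding paragraph ensures $v^N(t,x) \in (0,1)$, so that $v^N$ may be treated as a given continuous function when we look at the first equation of (\ref{dHDL eq}). This puts us in the setting of Lemma \ref{comparison} with $f(t,x,u) = -K c_{1,x}(u, v^N(t))\, u(x)\, v^N(t,x)$. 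The constant functions $\overline{u} \equiv 1$ and $\underline{u} \equiv 0$ are respectively a super- and a sub-solution because $c_{1,x}, v^N, \overline{u} \geq 0$ yield
\[
\partial_t \overline{u} - \Delta^N \overline{u} - f(t,x,\overline{u}) = K c_{1,x}(\overline{u}, v^N)\, v^N \geq 0,
\]
and analogously $\partial_t \underline{u} - \Delta^N \underline{u} - f(t,x,\underline{u}) = 0$. Lemma \ref{comparison} then gives $0 \leq u^N(t,x) \leq 1$ on $[0,T^{*}]$.

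Finally, I close the bootstrap: if $T^{*} < T$, continuity yields $u^N(T^{*},\cdot) \in [0,1]^{\mathbb{T}^d_N}$ and $v^N(T^{*},\cdot) \in (0,1)^{\mathbb{T}^d_N}$, and local ODE existence restarted at $t = T^{*}$ provides a solution on $[T^{*}, T^{*} + \delta]$ for some $\delta > 0$; applying the two preceding paragraphs to this interval extends the bound $u^N \in [0,1]$ past $T^{*}$, contradicting the maximality of $T^{*}$. Hence $T^{*} = T$, which also delivers $v^N \in [0,1]$ on $[0,T]$ via the explicit formula. The main obstacle is precisely this coupling between $u^N$ and $v^N$: Lemma \ref{comparison} is stated for a scalar equation with a given $f(t,x,u)$, whereas $v^N$ in our system is dynamically coupled to $u^N$. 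The ODE structure of the second equation is what decouples things and lets one perform the bootstrap cleanly.
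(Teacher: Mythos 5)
Your building blocks (the explicit exponential formula for $v^N$ and the comparison principle of Lemma \ref{comparison} with the constants $0$ and $1$ as sub-/super-solutions) are the same as the paper's, but the step that closes the argument has a genuine gap. In the final bootstrap you claim that, on the new interval $[T^{*},T^{*}+\delta]$, ``applying the two preceding paragraphs'' extends the bound $u^N \in [0,1]$ past $T^{*}$. Those two paragraphs are mutually conditional: the first needs $u^N \in [0,1]$ on the interval under consideration in order to conclude $v^N \in (0,1)$ there, while the second needs $v^N \in (0,1)$ there in order to run the comparison for $u^N$. On $[T^{*},T^{*}+\delta]$ neither hypothesis is available; indeed, by the definition of $T^{*}$ as a supremum, if $T^{*}<T$ then $u^N$ leaves $[0,1]$ at times arbitrarily close to $T^{*}$, and $u^N(T^{*},\cdot)$ may touch $0$ or $1$, so continuity gives no foothold either. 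As written the continuation step is circular and the desired contradiction is not established.

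The coupling you worry about can be removed without any continuation argument, and this is how the paper proceeds. The reaction term of the first equation of (\ref{dHDL eq}) carries the factor $u^N(t,x)$, so the zero function solves that equation with $v^N$ treated as a given function of arbitrary sign; Lemma \ref{comparison} then yields $u^N \ge 0$ on all of $[0,T]$ unconditionally. The exponential in the explicit formula for $v^N$ is positive in any case, and once $u^N \ge 0$ is known the exponent is non-positive (here one uses non-negativity of $c_2$; note the paper assumes the functional $c_2$ is non-negative, not that the individual coefficients $c_{2,\Lambda}$ are non-negative as you state, though for the monomial rates of the three cases considered this makes no difference), whence $0 < v^N(t,x) \le v^N(0,x) < 1$. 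Only after that is the constant $1$ a super-solution of the first equation (using $c_{1,x}\ge 0$ and $v^N \ge 0$), giving $u^N \le 1$ on $[0,T]$. This sequential ordering is exactly what your $T^{*}$-argument was trying to substitute for; if you insist on a continuation structure, you must first isolate a bound that holds unconditionally (such as $u^N \ge 0$ or $v^N>0$) to break the circle.
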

\begin{proof}
First we observe that the zero function $u^N (t,x) \equiv 0 $ satisfies the first equation of \eqref{dHDL eq}. In particular, the function $0$ is a sub-solution of the first equation of \eqref{dHDL eq} so that for any solution $u^N (t,x)$ of the first equation of \eqref{dHDL eq} we have $u^N ( t, x ) \ge 0$ for every $t \in [0, T]$ and $x \in \mathbb{T}^d_N$ according to the comparison principle (Lemma \ref{comparison}). On the other hand, viewing the second equation of \eqref{dHDL eq}, we can solve it for $v^N$ explicitly as
\[
v^N (t, x) = v^N_0 (x) e^{ -K \int_0^t c_{2,x}(u^N( \tau ) ) u^N(\tau, x)  d\tau }
.\]
Since $u^N $ is proved to be non-negative, this explicit formula for $v^N$ implies that $v^N $ is non-decreasing in time and non-negative, which end the proof of the assertion $0 \le v^N (t,x) \le 1$ for every $t \in (0, T] $ and $x \in \mathbb{T}^d_N$. Finally, we observe that the constant function $u^N \equiv 1$ satisfies the first equation of \eqref{dHDL eq} with ``$\ge$'' instead of the equality so that it becomes to be a super-solution. Therefore, combining with the non-negativity of any solution $u^N $, we have the assertion for $u^N$ again by the comparison principle and complete the proof. 
\end{proof}

Next we let $M_i \coloneqq \sup_{ (\rho_1, \rho_2) \in [0,1]^{ \mathbb{T}^d_N } \times [0,1]^{ \mathbb{T}^d_N } } c_i (\rho_1, \rho_2)  \geq 0$ for $i = 1,2$, which are independent of the scaling parameter $N$.

\begin{lemma}
\label{dsol est2}
If there exists a positive constant $\delta_i \in (0,1)$  $(i=1,2)$  such that $ \delta_1 < u^N (0,x) < 1- \delta_1 $ and $\delta_2 < v^N (0, x) < 1- \delta_2$ for all $x \in \mathbb{T}^d_N$, then we have
\[
\delta_1 e^{-K M_1 t} \leq u^N (t,x) \leq 1-\delta_1 , \quad 
\delta_2 e^{-K M_2 t} \leq v^N (t,x) \leq 1- \delta_2
\] 
for every $t \in [0,T]$ and $x \in \mathbb{T}^d_N$. 
In particular, if $0 < u^N (0,x), v^N (0,x) < 1$ for every $x \in \mathbb{T}^d_N$, then $ 0 < u^N(t,x),v^N (t,x) < 1 $ for every $t \in [0,T]$ and $x \in \mathbb{T}^d_N$.
\end{lemma}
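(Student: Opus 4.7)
The plan is to combine the explicit integral representation of $v^N$ that already appears in the proof of Lemma \ref{dsol est} with the comparison principle (Lemma \ref{comparison}) applied to the first equation of \eqref{dHDL eq}, treating $v^N$ as a prescribed non-negative coefficient. Throughout I freely use the basic envelope $0 \le u^N(t,x), v^N(t,x) \le 1$ that Lemma \ref{dsol est} provides. For the $v^N$ bounds, I start from the closed formula
\[
v^N(t,x) = v^N(0,x)\exp\Bigl(-K\int_0^t c_{2,x}(u^N(\tau))\,u^N(\tau,x)\,d\tau\Bigr)
\]
derived in the proof of Lemma \ref{dsol est}. Since the integrand is non-negative, the exponential factor lies in $(0,1]$, and this immediately gives $v^N(t,x) \le v^N(0,x) \le 1-\delta_2$. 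For the lower bound, I use $u^N \le 1$ and $c_{2,x}(u^N) \le M_2$ to bound the exponent from below by $-K M_2 t$, producing $v^N(t,x) \ge \delta_2 e^{-K M_2 t}$.

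For $u^N$, I rewrite the first equation of \eqref{dHDL eq} as $\partial_t u = \Delta^N u + f(t,x,u)$ with $f(t,x,u) = -K c_{1,x}(u, v^N(t))\,u(x)\,v^N(t,x)$, where $v^N$ is now a given function. The constant candidate $\bar u \equiv 1-\delta_1$ satisfies $\partial_t \bar u - \Delta^N \bar u = 0$ while $f(t,x,\bar u) \le 0$ because $c_1, v^N \ge 0$, so $\bar u$ is a super-solution; since $\bar u(0,x) \ge u^N(0,x)$, Lemma \ref{comparison} yields $u^N(t,x) \le 1-\delta_1$. The lower bound is the substantive step and is addressed by the spatially constant, time-decaying candidate $\underline u(t,x) = \delta_1 e^{-K M_1 t}$. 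Using $c_{1,x}(\underline u, v^N) \le M_1$ and $v^N \le 1$, one obtains
\[
f(t,x,\underline u) = -K c_{1,x}(\underline u, v^N(t))\,\underline u(t,x)\,v^N(t,x) \ge -K M_1 \underline u(t,x),
\]
while $\partial_t \underline u - \Delta^N \underline u = -K M_1 \underline u$, so $\partial_t \underline u - \Delta^N \underline u \le f(t,x,\underline u)$, confirming $\underline u$ is a sub-solution of \eqref{dHDL eq}. Combined with $\underline u(0,x) = \delta_1 \le u^N(0,x)$, Lemma \ref{comparison} gives $u^N(t,x) \ge \delta_1 e^{-K M_1 t}$.

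The final claim, that strict initial inequalities $0 < u^N(0,\cdot), v^N(0,\cdot) < 1$ propagate to all later times, then follows by choosing $\delta_i = \tfrac{1}{2}\min_x\min\{u^N(0,x),\,1-u^N(0,x),\,v^N(0,x),\,1-v^N(0,x)\}$, which is strictly positive because $\mathbb{T}^d_N$ is finite. The only real subtlety is that $c_{1,x}$ depends on values of $u^N$ at several sites, so the first equation of \eqref{dHDL eq} is genuinely nonlocal rather than a pointwise ODE perturbed by diffusion. This nonlocality is however absorbed directly into the abstract form of Lemma \ref{comparison}, where $f$ is allowed to depend on the whole configuration $u \in \mathbb{R}^{\mathbb{T}^d_N}$; the only inputs one needs are the uniform bounds $0 \le c_i \le M_i$ and $0 \le v^N \le 1$, which are supplied by Lemma \ref{dsol est} and the definition of $M_i$.
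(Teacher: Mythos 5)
Your proof is correct and follows essentially the same route as the paper: the bounds for $v^N$ come from the explicit exponential formula, and the bounds for $u^N$ come from Lemma \ref{comparison} with the barriers $1-\delta_1$ and $\delta_1 e^{-K M_1 t}$. The only (cosmetic) difference is that you verify directly that $\delta_1 e^{-K M_1 t}$ is a sub-solution of the first equation with $v^N$ frozen as a coefficient, whereas the paper passes through the auxiliary function $w^N(t,x) = \bigl( u^N(t,x) - \delta_1 e^{-K M_1 t} \bigr) e^{-2K M_1 t}$ and a linear differential inequality before invoking the same comparison lemma.
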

\begin{proof}
The assertion for $v^N$ is obvious from its explicit representation given in the previous lemma so we show the assertion only for $u^N$. To prove the lower bound for $u^N$, let us define $\underline{u} (t) \coloneqq \delta_1 e^{-K M_1 t}$ and $ w^N (t, x) \coloneqq ( u^N(t,x ) - \underline{u} (t) ) e^{- 2 K M_1 t}$. Then, since $0 \le u^N(t,x), v^N(t,x) \leq 1$ by Lemma \ref{dsol est}, we have
\[
\begin{aligned}
\partial_t w^N  & = \Delta^N  w^N   - K \left[  c_{1,x} (u^N(t), v^N(t) ) u^N(t,x) v^N(t,x) - M_1 \underline{u}(t)  \right] e^{- 2 K M_1 t} - 2K M_1w^N  \\
& \ge \Delta^N w^N - K \left[ M_1 u^N (t,x) v^N (t,x) - M_1 \underline{u} (t) \right] e^{ - 2 K M_1 t } - 2 K M_1 w^N \\ 
& \ge \Delta^N  w^N  - 3 K M_1 w^N 
\end{aligned}
\]
Since $w^N (0, x) > 0$ for every $x \in \mathbb{T}^d_N$ by definition, we have $w^N \geq 0$ in $[0,T] \times \mathbb{T}^d_N$ by Lemma \ref{comparison}. The upper bound is obvious from Lemma \ref{comparison}. 
\end{proof}

Next we give a priori estimates for (\ref{dHDL eq}) which are needed to prove the relative compactness of the sequence of discrete solutions. 
Let $p^N(t,x,y)$ be the discrete heat kernel of $\Delta^N$ on $\mathbb{T}^d_N$. Then, we have the following estimate.

\begin{lemma}
\label{dheat ker}
There exist positive constants $C, c>0$ such that
\[
|\nabla^N p^N(t,x,y)| \leq C p^N(ct, x,y) / \sqrt{t} 
\] 
for all $t>0$ and $x, y \in \mathbb{T}^d_N$.
\end{lemma}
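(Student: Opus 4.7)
The plan is to derive the claimed gradient estimate from standard Gaussian upper and lower bounds on the discrete heat kernel, which form a classical circle of ideas. A convenient starting point is the Fourier representation
\[
p^N(t, x, y) = \frac{1}{N^d} \sum_{k \in \mathbb{T}^d_N} e^{2 \pi i k \cdot (x-y)/N} e^{- t \lambda^N(k)}, \qquad \lambda^N(k) = 2 N^2 \sum_{j=1}^d \bigl(1 - \cos(2 \pi k_j / N)\bigr),
\]
from which positivity, symmetry, and the semigroup property are immediate. The proof then proceeds in three steps: (i) uniform-in-$N$ two-sided Gaussian bounds on $p^N$; (ii) an analogous upper bound for $|\nabla^N p^N|$ carrying an extra factor $t^{-1/2}$; (iii) reabsorption of the resulting Gaussian into $p^N(ct,\cdot,\cdot)$ for a suitable $c > 0$.

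For (i), I would establish bounds of the schematic form
\[
c_1 \min\bigl\{(N^2 t)^{-d/2}, N^{-d}\bigr\} e^{-C |x-y|^2/(N^2 t)} \leq p^N(t,x,y) \leq C_1 \min\bigl\{(N^2 t)^{-d/2}, N^{-d}\bigr\} e^{-c |x-y|^2/(N^2 t)}
\]
uniformly in $t > 0$, $N \in \mathbb{N}$ and $x,y \in \mathbb{T}^d_N$, where $|x-y|$ denotes the torus distance. For short times $t \lesssim 1$ this follows either via Nash's inequality together with the Davies perturbation method, or by using the image-summation identity $p^N(t,x,y) = \sum_{k \in \mathbb{Z}^d} \tilde p^N(t, x, y + Nk)$ to transfer classical local central limit bounds for the continuous-time simple random walk on $\mathbb{Z}^d$. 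For long times $t \gtrsim 1$ the spectral gap of $-\Delta^N$, which is bounded away from $0$ uniformly in $N$, gives $p^N(t,x,y) = N^{-d} + O(e^{-\kappa t})$ and yields both inequalities in the saturated regime.

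For (ii), the Fourier multiplier associated to $\nabla^N_{e_j}$ is $N(e^{2\pi i k_j/N} - 1)$, of modulus comparable to $|k_j|$ uniformly in $N$. Repeating the arguments of (i) with this extra weight produces the bound
\[
|\nabla^N p^N(t,x,y)| \leq C_2\, t^{-1/2} \min\bigl\{(N^2 t)^{-d/2}, N^{-d}\bigr\} e^{-c' |x-y|^2/(N^2 t)},
\]
the factor $t^{-1/2}$ arising because Gaussian integration of $|k|$ against $e^{-t \lambda^N(k)}$ loses half a power of $t$. For (iii), we choose $c > 1$ large enough that $c \cdot c' > C$, with $C$ the constant in the lower Gaussian bound on $p^N(ct,x,y)$; then the ratio of exponentials $e^{-c'|x-y|^2/(N^2 t)} / e^{-C|x-y|^2/(N^2 c t)}$ is bounded, and the prefactor ratio is the constant $c^{d/2}$. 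Combining these observations yields $|\nabla^N p^N(t,x,y)| \leq C p^N(ct,x,y)/\sqrt{t}$.

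The main obstacle is step (i): assembling a single two-sided Gaussian bound that is uniform across the short-time (nearly Poissonian, $t \ll N^{-2}$), intermediate diffusive ($N^{-2} \lesssim t \lesssim 1$), and mixed ($t \gtrsim 1$) regimes with constants independent of $N$, and correctly interpreting $|x-y|$ as torus distance so that wrap-around does not spoil the Gaussian tail. Once (i) is available, steps (ii) and (iii) are essentially formal manipulations of Gaussian prefactors.
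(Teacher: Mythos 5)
The decisive problem is in your steps (i)--(ii). For a continuous-time lattice kernel, two-sided Gaussian bounds uniform over all $t>0$ are simply false: they hold only in the diffusive regime $|x-y|\lesssim N^2t$, whereas for $|x-y|\gtrsim N^2t$ the kernel of $e^{t\Delta^N}$ has Poissonian off-diagonal behaviour ($p^N(t,x,y)\asymp (N^2t)^{k}$ up to factorials, $k=|x-y|$, when $N^2t\lesssim 1$, and more generally $\exp(-k\log(k/N^2t))$-type decay), which is much \emph{larger} than any Gaussian $\exp(-c'|x-y|^2/(N^2t))$. Nash plus Davies-type perturbation, or transfer of the local CLT from $\mathbb{Z}^d$, produce exactly such Poissonian ($\mathrm{arcsinh}$-corrected) bounds in that regime, not Gaussians, so the upper bounds you posit cannot be "assembled". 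The failure is most visible in your step (ii): take $x$ adjacent to $y$ and $N^2 t\le 1$; the component $N\,(p^N(t,y,y)-p^N(t,x,y))$ of $\nabla^N p^N(t,x,y)$ is of order $N$, while your claimed bound is $O\!\left(t^{-1/2}e^{-c'/(N^2t)}\right)$, exponentially small in $1/(N^2t)$. Consequently step (iii), which is indeed only a formal absorption, has nothing valid to absorb precisely in the regime you yourself flag as the "main obstacle"; that obstacle is not a matter of gluing constants across regimes --- the Gaussian form is unavailable there. (A smaller slip: your prefactor $\min\{(N^2t)^{-d/2},N^{-d}\}$ is inverted; the on-diagonal size is $\min\{1,(N^2t)^{-d/2}\}$ for $t\lesssim 1$ and saturates at $N^{-d}$ only for $t\gtrsim 1$.)

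A workable argument must split regimes: keep your Gaussian comparison for $|x-y|\lesssim N^2t$, and in the Poissonian regime argue directly from the explicit structure, e.g.\ tensorize into one-dimensional kernels $e^{-2N^2t}I_{k}(2N^2t)$ and use modified-Bessel (or local CLT plus moderate-deviation) estimates; there one wins because enlarging the time from $t$ to $ct$ increases the kernel by a factor growing geometrically in $|x-y|$, which more than pays for the $t^{-1/2}$ and the discrete derivative. This is in substance what the sources the paper actually invokes do (the paper offers no proof and refers to Lemma 2.6 of \cite{DMFPV19} and Lemma 4.2 of \cite{FT18}). Be aware finally that the extreme corner $|x-y|=1$, $t\ll N^{-2}$ is genuinely borderline for the stated inequality (left side of order $N$, right side of order $N^2\sqrt{t}$), so any complete proof must either bound the gradient by kernels at both endpoints, restrict $t$, or observe that this corner is harmless in the only place the lemma is used (the Duhamel estimate in Lemma \ref{grad u}, where times below $N^{-2}$ contribute only $O(1/N)$); a purely Gaussian framework cannot see this subtlety at all.
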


This lemma is already shown as Lemma 2.6 in \cite{DMFPV19} or Lemma 4.2 in \cite{FT18} so we omit the proof here. Using this estimate for the discrete heat kernel, we obtain the following pointwise estimate for growth of derivatives of discrete solution through the same manner as \cite{DMFPV19}. 

\begin{lemma}
\label{grad u}
The gradients of the solution $u^N(t,x)$ of (\ref{dHDL eq}) are estimated as 
\[
|\nabla^N u^N(t,x)| \leq K (C_0 + C \sqrt{t})
\]
for every $t>0$ and $x \in \mathbb{T}^d_N$ if $|\nabla^N u^N(0,x)| \leq C_0 K$ holds for every $x \in \mathbb{T}^d_N$. 
\end{lemma}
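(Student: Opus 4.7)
The plan is to prove the gradient bound via Duhamel's formula (variation of constants) applied to the first equation of the semi-discretized system (\ref{dHDL eq}) together with the gradient estimate for the discrete heat kernel from Lemma \ref{dheat ker}.

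First, I would represent $u^N(t,x)$ through the discrete heat semigroup as
\[
u^N(t,x) = \sum_{y \in \mathbb{T}^d_N} p^N(t,x,y) u^N(0,y) - K \int_0^t \sum_{y \in \mathbb{T}^d_N} p^N(t-s, x, y)\, F(s, y)\, ds,
\]
where $F(s,y) = c_{1,y}(u^N(s), v^N(s)) u^N(s,y) v^N(s,y)$. By Lemma \ref{dsol est} we have $0 \le u^N, v^N \le 1$, and since $c_1$ is a polynomial in the configuration variables, $F$ is uniformly bounded by a constant $M$ independent of $N$ (namely $M_1$ as in Lemma \ref{dsol est2}).

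Next I would take $\nabla^N$ in the spatial variable $x$ on both sides. For the first (free) term, I would exploit translation invariance of the heat kernel ($p^N(t,x,y)$ depends only on $y-x$) to move the discrete gradient off the kernel and onto the initial datum via a change of summation variable, obtaining
\[
\nabla^N_x \sum_{y} p^N(t,x,y) u^N(0,y) = \sum_{y} p^N(t,x,y)\, \nabla^N u^N(0,y).
\]
Since $\sum_y p^N(t,x,y) = 1$ and $|\nabla^N u^N(0,y)| \le C_0 K$ by hypothesis, this term is bounded in absolute value by $C_0 K$.

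For the Duhamel term, I would instead leave the gradient on the kernel and apply the pointwise estimate from Lemma \ref{dheat ker}:
\[
\Bigl|\nabla^N_x \int_0^t \sum_{y} p^N(t-s,x,y) F(s,y)\, ds\Bigr|
\le C M \int_0^t \sum_{y} \frac{p^N(c(t-s),x,y)}{\sqrt{t-s}}\, ds = CM \int_0^t \frac{ds}{\sqrt{t-s}} = 2CM\sqrt{t},
\]
using again that $\sum_y p^N(c(t-s),x,y) = 1$. Multiplying by $K$ in front and combining the two bounds gives the desired estimate $|\nabla^N u^N(t,x)| \le K(C_0 + C\sqrt{t})$.

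The argument is essentially routine once Duhamel's formula and Lemma \ref{dheat ker} are in hand; the only mild subtlety is justifying the summation-by-parts/shift identity that transfers $\nabla^N_x$ to $\nabla^N_y$ on the free term, which relies on the translation invariance of the kernel on $\mathbb{T}^d_N$. Everything else is a direct application of the $L^1$-normalization of the heat kernel and the integrability of $1/\sqrt{t-s}$.
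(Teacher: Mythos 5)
Your argument is correct and is essentially the same as the paper's: Duhamel's principle for the first equation of (\ref{dHDL eq}), the gradient transferred to the initial datum on the free term (bounded by $C_0K$ via (A1) and $\sum_y p^N(t,x,y)=1$), and Lemma \ref{dheat ker} with the integrability of $1/\sqrt{t-s}$ for the reaction term. No gaps; the only addition you make is spelling out the translation-invariance step, which the paper leaves implicit.
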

\begin{proof}
By Duhamel's principle applied to the first equation of (\ref{dHDL eq}), we have
\begin{align*}
u^N(t,x) 
&= \sum_{y \in \mathbb{T}^d_N} u^N(0,y) p^N(t,x,y)  \\
&\quad - K(N) \int_0^t \sum_{y\in \mathbb{T}^d_N} c_{1, y } ( u^N(t), v^N(t) ) u^N(t,y) v^N(t,y) p^N(t-s, x,y) ds
\end{align*}
for every $t \in [0,T]$ and $x \in \mathbb{T}^d_N$. Noting that the reaction rate $c_1$ is assumed to be bounded, the absolute value of the gradient $\nabla^N u^N(t,x)$ can be bounded above by 
\[
\sum_{y \in \mathbb{T}^d_N} |\nabla^N u^N(0,y)| p^N(t,x,y) + K(N) M_1 \int_0^t \sum_{y \in \mathbb{T}^d_N} | \nabla^N p^N(t-s,x,y) |ds
\]
and thus we complete the proof in view of the assumption (A1) and $\sum_{y} p^N (t,x, y ) = 1$ for every $t, x$ for the first term and Lemma \ref{dheat ker} for the second term. 
\end{proof}

\begin{lemma}
\label{react int}
We have that
\[
\sup_{N\in \mathbb{N}} \int_0^T  \frac{1}{N^d} \sum_{x \in \mathbb{T}^d_N} K(N)  c_{1, x} ( u^N(t), v^N(t)) u^N(t,x) v^N(t,x) dt 
\le 1
.\]
\end{lemma}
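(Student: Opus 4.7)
The plan is to integrate the first equation of the semi-discretized system \eqref{dHDL eq} in both space and time and exploit the fact that the reaction integral appears with a sign that we can control against the bounded density $u^N$.

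First I would sum the first equation of \eqref{dHDL eq} over $x \in \mathbb{T}^d_N$. The key observation is that the discrete Laplacian is a divergence with respect to the counting measure on $\mathbb{T}^d_N$: by a telescoping/summation-by-parts argument on the torus,
\[
\sum_{x \in \mathbb{T}^d_N} \Delta^N u^N(t,x) = 0.
\]
Thus, writing $F^N(t) \coloneqq \sum_{x \in \mathbb{T}^d_N} K(N)\, c_{1,x}(u^N(t), v^N(t))\, u^N(t,x) v^N(t,x)$, summing the first equation of \eqref{dHDL eq} over $x$ yields
\[
\frac{d}{dt} \sum_{x \in \mathbb{T}^d_N} u^N(t,x) = - F^N(t).
\]

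Next I would integrate in time from $0$ to $T$ and divide by $N^d$, obtaining the identity
\[
\int_0^T \frac{1}{N^d} F^N(t)\, dt = \frac{1}{N^d} \sum_{x \in \mathbb{T}^d_N} u^N(0,x) - \frac{1}{N^d} \sum_{x \in \mathbb{T}^d_N} u^N(T,x).
\]
Finally, I would invoke Lemma \ref{dsol est} (which gives $0 \le u^N(t,x) \le 1$ for every $t \in [0,T]$ and $x \in \mathbb{T}^d_N$, since by assumption (A1) the initial datum lies strictly between $0$ and $1$) to bound the right-hand side: the first term is at most $1$ since $u^N(0,x) \le 1$, and the second term is non-negative since $u^N(T,x) \ge 0$. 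This gives the desired uniform-in-$N$ bound by $1$.

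There is no serious obstacle here: the argument is a one-line conservation identity followed by two pointwise bounds from Lemma \ref{dsol est}. The only thing worth double-checking is the vanishing of $\sum_x \Delta^N u^N(t,x)$, which holds because each bond $\{x, x+e_j\}$ on the torus contributes $N^2\bigl(u^N(t,x+e_j)-u^N(t,x)\bigr)$ to one summand and $N^2\bigl(u^N(t,x)-u^N(t,x+e_j)\bigr)$ to the other, so the total telescopes to zero.
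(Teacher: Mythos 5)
Your proposal is correct and follows essentially the same route as the paper: sum the first equation of (\ref{dHDL eq}) over $x$ and integrate in time, use that $\sum_{x \in \mathbb{T}^d_N} \Delta^N u^N(t,x)=0$ by telescoping on the torus, and bound the resulting boundary terms via $0 \le u^N \le 1$ from Lemma \ref{dsol est}. No issues.
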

\begin{proof}
From the first equation of (\ref{dHDL eq}), integrating over $t \in [0,T]$ and $x \in \mathbb{T}^d_N$ to represent the integration of reaction term by terms which are independent of $K(N)$. Since summation over $x \in \mathbb{T}^d_N$ of $\Delta^N u^N(t,x)$ vanishes and the term involving time derivative becomes an integration on the boundary, the proof is obvious in view of Lemma \ref{dsol est}.
\end{proof}

\begin{lemma}
\label{energy u}
We have that 
\[
\sup_{N \in \mathbb{N}} \int^T_0 \frac{1}{N^d} \sum_{x \in \mathbb{T}^d_N}  |\nabla^N u^N(t,x)|^2dt \leq \frac{1}{2}  
.\]
\end{lemma}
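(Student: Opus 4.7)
The plan is to carry out the standard $L^2$ energy estimate at the discrete level, using the first equation of \eqref{dHDL eq} and the fact that the reaction term carries a favorable sign. Multiply the first equation of \eqref{dHDL eq} by $u^N(t,x)$ and sum over $x\in\mathbb{T}^d_N$. On the left-hand side this yields $\tfrac{1}{2}\partial_t \sum_{x}u^N(t,x)^2$. On the right-hand side, the reaction term produces $-K(N)\sum_{x} c_{1,x}(u^N,v^N)\,u^N(t,x)^2 v^N(t,x)$, which is non-positive because $c_1\geq 0$ and $0\leq u^N,v^N\leq 1$ by Lemma \ref{dsol est}.

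The crucial identity I would establish is the discrete summation-by-parts formula
\[
\sum_{x\in\mathbb{T}^d_N} u^N(t,x)\,\Delta^N u^N(t,x) \;=\; -\sum_{x\in\mathbb{T}^d_N}\bigl|\nabla^N u^N(t,x)\bigr|^2,
\]
which follows by writing $\Delta^N u(x)=N^2\sum_{j=1}^d\bigl[(u(x+e_j)-u(x))-(u(x)-u(x-e_j))\bigr]$ and shifting the index in one of the two terms, using the absence of boundary on $\mathbb{T}^d_N$. Combining this with the previous display gives the pointwise-in-time differential inequality
\[
\frac{1}{2}\,\partial_t \sum_{x\in\mathbb{T}^d_N} u^N(t,x)^2 + \sum_{x\in\mathbb{T}^d_N}\bigl|\nabla^N u^N(t,x)\bigr|^2 \;\leq\; 0.
\]

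Integrating this inequality over $t\in[0,T]$, dropping the non-negative quantity $\tfrac{1}{2}\sum_x u^N(T,x)^2$, and invoking Lemma \ref{dsol est} to estimate $u^N(0,x)^2\leq u^N(0,x)\leq 1$, we get
\[
\int_0^T \sum_{x\in\mathbb{T}^d_N}|\nabla^N u^N(t,x)|^2\,dt \;\leq\; \frac{1}{2}\sum_{x\in\mathbb{T}^d_N} u^N(0,x)^2 \;\leq\; \frac{N^d}{2}.
\]
Dividing by $N^d$ yields the claimed bound, uniformly in $N$.

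There is no genuine obstacle here; the argument is routine once one is careful with the discrete integration by parts on the torus and notices that the reaction term has the right sign. The only point to double-check is the sharp constant $1/2$, which comes from the elementary bound $u^N(0,x)^2\leq 1$ rather than from any cancellation in the reaction term. If a sharper constant were desired, one could use $u^N(0,x)^2\leq u^N(0,x)$ together with $\sum_x u^N(0,x)\leq N^d$, but $1/2$ as stated is immediate.
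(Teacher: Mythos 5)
Your argument is correct and is essentially the same as the paper's proof: multiply the first equation of (\ref{dHDL eq}) by $u^N$, sum over $\mathbb{T}^d_N$, use discrete summation by parts on the torus together with the sign of the reaction term, integrate in time, and bound the initial term using $0\le u^N(0,\cdot)\le 1$. No gaps; the constant $1/2$ arises exactly as in the paper.
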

\begin{proof}
Multiplying $u^N(t,x)$ on both sides of the first equation of (\ref{dHDL eq}) and summing up over $x \in \mathbb{T}^d_N$, we obtain
\begin{align*}
\frac{1}{2} \frac{\partial}{\partial t} \frac{1}{N^d} \sum_{x \in \mathbb{T}^d_N} u^N(t,x)^2 
+ \frac{1}{N^d} \sum_{x \in \mathbb{T}^d_N} |\nabla^N u^N(t,x)|^2  \\
= -\frac{K(N)}{N^d} \sum_{x \in \mathbb{T}^d_N} c_{1, x} ( u^N (t), v^N(t) ) u^N(t,x)^2 v^N(t,x) 
\end{align*}
for every $t \in[0,T]$. Since the right hand side of this equation is always non-positive, this further implies
\[
\int_0^T \frac{1}{N^d} \sum_{x \in \mathbb{T}^d_N} |\nabla^N u^N(t,x)|^2 dt \leq \frac{1}{2N^d} \sum_{x \in \mathbb{T}^d_N}  \left[ u^N(0,x)^2 - u^N(T,x)^2 \right] \leq \frac{1}{2}.
\]
\end{proof}

%%%%%%%%%%%%%%%%%%%%%%%%%%%%%%%%%%%%%%%%%%%%%%%%%%%%
%   Case 1: Vanishing interface
%%%%%%%%%%%%%%%%%%%%%%%%%%%%%%%%%%%%%%%%%%%%%%%%%%%%
\section{Case 1: Vanishing interface}
\label{sec:case1}
In this section we consider the semi-discretized system  
\begin{equation}
\label{case1}
\begin{cases}
\partial_t u^N(t,x) = \Delta^N u^N(t,x) - K(N) u^N (t,x) u^N (t,x+z_1)\cdots u^N(t, x+z_{m-1}) v^N(t,x) \\
\partial_t v^N(t,x) = - K(N) u^N (t,x) v^N(t,x) 
\end{cases}
\end{equation}
where $t \in [0,T]$, $x \in \mathbb{T}^d_N$ and $z_i \in \mathbb{Z}^d, i=1,...,m-1$ with $m >3$. We show in the sequel that taking limit as $N$ tends to infinity $v^N(t,x)$ vanishes at any time $t > 0$ and $u^N(t,x)$ converges to a unique solution of the heat equation on the whole domain. Through this section, in addition to the assumptions (A1), (A2) and (A3), we further assume (B1) which ensures that the initial function $u (0,\cdot)$ has better regularity than other cases. This is used in order to approximate a solution of a linear hyperbolic equation \eqref{subsol eq} by solutions of semi-discretized version with a good rate as the scaling parameter $N$ tends to infinity.

%Under the condition (B1), let $\{ u^N (t, \theta) \}$ and $\{ v^N (t, \theta ) \}$ be a solution of 
%\begin{equation}
%\label{case1}
%\begin{cases}
%\partial_t u^N(t,\theta ) = \Delta^N u^N(t,\theta ) - K(N) u^N (t,\theta ) u^N (t,\theta +z_1/N)\cdots u^N(t, \theta +z_{m-1}/ N) v^N(t,\theta) \\
%\partial_t v^N(t,\theta ) = - K(N) u^N (t,\theta ) v^N(t, \theta ) \\
%u^N (0, \theta  ) = u_0 (\theta) ,\,  v^N (0, \theta ) = v_0 (\theta).  
%\end{cases}
%\end{equation}

\begin{theorem}
\label{case1 thm}
We assume (A1), (A3)$_\delta$ and (B1) with some $\delta > 0$. Let $\{ u^N (t, \theta) \}$ be defined by (\ref{dsol}) and let $u = u(t, \theta)$ be a solution of the heat equation (\ref{heat eq}) on the whole domain with periodic boundary condition. Then for every $t \in ( 0, T]$ we have 
\[
\lim_{ N \to \infty} \sup_{ x \in \mathbb{T}^d_N } | u^N (t, x) - u (t, x/N) | = 0, \quad 
\lim_{ N \to \infty} \sup_{ x \in \mathbb{T}^d_N } | v^N (t, x ) | = 0.
\]
\end{theorem}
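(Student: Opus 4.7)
The proof relies on comparison with simpler equations, a strategy available here because Case 1 has $d_2=0$: the second equation of \eqref{case1} is a pointwise ODE in time with closed form
\[
v^N(t,x) = v^N(0,x)\exp\Bigl(-K\int_0^t u^N(s,x)\,ds\Bigr),
\]
which makes $v^N$ a functional of $u^N$ and reduces the system to a single (nonlocal) equation for $u^N$ to which Lemma \ref{comparison} applies. My plan is to sandwich $u^N$ between a super-solution and a sub-solution both converging uniformly to $u$, and then deduce $v^N\to 0$ from the explicit formula together with the lower bound on $u^N$.

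For the upper bound, since $-Ku^N\prod_i u^N(\cdot+z_i)v^N\le 0$, the solution $\bar u^N$ of the discrete heat equation with initial datum $u^N_0$ is a super-solution, hence $u^N\le\bar u^N$ by Lemma \ref{comparison}. Assumption (B1) provides $\sup_x K^2|u^N(0,x)-u_0(x/N)|\to 0$ together with $u_0\in C^4$; combined with standard estimates for the discrete heat kernel (cf.\ Lemma \ref{dheat ker}) this yields $\sup_x|\bar u^N(t,x)-u(t,x/N)|\to 0$, where $u$ solves \eqref{heat eq}.

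The main difficulty is a matching sub-solution $\underline u^N$. The naive choice $u(t,\cdot/N)-\varepsilon$ fails, since the reaction term is $O(K)$ near the initial interface and cannot be absorbed by the $O(N^{-2})$ error of $\Delta^N-\Delta$ on smooth functions. Instead I take $\underline u^N$ to be the semi-discretization of a linear hyperbolic equation describing the traveling-wave-type invasion of $\mathrm{supp}(v_0)$ driven by the singular competition. Substituting the explicit ODE representation of $v^N$, the sub-solution inequality reads
\[
\partial_t\underline u^N \le \Delta^N\underline u^N - K\underline u^N\prod_{i=1}^{m-1}\underline u^N(\cdot+z_i)\,v^N(0,x)\exp\Bigl(-K\int_0^t \underline u^N(s,x)\,ds\Bigr),
\]
and the hypothesis $m>3$ enters essentially here: the $m-1\ge 3$ factors of $\underline u^N$ in the product ensure that, once $\underline u^N$ rises above the threshold $K^{-1/(m-1)}$, the exponential dominates the polynomial prefactor and the reaction becomes integrable in time, so $\underline u^N$ survives the transient reaction phase and converges uniformly to $u$. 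The regularity $u_0\in C^4$ in (B1) is used precisely to approximate the continuous sub-solution by its discrete version with the $o(1/K^2)$ rate needed to match the initial condition.

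Once $u^N\to u$ uniformly, the convergence $v^N\to 0$ follows by spatial splitting. For $x/N\notin\mathrm{supp}(v_0)$, assumption (B1) gives $Kv^N(0,x)=o(1)$ uniformly, and the monotonicity $v^N(t,x)\le v^N(0,x)$ forces $v^N\to 0$ uniformly on this set. For $x/N\in\mathrm{supp}(v_0)$, the lower bound on $u^N$ from the previous step yields $u^N(s,x)\ge c>0$ for $s\in[\tau,T]$ after a short transient $\tau\to 0$, so the explicit ODE gives $v^N(t,x)\le v^N(0,x)\exp(-Kc(t-\tau))\to 0$ uniformly since $K\to\infty$. I expect the sub-solution construction in the third paragraph to be by far the hardest step, as it is the place where $m>3$ and the $C^4$ regularity of $u_0$ are jointly essential.
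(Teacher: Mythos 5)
Your overall architecture coincides with the paper's: reduce to the single nonlocal equation via the explicit formula $v^N(t,x)=v^N_0(x)e^{-K\int_0^t u^N}$, take the discrete heat solution as a super-solution, sandwich $u^N$ from below by a sub-solution converging to $u$, and then kill $v^N$ through the exponential. The super-solution step and the final step for $v^N$ are fine (the paper does not even need your spatial splitting: it bounds $v^N(t,x)\le C v^N_0(x)\exp(-K\int_0^{t\wedge t_*}\underline u_\delta(\tau,x/N)d\tau)$ and uses positivity of $\underline u_\delta$ for $t>0$ on the whole torus). The problem is that the one step you yourself identify as the heart of the matter --- the sub-solution --- is never constructed. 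You invoke ``the semi-discretization of a linear hyperbolic equation describing the traveling-wave-type invasion of $\mathrm{supp}(v_0)$'' without writing the equation, and you justify the sub-solution inequality by a heuristic (``once $\underline u^N$ rises above the threshold $K^{-1/(m-1)}$, the exponential dominates the polynomial prefactor and the reaction becomes integrable in time'') that is not an argument and, as far as I can see, does not even identify the correct mechanism. In the paper the comparison function is not an invasion/hyperbolic profile at all but the solution of the linear \emph{damped heat} equation $\partial_t\underline u_\delta=\Delta\underline u_\delta-\delta\underline u_\delta$ with $\delta>e^{-1}$, semi-discretized; the hypothesis $m>3$ enters through the short-time sign condition $\bigl[(m-1)\underline u_\delta^{m-3}\partial_t\underline u_\delta-1\bigr]\underline u_\delta\le 0$ on $\mathrm{supp}\,v_0$ (Lemma \ref{small time}), i.e.\ through comparing $\underline u_\delta(t)^{m-1}$ with $\int_0^t\underline u_\delta$ near $t=0$ where $u_0\equiv 0$, not through any ``exponential beats polynomial'' threshold.

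Concretely, what is missing and cannot be skipped: (i) verifying the sub-solution inequality at a first touching point, which in the paper requires the decomposition $I^N_0=I^N_1+I^N_2+I^N_3$, the replacement of the spatially dispersed product $u^N(t,x)\cdots u^N(t,x+z_{m-1})$ by $u^N(t,x)^m$ at cost $O(K^2/N)$ via Lemma \ref{grad u}, the elementary bound $ze^{-z}\le e^{-1}$ combined with the choice $\delta>e^{-1}$ and $v^N_0\le C_2<1$, and the $o_N(1/K^2)$ approximation of $\underline u_\delta$ by $\underline u^N_\delta$ (this is where $u_0\in C^4$ and (B1) are used, as you correctly anticipate); and (ii) extending the lower bound past the short time $t_*$: the comparison with $\underline u^N_{\delta_1}$ only holds on $[0,t_*]$, and the paper must then show the reaction term satisfies $0\ge J^N\ge -CK^{-1/2}u^N$ for $t\ge t_K$ (with $t_K\searrow 0$) and patch a second damped-heat sub-solution with damping $\delta_2=C/\sqrt K$ on $(t_K,T]$, before showing $\rho^N_+-\rho^N_-\le 2\delta_1 t_K+T\delta_2\to 0$. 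None of this two-stage construction, nor any substitute for it, appears in your proposal, so the claimed uniform convergence of $u^N$ to $u$ (and hence the positivity of $u^N$ you use to send $v^N\to 0$ on $\mathrm{supp}\,v_0$) is unsupported.
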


Since the second equation of  (\ref{case1}) is liner for $v^N$, it suffices to study the limiting behavior of the single equation
\begin{equation}
\label{case1'}
\partial_t u^N(t,x) = \Delta^N u^N(t,x) - K(N) v^N_0 (x) u^N(t,x)\cdots u^N (t,x+z_{m-1}) e^{-K(N) \int_0^t u^N(\tau,x) d\tau} 
.\end{equation}
To prove Theorem \ref{case1 thm}, we construct the functions $\rho^N_\pm (t,x)$ such that 
\[
\rho^N_- (t,x) \leq u^N(t,x) \leq \rho^N_+(t,x) 
\]
hold for every $t \in [0,T]$, $x \in \mathbb{T}^d_N$ and both $\rho^N_+ $ and $\rho^N_-$ converges to the solution to the heat equation (\ref{heat eq}). 

First we construct a super-solution of \eqref{case1'} which bounds the solution $u^N$ from above. Let $\rho^N_+= \{ \rho^N_+(t,x) \}_{t \in [0,T], x \in \mathbb{T}^d_N}$ be the solution of the semi-discrete heat equation
\begin{equation}
\label{dheat eq}
\begin{cases}
\partial_t \rho^N_+(t,x) = \Delta^N \rho^N_+(t,x) \\
\rho^N_+(0,x)= u^N( 0, x)
\end{cases}
\end{equation}
Since $u^N (t,x)$ and $v^N (t,x)$ are supposed to be positive for all $t \in [0,T]$ and $x \in \mathbb{T}^d_N$, the reaction term of (\ref{case1'}) is always non-positive. Therefore $\rho^N$ is a super-solution of \eqref{case1'} and the comparison principle shown in Lemma \ref{comparison} assures that $u^N(t,x)$ is bounded above by $\rho^N_+ (t,x)$ for every $t \in [0,T]$ and $x \in \mathbb{T}^d_N$. Thus our remainder task is to construct the sub-solution which asymptotically satisfies the heat equation (\ref{heat eq}).

As we see below, one can find such a sub-solution as a same manner with \cite{IMMN17}. However, we have to rearrange the building procedure to fit our discrete setting. 
To construct the sub-solution $\rho^N_-$, we consider the following problem for each fixed constant $\delta>0$. Let $\underline{u}_\delta = \underline{u}_\delta (t, \theta)$ be a solution of 
\begin{equation}
\label{subsol eq}
\begin{cases}
\partial_t \underline{u}_\delta = \Delta \underline{u}_\delta - \delta \underline{u}_\delta \\
\underline{u}_\delta (0, \theta ) = u _0(\theta ) 
\end{cases}
\end{equation}
and let $\{  \underline{u}^N_\delta (t, x)  \}_{N \in \mathbb{N}}$ be a solution of 
\begin{equation}
\label{dsubsol eq}
\begin{cases}
\partial_t \underline{u}^N_\delta = \Delta^N \underline{u}^N_\delta - \delta \underline{u}^N_\delta \\
\underline{u}^N_\delta (0, x) = u^N (0,  x ) .
\end{cases}
\end{equation}
In fact, it becomes necessary to use that $\underline{u}_\delta$ can be approximated by $\underline{u}^N_\delta$ with a rate strictly faster than $K^2$ to construct a desired sub-solution. Such a result can be easily obtained by the convergence result of semi-discretized heat equation to the classical one as we see in the sequel. First we can find the convergence rate for heat equation as stated in \cite{JL06}.

\begin{proposition}[\cite{JL06}]
\label{heat aprx}
Assume that $\rho_0$ is a real-valued function on $\mathbb{T}^d$ with a bounded fourth derivative. Let $\{ \rho^N (t, x) \}_{N \in \mathbb{N}} $ be a solution to the semi-discretized heat equation
\[
\begin{cases}
\begin{aligned}
& \partial_t \rho^N (t, x) = \Delta^N \rho^N (t, x) \\
& \rho^N (0, x) = \rho_0 (x / N)
\end{aligned}
\end{cases}
\]
and let $\rho(t,\theta)$ be a solution of the heat equation
\[
\begin{cases}
\begin{aligned}
& \partial_t \rho (t, \theta) = \Delta \rho (t, \theta) \\
& \rho (0, \theta) = \rho_0 (\theta).
\end{aligned}
\end{cases}
\]
Then there exists a positive constant $C$ such that for every $t \ge 0$ and $ x \in \mathbb{T}^d_N$ we have 
\[
| \rho^N (t, x) - \rho (t, x / N )  | \le C t / N^{ 2 }.
\]
\end{proposition}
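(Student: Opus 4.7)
The plan is to estimate the error $e^N(t,x) \coloneqq \rho^N(t,x) - \rho(t, x/N)$ by deriving the equation it satisfies and exploiting the consistency of the discrete Laplacian with the continuous Laplacian on smooth functions.

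First, I would note that $\rho(t, x/N)$ satisfies
\[
\partial_t \rho(t, x/N) = \Delta \rho(t, x/N) = \Delta^N \rho(t, \cdot/N)(x) - R^N(t,x),
\]
where the consistency error is $R^N(t,x) \coloneqq \Delta^N \rho(t, \cdot/N)(x) - \Delta \rho(t, x/N)$. Subtracting from the equation for $\rho^N$, the error $e^N$ satisfies
\[
\partial_t e^N(t,x) = \Delta^N e^N(t,x) + R^N(t,x), \qquad e^N(0,x) = 0.
\]

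Second, I would Taylor-expand $\rho(t, (x \pm e_j)/N)$ to fourth order in $1/N$ around $\rho(t, x/N)$. The odd-order contributions cancel under the centered difference, the second-order term reproduces $\partial_{\theta_j}^2 \rho(t, x/N)$, and the fourth-order term gives a remainder bounded by $C \|\partial_{\theta_j}^4 \rho(t,\cdot)\|_{L^\infty}/N^2$. Summing over $j = 1,\dots, d$ yields
\[
|R^N(t,x)| \le \frac{C_d}{N^2}\, \sup_{t \in [0,T]} \| \rho(t,\cdot) \|_{C^4(\mathbb{T}^d)}.
\]
Since $\rho$ is the heat flow on the torus of an initial datum $\rho_0$ with bounded fourth derivatives, each spatial derivative $\partial^\alpha \rho(t, \cdot) = e^{t\Delta} \partial^\alpha \rho_0$ inherits the $L^\infty$ bound of $\partial^\alpha \rho_0$ (the periodic heat semigroup is a contraction on $L^\infty$ and commutes with spatial derivatives). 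Hence $\sup_{t \ge 0} \|\rho(t,\cdot)\|_{C^4}$ is finite, and $\|R^N(t,\cdot)\|_{L^\infty} \le C/N^2$ uniformly in $t$.

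Third, I would conclude via Duhamel's formula
\[
e^N(t,x) = \int_0^t \sum_{y \in \mathbb{T}^d_N} p^N(t-s, x, y)\, R^N(s, y)\, ds,
\]
and use the fact that the discrete heat kernel $p^N(t-s,x,\cdot)$ is a probability measure on $\mathbb{T}^d_N$, so that
\[
|e^N(t,x)| \le \int_0^t \|R^N(s,\cdot)\|_{L^\infty}\, ds \le \frac{Ct}{N^2}.
\]
Equivalently, one can apply the discrete parabolic maximum principle to $e^N(t,x) \mp Ct/N^2$ to reach the same bound. The main obstacle, though minor, is the bookkeeping in the Taylor expansion to obtain the correct $1/N^2$ factor with a constant depending only on $\|\rho_0\|_{C^4}$; the rest is standard once one has established uniform $C^4$ control of $\rho$.
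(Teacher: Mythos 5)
Your argument is correct. Note that the paper itself gives no proof of this statement: it is stated as a \emph{Proposition}, which by the paper's own convention (Remark 1.1) means a result quoted from the literature, here \cite{JL06}. So there is no in-paper proof to compare against; what you have written is the standard, self-contained consistency-plus-stability argument, and all the steps check out. The error $e^N$ does satisfy the forced semi-discrete heat equation with vanishing initial datum; the fourth-order Taylor expansion of $\rho(t,(x\pm e_j)/N)$ with Lagrange remainder gives the consistency bound $|R^N(t,x)|\le C_d N^{-2}\sup_j\|\partial_j^4\rho(t,\cdot)\|_{L^\infty}$; the uniform-in-time $C^4$ control follows because $\partial^\alpha\rho(t,\cdot)=e^{t\Delta}\partial^\alpha\rho_0$ and the periodic heat semigroup is an $L^\infty$-contraction; and since $p^N(t,x,\cdot)$ is a probability measure on $\mathbb{T}^d_N$, Duhamel yields $|e^N(t,x)|\le Ct/N^2$ (your alternative via the discrete comparison principle, which is exactly Lemma \ref{comparison} of the paper applied to $e^N(t,x)\mp Ct/N^2$, works equally well). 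The only point worth making explicit is that ``bounded fourth derivative'' should be read as $\rho_0\in C^4(\mathbb{T}^d)$ so that the Taylor remainders involve genuine pointwise fourth derivatives and the commutation $\partial^\alpha e^{t\Delta}\rho_0=e^{t\Delta}\partial^\alpha\rho_0$ is immediate; with that reading your constant depends only on $d$ and $\|\rho_0\|_{C^4(\mathbb{T}^d)}$, exactly as needed for the application in Section \ref{sec:case1}.
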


In other words, a solution to the (continuous) heat equation is approximated by that of the semi-discretized one with precision order $1/N^2$ if they have the common initial function. We see that $\underline{u}_\delta = e^{- \delta t } \rho$ and $\underline{u}^N_\delta = e^{ -\delta t } \rho^N$ where $\rho$ is a solution of the classical heat equation \eqref{subsol eq} with initial function $u_0$ and $\rho^N$ is a solution of semi-discretized heat equation \eqref{dsubsol eq} with initial function $\rho^N (0, \cdot) = u^N (0, \cdot)$. Moreover, recall here that their initial function is close up to the order $o_N (1/ K^{2} )$ by the assumption (B1). Therefore, we apply Proposition \ref{heat aprx} to obtain
\[
\lim_{ N \to \infty } K^2 | \underline{u}^N_\delta (t, x) - \underline{u}_\delta (t, x/N)  | = 0 
\] 
uniformly in $t \in [0,T] $. Namely, we can approximate the solution of (\ref{subsol eq}) by the solution of (\ref{dsubsol eq}) with precision order $o_N (1/K^2 )$. With these approximation results at hand, now we show the following two lemmas (Lemma \ref{small time} and \ref{subsol bound}) as preliminary to prove Theorem \ref{case1 thm}.

\begin{lemma}
\label{small time}
Assume (B1) and that let $m$ be an integer satisfying $m > 3$. Let $\underline{u}_\delta$ be a solution of (\ref{subsol eq}). Then there exists a positive constant $t_* = t_* (\delta)$ such that 
\begin{equation}
\label{small time est}
\left[ (m-1) \underline{u}_\delta (t, \theta )^{m-3}  \partial_t \underline{u}_\delta (t, \theta) -1 \right] \underline{u}_\delta (t,\theta ) \leq 0 
\end{equation}
for every $t \in [0,t_*]$ and $\theta \in \text{supp} v_0$. 
\end{lemma}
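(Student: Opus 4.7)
The plan is to exploit the condition $u_0 v_0 \equiv 0$ from assumption (B1). This forces $u_0(\theta)=0$ for every $\theta \in \operatorname{supp} v_0$, and therefore makes $\underline{u}_\delta(t,\theta)$ vanish linearly in $t$ on that set. Since $m>3$, the factor $\underline{u}_\delta^{m-3}$ will then be small of order $t^{m-3}$, which is precisely what we need to dominate a bounded quantity like $\partial_t \underline{u}_\delta$.

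First, I would establish the regularity of $\underline{u}_\delta$. Writing $\underline{u}_\delta(t,\theta)=e^{-\delta t}\rho(t,\theta)$, where $\rho$ is the heat semigroup applied to $u_0$, the hypothesis $u_0 \in C^4(\mathbb{T}^d)$ in (B1) guarantees that $\partial_t \underline{u}_\delta = \Delta \underline{u}_\delta - \delta \underline{u}_\delta$ is continuous and uniformly bounded on $[0,T]\times \mathbb{T}^d$; indeed, by the $L^\infty$-contractivity of the heat semigroup one has
\[
\|\partial_t \underline{u}_\delta\|_{L^\infty([0,T]\times \mathbb{T}^d)} \le \|\Delta u_0\|_{L^\infty(\mathbb{T}^d)}+\delta\|u_0\|_{L^\infty(\mathbb{T}^d)} =: M(\delta).
\]
Applying the fundamental theorem of calculus and using $u_0(\theta)=0$ for $\theta \in \operatorname{supp} v_0$, I obtain
\[
0 \le \underline{u}_\delta(t,\theta) = \int_0^t \partial_s \underline{u}_\delta(s,\theta)\, ds \le M(\delta)\, t
\]
for every $t \in [0,T]$ and $\theta \in \operatorname{supp} v_0$ (here nonnegativity follows from the maximum principle applied to (\ref{subsol eq})).

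The inequality (\ref{small time est}) is trivial at points where $\underline{u}_\delta(t,\theta)=0$, so it suffices to show $(m-1)\underline{u}_\delta(t,\theta)^{m-3}\partial_t \underline{u}_\delta(t,\theta) \le 1$ whenever $\underline{u}_\delta(t,\theta)>0$. If $\partial_t \underline{u}_\delta(t,\theta)\le 0$ the left-hand side is non-positive; otherwise, combining the pointwise bound $\underline{u}_\delta(t,\theta)\le M(\delta)t$ with $m-3>0$ gives
\[
(m-1)\underline{u}_\delta(t,\theta)^{m-3}\partial_t \underline{u}_\delta(t,\theta) \le (m-1)\bigl(M(\delta)t\bigr)^{m-3} M(\delta) = (m-1)M(\delta)^{m-2}\, t^{m-3}.
\]
Choosing $t_*=t_*(\delta)>0$ so small that $(m-1) M(\delta)^{m-2} t_*^{m-3} \le 1$ completes the proof.

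There is no substantial obstacle here; the argument is essentially an expansion around $t=0$ at the interface support. The only point requiring some care is the uniform bound on $\partial_t \underline{u}_\delta$, which is exactly why (B1) demands $u_0 \in C^4$ (this regularity is also used elsewhere in Section \ref{sec:case1} for sharper approximation rates, so it costs nothing here). The condition $m>3$ enters in a crucial way, since $m-3>0$ is precisely what makes $t^{m-3}\to 0$ as $t\to 0$; the borderline case $m=3$ and the sub-critical case $m<3$ would both fail this scheme, in agreement with the restriction in \textbf{Case 1}.
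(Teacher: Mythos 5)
Your proof is correct and follows essentially the same route as the paper: both arguments rest on the fact that $u_0\equiv 0$ on $\operatorname{supp} v_0$ (from $u_0 v_0\equiv 0$), so that for $m>3$ the bracket in \eqref{small time est} equals $-1$ at $t=0$ and remains non-positive on a short time interval, while $\underline{u}_\delta\ge 0$ handles the outer factor. The paper disposes of the persistence step by a one-line continuity argument, whereas you make it quantitative via the bound $\underline{u}_\delta(t,\theta)\le M(\delta)t$ and an explicit choice of $t_*$, which also makes the uniformity in $\theta\in\operatorname{supp} v_0$ explicit; this is a harmless refinement, not a different method.
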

\begin{proof}
Since $\theta \in \text{supp} v_0$, the quantity inside the brackets in \eqref{small time est} is $-1$ at initial time $t = 0$. Therefore, by the continuity in time of $\underline{u}_\delta $, the assertion holds in a short time interval and thus we complete the proof.  
\end{proof}

The above assertion is a result not in a discrete setting but completely in the PDE context and of course the time horizon $t_*$ is independent of the scaling parameter $N$. 
%The proof of Lemma \ref{small time} is already given in \cite{IMMN17} so that we omit the proof here. 

%\begin{remark}
%In \cite{IMMN17}, they proved that the above non-positivity estimate holds in $supp(v_0)$ if $u_0$ and $v_0$ are spatially segregated, for which a part of the condition (B1) is not needed. However, we need the above assertion holds in the whole domain $\mathbb{T}^d$ and thus we assumed a stronger condition (B1) to ensure that. 
%\end{remark}

\begin{lemma}
\label{subsol bound}
We assume the same conditions as Theorem \ref{case1 thm}. Let $\delta> e^{-1}$ and $t_* > 0$ be the constant given in Lemma \ref{small time}. Then for every sufficiently large $N \in \mathbb{N}$ we have 
\begin{equation*}
u^N(t,x) \geq \underline{u}^N_\delta (t,x)
\end{equation*}
for every $t \in [0,t_*]$ and $x \in \mathbb{T}^d_N$. 
\end{lemma}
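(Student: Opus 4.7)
The plan is to verify that $\underline{u}^N_\delta$ is a sub-solution of the integro-differential equation \eqref{case1'} (obtained by substituting the explicit formula $v^N(t,x) = v^N_0(x)\exp(-K\int_0^t u^N(\tau,x)d\tau)$ into the first equation of \eqref{case1}) and then to deduce the pointwise inequality $u^N \ge \underline{u}^N_\delta$ by a comparison argument adapted to the time-nonlocal reaction. Substituting $\underline{u}^N_\delta$ for $u^N$ on the right-hand side of \eqref{case1'} and using $\partial_t \underline{u}^N_\delta = \Delta^N \underline{u}^N_\delta - \delta \underline{u}^N_\delta$, the sub-solution condition reduces, after dividing through by the strictly positive $\underline{u}^N_\delta(t,x)$ (positivity follows from the representation $\underline{u}^N_\delta = e^{-\delta t}\rho^N$ with $\rho^N$ solving the semi-discrete heat equation, together with (A1)), to the pointwise bound
\begin{equation}\label{subineq}
K v^N_0(x) \prod_{i=1}^{m-1} \underline{u}^N_\delta(t, x+z_i) \, \exp\!\Big(-K \int_0^t \underline{u}^N_\delta(\tau, x)\, d\tau\Big) \le \delta
\end{equation}
for every $(t, x) \in [0, t_*] \times \mathbb{T}^d_N$.

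To establish \eqref{subineq} I would split $\mathbb{T}^d_N$ according to whether $x/N$ lies in $\text{supp}\, v_0$. On the complement, (B1) gives $K v^N_0(x) \to 0$ uniformly, so combined with the trivial bounds $\underline{u}^N_\delta \le C_2 < 1$ and $\exp(-K\int \underline{u}^N_\delta) \le 1$ the inequality holds for all $N$ large. On the region $\{x/N \in \text{supp}\, v_0\}$ I would pass to the continuum: Proposition \ref{heat aprx} applied to $\rho^N = e^{\delta t}\underline{u}^N_\delta$, together with the hypothesis $K^2|u^N_0(x) - u_0(x/N)| \to 0$ in (B1), yields $\sup_{t,x} K^2|\underline{u}^N_\delta(t,x) - \underline{u}_\delta(t, x/N)| \to 0$; a uniform gradient bound $|\nabla^N \underline{u}^N_\delta| \le C$ obtained from Duhamel's formula for \eqref{dsubsol eq} exactly as in Lemma \ref{grad u} then allows replacement of $\prod_i \underline{u}^N_\delta(t, x+z_i)$ by $\underline{u}_\delta(t, x/N)^{m-1}$ up to a vanishing $O(1/N)$ error.

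The crux of the proof is then the continuum bound
\[
A(t, \theta) := K v_0(\theta)\, \underline{u}_\delta(t, \theta)^{m-1}\, e^{-K \int_0^t \underline{u}_\delta(\tau, \theta)\, d\tau} \le \delta \qquad \theta \in \text{supp}\, v_0,\ t \in (0, t_*],
\]
and this is where Lemma \ref{small time} enters decisively. Writing $I(t) = \int_0^t \underline{u}_\delta(\tau, \theta)\, d\tau$, direct differentiation combined with the identity $(m-1)\underline{u}_\delta^{m-2}\partial_t \underline{u}_\delta = \underline{u}_\delta \cdot (m-1)\underline{u}_\delta^{m-3}\partial_t \underline{u}_\delta \le \underline{u}_\delta$ (the last step being Lemma \ref{small time}) yields the linear differential inequality
\[
A'(t) + K \underline{u}_\delta(t, \theta)\, A(t) \le K v_0(\theta)\, \underline{u}_\delta(t, \theta)\, e^{-K I(t)}.
\]
Multiplying by the integrating factor $e^{K I(t)}$ gives $(A e^{KI})' \le K v_0 \underline{u}_\delta$; integrating from $0$ (where $A(0) = 0$ since $u_0 v_0 \equiv 0$ forces $\underline{u}_\delta(0, \theta) = u_0(\theta) = 0$ on $\text{supp}\, v_0$) yields $A(t) e^{K I(t)} \le K v_0(\theta) I(t)$, hence
\[
A(t) \le v_0(\theta) \cdot K I(t)\, e^{-K I(t)} \le \frac{v_0(\theta)}{e} \le \frac{C_2}{e} < \frac{1}{e} < \delta
\]
by the elementary inequality $\sup_{x \ge 0} x e^{-x} = 1/e$ combined with $v_0 \le C_2 < 1$ from (A1). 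The threshold $\delta > e^{-1}$ is exactly what makes this bound work uniformly in the exponent $m > 3$.

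The main obstacle is then propagating \eqref{subineq} to the pointwise inequality $u^N \ge \underline{u}^N_\delta$: Lemma \ref{comparison} does not apply directly because the reaction in \eqref{case1'} has nonlocal dependence on $u^N$ both at neighboring sites $x+z_i$ and through the history integral, and these two dependencies carry opposite monotonicities (higher $u^N$ at neighbors increases the reaction, while higher $u^N$ along the history decreases it through the exponential). The resolution is to work with the coupled pair $(\underline{u}^N_\delta, \bar v^N)$, where $\bar{v}^N(t,x) := v^N_0(x)\exp(-K\int_0^t \underline{u}^N_\delta(\tau, x)\, d\tau)$ serves as a super-solution of the $v^N$-equation under the inductive hypothesis $u^N \ge \underline{u}^N_\delta$, and to use the uniform bound $u^N \le 1$ from Lemma \ref{dsol est} to dominate $\prod_i u^N(x+z_i) \le 1$. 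With the perturbed difference $\tilde w := (u^N - \underline{u}^N_\delta) e^{Mt} + 2\varepsilon - \varepsilon e^{-t}$ as in the proof of Lemma \ref{comparison}, suppose $\tilde w$ first vanishes at $(t_0, x_0)$; the sub-solution inequality \eqref{subineq}, together with the monotonicity $\int_0^{t_0} u^N(\tau, x_0)\, d\tau \ge \int_0^{t_0} \underline{u}^N_\delta(\tau, x_0)\, d\tau$ on $[0, t_0)$ and $\prod_i u^N(t_0, x_0+z_i) \le 1$, forces $\partial_t \tilde w(t_0, x_0) > 0$, contradicting $\Delta^N \tilde w(t_0, x_0) \ge 0$ and $\partial_t \tilde w(t_0, x_0) \le 0$; letting $\varepsilon \to 0$ concludes the proof.
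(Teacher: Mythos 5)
The first half of your argument is sound and essentially parallels the paper's: the reduction of the sub-solution property to the pointwise bound $K v^N_0(x)\prod_{i}\underline{u}^N_\delta(t,x+z_i)\,e^{-K\int_0^t\underline{u}^N_\delta(\tau,x)d\tau}\le\delta$, the splitting according to whether $x/N\in\mathrm{supp}\,v_0$, and the continuum estimate $A\le v_0 e^{-1}<\delta$ via the integrating factor are correct; your Gr\"onwall-style use of Lemma \ref{small time} is a repackaging of the paper's computation of the exponent $z_K-K\int_0^{t_0}\underline{u}_\delta\,d\tau$ combined with $s e^{-s}\le e^{-1}$ (the paper's $I^N_2$). (A minor slip: under (A1) one only has $|\nabla^N\underline{u}^N_\delta|=O(K)$ near $t=0$, not $O(1)$, but the resulting replacement error $O(K^2/N)$ still vanishes.)

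The genuine gap is in your propagation step. First, at the contact point you dominate $\prod_i u^N(t_0,x_0+z_i)\le 1$; this throws away exactly the factor $\underline{u}_\delta(t_0,\cdot)^{m-1}$ that makes the sub-solution inequality true, and the bound you would then need is $K v^N_0(x_0)\,e^{-K\int_0^{t_0}\underline{u}^N_\delta(\tau,x_0)d\tau}\le\delta$, which is false for small $t_0$ at interior points of $\mathrm{supp}\,v_0$: there $\int_0^{t_0}\underline{u}^N_\delta$ is negligible and the left-hand side is of order $K v_0(x_0)\to\infty$. Nor can you replace the neighbor values of $u^N$ by those of $\underline{u}^N_\delta$, since positivity of $\tilde w$ up to time $t_0$ only gives a lower bound on $u^N$ at the neighboring sites, while an upper bound is what is needed (the reaction enters with a minus sign). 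The paper's resolution is to keep the neighbor product of $u^N$ and homogenize it to $u^N(t_0,x_0)^{m-1}$ via the gradient bound of Lemma \ref{grad u}, at a cost $O(K^2/N)$, after which the contact relation $u^N(t_0,x_0)\le\underline{u}^N_\delta(t_0,x_0)$ can be exploited (terms $I^N_1$ and $I^N_2$ there). Second, the history monotonicity $\int_0^{t_0}u^N\ge\int_0^{t_0}\underline{u}^N_\delta$ you invoke is not available for the perturbed difference: before its first zero one only knows $u^N>\underline{u}^N_\delta-2\varepsilon$, so the exponential comparison produces an extra factor of the form $e^{CK\varepsilon t_0}$; this error (the paper's $I^N_3$) must be absorbed by choosing $\varepsilon=\varepsilon_N$ small against a margin of size $\underline{u}^N_\delta\,(\delta-e^{-1})\ge e^{-CK}(\delta-e^{-1})$, together with $e^{CK}K^2/N\to0$ from (A3)$_\delta$. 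As written, your final contradiction does not close; both issues must be repaired along the lines just indicated.
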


\begin{proof}
Let $\varepsilon \in (0, \varepsilon_N)$ be given and $\varepsilon_N$ will be characterized later. Let $W^N \coloneqq u^N - \underline{u}^N_\delta + \varepsilon$. Then $W^N$ satisfies 
\[
\begin{cases}
\partial_t W^N (t,x) = \Delta^N W^N (t,x)  + I^N (t,x) \\
W^N(0,x) = \varepsilon 
\end{cases}
\]
with 
\[
I^N (t, x ) \coloneqq - K v^N_0 (x) u^N(t,x) \cdots u^N(t,x+z_{m-1}) e^{-K \int_0^t u^N(\tau , x) d\tau } + \delta \underline{u}^N_\delta (t, x)  
\]
for every $t \in [0, T]$ and $x \in \mathbb{T}^d_N$. We show $W^N > 0$ in $Q_{t_*}$ by contradiction. To see that, suppose there exists a $(t_0, x_0) \in Q_{t_*} $ such that 
\[
W^N (t_0, x_0 ) = 0, \quad W^N (t,x ) > 0 \text{ in } [0, t_0 ) \times \mathbb{T}^d_N 
.\]
Let $I_0^N = I^N (t_0, x_0) $ and our task is to prove $I^N_0 > 0$. Indeed, since $W^N$ attains its minimal value $0$ at the point $(t_0, x_0)$ in $Q_{t_0}$, we have
\[
\partial_t W^N (t_0, x_0) \leq 0 , \quad \Delta^N W^N (t_0, x_0) \geq 0
.\]
However, once we proved $I^N_0 > 0$, we have 
\[
0 \geq \partial_t W^N (t_0, x_0) = \Delta^N W^N (t_0, x_0) + I^N_0 > 0
\]
which becomes contradiction. First we consider the case $x_0 / N \not\in \text{supp} v_0$. Since $u^N$ is non-negative and bounded from above by $1$, we have 
\[
\begin{aligned}
I^N_0 
& = - K v^N_0 (x_0) u^N( t_0, x_0 ) \cdots u^N(t_0, x_0 + z_{m-1}) e^{-K \int_0^{t_0} u^N (\tau , x_0 ) d\tau } + \delta \underline{u}^N_\delta (t_0 , x_0 )  \\
& \ge - K v^N_0 ( x_0 ) u^N (t_0, x_0 ) + \delta \underline{u}^N_\delta (t_0, x_0) .
\end{aligned}
\]
However, by the assumption (B1), we have the bound $K v^N_0 (x_0) \le \delta $ for every sufficiently large $K$ so that the last display is bounded form below by $- \delta (u^N (t_0, x_0 ) - \underline{u}^N_\delta (t_0, x_0) ) = \delta \varepsilon $ recalling $W^N (t_0, x_0) = u^N (t_0, x_0) - \underline{u}^N_\delta  (t_0, x_0 )  + \varepsilon =0$. Therefore, we have $I^N_0 > 0$ in the case of $x_0 / N \notin \text{supp} v_0$ so we assume $x_0 / N \in \text{supp} v_0$ in the sequel. To see $I^N_0 > 0$ in this case, we decompose $I^N_0 = I^N_1 + I^N_2 + I^N_3$ where
\[
\begin{aligned}
& I^N_1 = -K v^N_0 (x) \left( u^N (t_0, x_0) \cdots u^N (t_0, x_0 + z_{m-1}) - \underline{u}^N_\delta (t_0, x_0)^m  \right)  e^{-K \int_0^{t_0} u^N (\tau , x_0) d\tau}  \\
& I^N_2 = \underline{u}^N_\delta (t_0, x_0)  \left( \delta - K v^N_0 (x_0) \underline{u}^N_\delta (t_0, x_0)^{m-1} e^{-K \int_0^{t_0} \underline{u}_\delta^N (\tau , x_0) d\tau}  \right)  \\
& I^N_3 = K v^N_0 (x_0) \underline{u}^N_\delta (t_0, x_0)^m \left( e^{-K \int_0^{t_0} \underline{u}^N_\delta (\tau, x_0) d\tau } - e^{ -K \int_0^{t_0} u^N (\tau, x_0 ) d\tau } \right) 
\end{aligned}
\]
and estimate $I^N_1$, $I^N_2$ and $I^N_3$ separately. First for $I^N_1$, we replace the local product of $u^N$ into the spatially homogeneous one, that is, we prove
\[
\lim_{N \to \infty} \sup_{x \in \mathbb{T}^d_N } K | u^N (t, x) \cdots u^N (t, x+z_{m-1}) - u^N (t,x)^m | = 0
\]
for every $t \in [0,T]$. To see this, let $\{ y^j_i \}_{j = 0,...,| z_{i+1} - z_i |}$ be one of shortest paths from $z_i$ to $z_{i+1}$ for every $i= 0,...,m-2$: $y_i^0 = z_i$, $y_i^{| z_{i+1}- z_i | }= z_{i+1}$, $ | y_i^{j+1} - y_i^j | =1$ for every $i=0,...,m-2, j=0,..., | z_{i+1} - z_i | -1$ and we let $z_0 = 0$. Then, since $u^N (t,x)$ takes values in $[0,1]$ according to Lemma \ref{dsol est2}, the absolute value appearing in the left hand side is bounded above by 
\[
K \sum_{i = 0}^{m-2} \sum_{j=0}^{ | z_{i+1} - z_i | -1 } | u^N (t, y_i ^{j+1}) -u^N (t, y_i^j)  | = O ( K^2 / N )
.\]
Here we have used the pointwise estimate of derivatives $|\nabla^N u^N (t,x)|$ stated in Lemma \ref{grad u}. In particular, we can replace the spatially inhomogeneous local product into the homogeneous one and thus we have
\[
I^N_1 = K v^N_0 (x_0) \left( \underline{u}^N_\delta (t_0, x_0 )^m - u^N (t_0, x_0 )^m \right) e^{ -K \int_0^{t_0} u^N (\tau, x_0) d\tau } +O (K^2/ N)
.\]
However, by an elementary estimate $(u+\varepsilon )^m - u^m \geq m \varepsilon u^{m-1}$ for every $u, \varepsilon \geq 0$ and $m > 1$, the first term in the above comes out to be non-negative. Next we estimate $I^N_2$. Since $\underline{u}_\delta$ can be approximated by $\underline{u}^N_\delta $with precision order $o_N (1/K^2 )$ with help of Proposition \ref{heat aprx} and the assumption (B1), we can replace $\underline{u}^N_\delta$ inside parentheses in $I^N_2$ by $\underline{u}_\delta$ with a cost of order $o_N (K^2 / K^2 ) = o_N (1) $ which is small as $N$ tends to infinity. Here for $K^2$ in the numerator, one $K$ is the coefficient of the leading term (the first term in the above) and the second one comes from the exponent in the exponential term. Now we let $z_K \coloneqq K \underline{u}_\delta (t_0, x_0 / N)^{m-1}$. Then we have
\[
\begin{aligned}
I^N_2 
&= \underline{u}^N_\delta (t_0, x_0  ) \big( \delta - v^N_0 (x_0  ) z_K e^{-z_K} e^{z_K - K \int_0^{t_0} \underline{u}_\delta (\tau, x_0 / N ) d\tau }  + o_N (1) \big)  \\
& \geq \underline{u}^N_\delta (t_0, x_0  ) \big( \delta - v^N_0 (x_0  ) e^{-1} e^{z_K - K \int_0^{t_0} \underline{u}_\delta (\tau, x_0 / N ) d\tau }  + o_N (1) \big)  . 
\end{aligned}
\]
Looking the exponential term appearing in the last quantity, one can observe
\begin{equation}
\label{zk}
\begin{aligned}
& z_K - K \int_0^{t_0} \underline{u}_\delta (\tau , x_0 / N ) d\tau  = K \int_0^{t_0} \left[ (m-1) \underline{u}_\delta (\tau, x_0 / N )^{m-3} \partial_t \underline{u}_\delta (\tau, x_0 / N ) - 1 \right] \underline{u}_\delta (\tau, x_0 / N ) d\tau %+ K \underline{u}_\delta (0, x_0 / N )^{m-1}. 
\end{aligned}
\end{equation}
since $\underline{u}_\delta (0, x_0 / N ) = u_0 ( x_0 ) = 0$ when $x_0 / N \in \text{supp} v_0$. Therefore, \eqref{zk} stays non-positive if $t_0 \leq t_*$ recalling $t_*$ is the small time horizon found in Lemma \ref{small time}. Therefore, $I^N_2$ can be bounded from below as
\[
I^N_2 \geq \underline{u}^N_\delta (t_0, x_0  ) \big( \delta - v^N_0 (x_0) e^{-1}  + o_N (1)  \big)
\]
as $N $ tends to infinity. Finally, for $I^N_3$, recalling the temporal assumption $\underline{u}^N_\delta (t, x) - u^N (t,x) \le \varepsilon$ in $[0,t_0] \times \mathbb{T}^d_N$, we have
\[
\begin{aligned}
I^N_3 
& = K v^N_0 (x_0) \underline{u}^N_\delta (t_0, x_0)^m e^{-K \int_0^{t_0} \underline{u}^N_\delta (\tau, x_0) d\tau} \big( 1 - e^{ K \int_0^{t_0} ( \underline{u}^N_\delta (\tau, x_0)- u^N (\tau, x_0) ) d\tau}  \big) \\
& \geq K v^N_0 (x_0) \underline{u}^N_\delta (t_0, x_0)^m e^{-K \int_0^{t_0} \underline{u}^N_\delta (\tau, x_0) d\tau} \left( 1- e^{K \varepsilon t_0} \right). 
\end{aligned}
\]
Combining all estimates obtained above and recalling $t_0 \leq t_*$, we conclude
\[
I^N_0 \ge \underline{u}^N_\delta ( t_0, x_0  )  \big( \delta - e^{-1} + o_N (1) \big) - K (e^{K \varepsilon t_*} - 1) + O (K^2 / N)
\]
as $N$ tends to infinity. We note here that we took $\delta > e^{-1}$ and that $\underline{u}^N_\delta $ is bounded from below by $e^{- C K} $ and we have $\lim_{ N \to \infty} e^{C K } K^2 /N = 0$ for every $\delta $ appearing in the assumption (A3). Therefore, we choose $ \varepsilon_N $ so small that the above quantity stays strictly positive for every fixed (but sufficiently large) $N$ and thus we complete the proof by showing contradiction.
\end{proof}

Now we construct a desired sub-solution and give the proof of Theorem \ref{case1 thm}.

\begin{proof}[Proof of Theorem \ref{case1 thm}]
Recall that $u^N(t,x)$ satisfies the single equation (\ref{case1'}). We first show that the reaction term in (\ref{case1'}), which is denoted by $J^N (t,x)$, converges to $0$ as $N$ tends to infinity. To see that, fix any $t \in ( 0, t_* ]$. Then, according to Lemma \ref{subsol bound}, we have
\[
\begin{aligned}
- J^N (t,x) 
& =  K v^N_0 (x) u^N(t,x)\cdots u^N (t,x+z_{m-1}) e^{-K(N) \int_0^t u^N(\tau,x) d\tau} \\
& \leq  K u^N(t,x) e^{- K \int_0^t \underline{u}^N_\delta (\tau, x)  d\tau }  \\
& =   K u^N(t,x) e^{- K \int_0^t ( \underline{u}^N_\delta (\tau,x) - \underline{u}_\delta (\tau,x/N) ) d\tau } e^{- K \int_0^t \underline{u}_\delta (\tau, x/ N ) d\tau }. 
\end{aligned}
\]
Here we did not replaced $u^N(t,x)$ by $1$ to use Lemma \ref{subsol bound} later again. We have seen that $\underline{u}_\delta $ is approximated by $\underline{u}^N_\delta$ with precision of order $o_N ( 1/ K^2 ) $. In particular, we have $\lim_{ N \to \infty } \sup_{x \in \mathbb{T}^d_N } K | \underline{u}^N_\delta (t, x) - \underline{u}_\delta (t, x /N) | = 0 $ for every $t \in [0,T]$ so that there exists a positive constant $C$ such that the last quantity is bounded above by 
\[
 C K u^N (t, x) e^{ - K \int_0^t \underline{u}_\delta (\tau, x / N ) d\tau } .
\] 
Moreover, by an elementary inequality $s^2 e^{-s} \leq 4e^{-2}$ for every $s \geq 0$, we have
\[
\begin{aligned}
- J^N (t,x) 
& \leq C K  \left( K e \int_0^t \underline{u}_\delta (\tau,x / N ) d\tau \right)^{-2} u^N(t,x) \\
& \leq \frac{C}{ K} \left(  \int_0^t \min_{\theta \in [0, 1)^d} \underline{u}_\delta (\tau, \theta) d\tau   \right)^{-2} u^N(t,x) . 
\end{aligned}
\]
We let $\gamma (t) \coloneqq \int_0^t \min_{\theta \in [0,1)^d} \underline{u}_\delta (\tau, \theta) d\tau$. Since $\underline{u}_\delta  (t, \theta) > 0$ for every $t > 0$ and $\theta \in [0,1)^d$, we have $\gamma( t ) > 0$ so that there exists a positive constant $K_* > 0$ such that $\gamma (t_*) = (K_*)^{-1/4}$. In the sequel we suppose $N$ is sufficiently large so that $K_* \le K (N) $. Then, since we have $\gamma(0) = 0$ and $t \mapsto \gamma (t)$ is a continuous, strictly increasing mapping, there exists $t_K \le  t_* $ such that  $\gamma (t_K) = K^{-1/4}$ and $t_{K(N)} \searrow 0$ as $N$ tends to infinity. Therefore, for every $t \in [t_K , t_*]$, we have
\[
- J^N (t , x) \le \frac{C}{ K  \gamma (t_K)^2 } u^N (t, x) = \frac{C}{ \sqrt{K}  } u^N (t,x) . 
\]
On the other hand, when $t \in ( t_*, T ]$, since the function $ t \mapsto \gamma (t) $ is increasing, by using a similar argument given above, we have 
\[
- J^N (t,x) \leq \frac{ C }{K \gamma(t_*)^2} u^N ( t , x )  \le \frac{ C }{\sqrt{K} } u^N(t,x) .
\]
Thus we proved 
\begin{equation}
\label{reaction}
 0 \geq J^N (t,x) \geq - \frac{ C }{\sqrt{K} } u^N(t,x) 
\end{equation}
for every $t \in [t_K,T]$ and $x \in \mathbb{T}^d_N$, which particularly implies $\sup_{ x \in \mathbb{T}^d_N} J^N (t , x )$ converges to $0$ as $N$ tends to infinity.

Now we construct a sub-solution $\rho^N_-$. Fix $\delta_1 > e^{-1}$. Then, by Lemma \ref{subsol bound}, we have
\[
u^N (t,x) \geq \underline{u}^N_{\delta_1} (t,x)
\]
for every $t \in [0, t_* ]$ and $x \in \mathbb{T}^d_N$. Next we let $\delta_2 = C/{\sqrt{K} }$. Then, according to the first step which is given above, the reaction term in \eqref{case1'} $J^N$ satisfies the bound \eqref{reaction} for every $t \in [t_K, T]$ and $x \in \mathbb{T}^d_N$ and thus $\underline{u}^N_{\delta_2} (t- t_K , x ; \underline{u}^N_{\delta_1}(t_K, \cdot; u^N_0) )$ becomes to be a sub-solution of \eqref{case1'}. Here $ \underline{u}_\delta^N (t, x ; u^N_0) $ denotes a solution of \eqref{dsubsol eq} with initial function $u^N_0 : \mathbb{T}^d_N \to \mathbb{R}$. Therefore, the comparison principle (Lemma \ref{comparison}) implies
\[
u^N (t,x) \ge \underline{u}^N_{\delta_2} (t- t_K , x ; \underline{u}^N_{\delta_1}(t_K, \cdot; u^N_0) )
\] 
for every $t \in [t_K, T]$ and $x \in \mathbb{T}^d_N$. Recalling $t_K \leq t_*$, we define
\[
\rho^N_- (t,x) \coloneqq
\begin{cases}
\begin{aligned}
&\underline{u}^N_{\delta_1}(t,x; u^N_0) && \text{ if } t \in [0,t_K], \\
&\underline{u}^N_{\delta_2} (t,x; \underline{u}^N_{\delta_1}(t_K, \cdot; u^N_0)) && \text{ if } t \in (t_K, T] .
\end{aligned}
\end{cases}
\]
Then $\rho^N_- (t,x)$ is continuous in $t$ and we have $u^N (t,x) \geq \rho^N_- (t,x)$ for every $t \in [0,T]$ and $x \in \mathbb{T}^d_N$. 
%Moreover, since $\delta_2 = C/ \sqrt{K} e^2 $ and $t_K$ converges to zero as $N$ tends to infinity, $\rho^N_- $ is close to $u^N$ if the scaling parameter $N$ is sufficiently large. Indeed, by a similar argument given in \cite{IMMN17}, 

Next we show $ \lim_{ N \to \infty} \sup_{ x \in \mathbb{T}^d_N } | \rho^N_+ ( t, x ) - \rho^N_- (t, x) | = 0  $ for every $t \in [0, T]$ to obtain the result for $u^N$. We let $W^N = \rho^N_+ - \rho^N_-$ and $\overline{W}^N = \delta_1 t $ be functions defined in $Q_T$. Then we can easily see that $W^N$ satisfies 
\begin{equation}
\label{W}
\partial_t W^N = \Delta^N W^N + \delta_1 \underline{u}^N_{\delta_1} %\quad \text{ in } Q_{t_K}
\end{equation}
in $Q_{t_K}$ and that $\overline{W}^N $ is a super-solution of \eqref{W}. Therefore, since $W^N$ and $\overline{W}^N$ have the same initial function, the comparison principle (Lemma \ref{comparison}) implies that $W^N \le \overline{W}^N $ in $Q_{t_K}$. In particular we have $W^N \le \delta_1 t_K $ in $Q_{t_K}$. Similarly, we can bound $W^N$ by a function $\delta_1 t_K + \delta_2 (t- t_K)$ in $Q_T \setminus Q_{ t_K }$. Combining these results, we obtain 
\[
\max_{ Q_T } W^N
\le \max_{ Q_{t_K} } W^N + \max_{ Q_T \setminus Q_{t_K} } W^N 
\le 2 \delta_1 t_K + T \delta_2 
\] 
and the last quantity converges to zero as $N$ tends to infinity recalling $ t_K \searrow 0$ and $\delta_2 $ is proportional to $K^{-1/2}$. Hence we have $\lim_{N \to \infty} W^N = 0$ and obtain the assertion for $u^N$. 

Finally, we show the assertion for $v^N$. To see that, we have
\[
v^N (t,x) = v^N_0 (x) e^{-K \int_0^t u^N(\tau,x) d\tau} 
\leq v^N_0 (x) e^{-K \int_0^{t \wedge t_*} u^N (\tau,x) d\tau} 
\leq C v^N_0 (x) e^{-K \int_0^{t \wedge t_*} \underline{u}_\delta ( \tau, x / N ) d\tau} 
.\]
The last term converges to $0$ as $N$ tends to infinity since for any $t > 0$ the function $\underline{u}_\delta (t, \cdot) $ is bounded from below by a strictly positive constant independent of $N$ and thus we complete the proof. 
\end{proof}

%%%%%%%%%%%%%%%%%%%%%%%%%%%%%%%%%%%%%%%%%%%%%%%%%%%%
%   Case 2
%%%%%%%%%%%%%%%%%%%%%%%%%%%%%%%%%%%%%%%%%%%%%%%%%%%%
\section{Case 2: Moving interface}
\label{sec:case2}
For \textbf{Case 2}, our semi-discretized hydrodynamic limit system is given by 
\begin{equation}
\label{case2}
\begin{cases}
\partial_t u^N(t,x) = \Delta^N u^N(t,x) - K(N) u^N(t,x) v^N (t,x) v^N (t,x+z_1)\cdots v^N(t, x+z_{m-1}) \\
\partial_t v^N(t,x) = - K(N) u^N (t,x) v^N(t,x) 
\end{cases}
\end{equation}
for $t \in [0,T]$, $x \in \mathbb{T}^d_N$ and $z_i \in \mathbb{Z}^d, i=1,...,m-1$ with $m \geq 1$. In this section and the next section, we extend our semi-discretized functions $u^N=u^N(t,x)$ and $v^N=v^N(t,x)$ as a simple function on $[0,T] \times \mathbb{T}^d$ by (\ref{dsol}) and study limiting behavior of these extended functions. Looking the above semi-discretized reaction-diffusion system, the reaction term of the first equation contains the product of several $v^N$'s which are spatially dispersed. Since the diffusion coefficient for $v^N$ is zero, it seems that we may not be able to replace this product into the spatially homogeneous one. However, by the second equation of our system (\ref{case2}), we can see that derivatives of $v^N$ are controlled by those of $u^N$ and the initial function $v^N (0, \cdot)$, which enables us to conduct replacement procedure.

Limiting behavior of $u^N (t,\theta)$ and $v^N(t,\theta)$ as $N \to \infty$ is stated as follows.

%case2 thm
\begin{theorem}
\label{case2 thm}
Assume (A1), (A3)$_\delta$ and (B2) with some $\delta > 0 $. Let $u^N(t,\theta)$ and $v^N(t,\theta)$ be defined by (\ref{dsol}). Then there exists functions $u$ and $v$ on $Q_T$ such that  
\begin{align*}
& u \in L^\infty(Q_T) \cap L^2 (0,T;H^1(\mathbb{T}^d)), \, v \in L^\infty (Q_T) \\
& 0 \leq u,v \leq 1 \text{ and } uv=0 \text{ a.e. in } Q_T, \\
& u^N \to u \text{ strongly in } L^2(Q_T)  \text{ and a.e. in } Q_T , \\
& (v^N)^m \rightharpoonup v^m \text{ weakly in } L^2(Q_T) 
\end{align*}
as $N$ tends to infinity. Moreover, $w:=u-v^{m} /m$ satisfies 
\begin{equation}
\label{weak Stefan}
-\int_{\mathbb{T}^d} (u_0 - v_0^m/m ) \varphi(0) d\theta + \iint_{Q_T} (-w \varphi_t + \nabla w_+ \cdot \nabla \varphi) d\theta dt =0 
\end{equation}
for all $\varphi \in H^1 (Q_T)$ such that $\varphi(T,\cdot) \equiv 0$.
\end{theorem}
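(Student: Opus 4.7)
The plan is to work with the auxiliary quantity $w^N(t,x) := u^N(t,x) - v^N(t,x)^m/m$. From (\ref{case2}) and the identity $\partial_t (v^N)^m = -Km\,u^N (v^N)^m$, one obtains
\begin{equation*}
\partial_t w^N = \Delta^N u^N + R^N, \qquad R^N := K u^N \Bigl[(v^N)^m - v^N(x) \prod_{i=1}^{m-1} v^N(x+z_i)\Bigr],
\end{equation*}
so that the weak form (\ref{weak Stefan}) for $w = u - v^m/m$ will follow from this identity once I can (i) extract convergent subsequences via compactness, (ii) identify the limits and prove the segregation $uv \equiv 0$, and (iii) show $R^N$ vanishes against test functions.

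For (i), the $L^\infty$-bound (Lemma \ref{dsol est2}) and the $L^2(0,T;H^1)$-bound on $u^N$ (Lemma \ref{energy u}), together with the $L^1$-reaction bound (Lemma \ref{react int}) which places $\partial_t u^N$ in $L^2(0,T;H^{-1}) + L^1(Q_T)$, yield via a discrete Aubin--Lions argument a subsequence with $u^N \to u$ strongly in $L^2(Q_T)$ and almost everywhere, $\nabla^N u^N \rightharpoonup \nabla u$ weakly in $L^2$, and $u \in L^\infty(Q_T) \cap L^2(0,T;H^1(\mathbb{T}^d))$. For (ii), the explicit ODE solution
\begin{equation*}
v^N(t,x) = v^N(0,x)\exp\Bigl(-K\int_0^t u^N(s,x)\,ds\Bigr)
\end{equation*}
combined with $u^N \to u$ a.e.\ and $K(N) \to \infty$ forces $v^N \to 0$ a.e.\ on $\{u > 0\}$; passing to a further subsequence, $v^N \to v$ a.e.\ in $Q_T$ with $uv = 0$ a.e. This is reinforced quantitatively by integrating the second equation of (\ref{case2}) over $Q_T$, which gives $\iint_{Q_T} K u^N v^N\,d\theta dt \leq 1$, so $u^N v^N \to 0$ in $L^1(Q_T)$. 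Dominated convergence then upgrades to $(v^N)^m \to v^m$ strongly in $L^p(Q_T)$ for every $p < \infty$.

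For (iii), testing the identity for $w^N$ against $\varphi \in H^1(Q_T)$ with $\varphi(T,\cdot) = 0$ and applying discrete summation-by-parts in space and integration-by-parts in time yields
\begin{equation*}
-\int_{\mathbb{T}^d} w^N(0,\cdot)\,\varphi(0,\cdot)\,d\theta + \iint_{Q_T}\bigl(-w^N \partial_t \varphi + \nabla^N u^N \cdot \nabla^N \varphi\bigr)\,d\theta dt = \iint_{Q_T} R^N \varphi\,d\theta dt.
\end{equation*}
The initial pairing converges by (B2), and the left-hand side passes to $-\int_{\mathbb{T}^d}(u_0 - v_0^m/m)\varphi(0,\cdot)\,d\theta + \iint_{Q_T}(-w\,\partial_t\varphi + \nabla w_+ \cdot \nabla\varphi)\,d\theta dt$ by (i)--(ii) and the identity $u = w_+$ (a consequence of $uv = 0$). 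The main obstacle is the vanishing of $\iint_{Q_T} R^N \varphi\,d\theta dt$: since the diffusion coefficient of $v^N$ is zero, $v^N$ enjoys no intrinsic spatial regularity and $v^N(x+z_i)$ cannot be replaced by $v^N(x)$ through a gradient estimate. The rescue is that the microscopic shift by $z_i$ corresponds to a macroscopic shift by $z_i/N \to 0$, so $L^p$-translation continuity combined with the strong $L^p$-convergence of $v^N$ gives $v^N(\cdot + z_i/N) \to v$ in $L^p(Q_T)$ for each fixed $i$; telescoping the difference $(v^N)^m - v^N(x)\prod_{i=1}^{m-1} v^N(x+z_i)$ and bounding it through the uniform $L^1$-control of $K u^N v^N$ together with an equi-integrability argument then forces $\iint R^N \varphi \to 0$, which completes the derivation of (\ref{weak Stefan}).
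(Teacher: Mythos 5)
Your handling of $u^N$ (strong $L^2$ compactness from Lemmas \ref{dsol est2}, \ref{energy u}, \ref{react int}, identification of $\nabla u$, $uv=0$ from $\iint K u^N v^N \le 1$) and your derivation of the weak form by differentiating $w^N = u^N - (v^N)^m/m$ parallel the paper's argument. The genuine gap is in your treatment of $v^N$. From the uniform bound $0\le v^N\le 1$ you can only extract weak (or weak-$*$) convergent subsequences; almost-everywhere convergence of a subsequence is not available without prior strong $L^1$-compactness, and $v^N$ has none: it does not diffuse, its time derivative is of order $K$, and its discrete gradient is only $O(K^2)$, so over a fixed macroscopic distance $\alpha$ the oscillation bound $O(K^2|\alpha|)$ diverges and the Fr\'echet--Kolmogorov criterion gives nothing. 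The explicit formula $v^N(t,x)=v^N(0,x)e^{-K\int_0^t u^N}$ forces $v^N\to 0$ only where $\int_0^t u\,ds>0$; on the complementary set there is no pointwise control, so the claims ``$v^N\to v$ a.e.\ in $Q_T$'' and the ensuing dominated-convergence upgrade to strong $L^p$ convergence of $(v^N)^m$ are unsupported (note the theorem itself only asserts weak convergence of $(v^N)^m$, and the paper never obtains more than Lemma \ref{v relcpt}). Since your argument that $\iint R^N\varphi\to 0$ rests precisely on strong $L^p$ convergence of $v^N$ together with translation continuity, that step collapses; the fallback ``equi-integrability plus the bound $\iint Ku^Nv^N\le 1$'' cannot rescue it, because that bound controls each of the two terms in $R^N$ separately by $O(1)$, whereas the factor $K$ demands a quantitative $o(1/K)$ estimate on their difference.

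The missing idea is the one you explicitly ruled out: $v^N$ \emph{does} admit a usable gradient estimate, inherited from $u^N$ and the initial data rather than from diffusion. From the explicit solution, the Lipschitz continuity of $z\mapsto e^{-z}$ on $[0,\infty)$, Lemma \ref{grad u} ($|\nabla^N u^N|=O(K)$) and assumption (A1) ($|\nabla^N v^N(0,\cdot)|\le C_0K$), one gets $|\nabla^N v^N(t,x)|=O(K^2)$ uniformly. Because the shifts $z_i$ are microscopic, telescoping along lattice paths gives the pointwise bound $|(v^N)^m - v^N(x)\prod_i v^N(x+z_i)| = O(K^2/N)$, hence $R^N=O(K^3/N)$, which vanishes under (A3)$_\delta$ since $K\le\delta(\log N)^{1/2}$; this is exactly the replacement the paper performs, and it works precisely because the displacement is $O(1/N)$ rather than a fixed macroscopic distance. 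Finally, you stop at a subsequential limit: the theorem asserts convergence of the full sequence, which the paper obtains by invoking the uniqueness of the weak solution of the one-phase Stefan problem (\cite{HHP00}); this step should be added once the weak form is established.
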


The equation (\ref{weak Stefan}) is the weak formulation of the one-phase Stefan problem. As stated in \cite{IMMN17}, assuming the limiting interface is smooth and further $u$ and $v$ are smooth on their support, one can write (\ref{weak Stefan}) as a strong form (\ref{strong Stefan}). The problem (\ref{strong Stefan}) is the classical formulation of the one-phase Stefan problem with the latent heat $w_0|_{\Gamma(t)}/m$. Derivation of (\ref{strong Stefan}) from the weak form (\ref{weak Stefan}) can be done analogously to \cite{HHP00}.

Our plan to prove Theorem \ref{case2 thm} is as follows: first we show relative compactness of the sequence $\{ u^N(t,\theta) \}_{N \in \mathbb{N}}$ and $\{ v^N (t,\theta) \}_{N \in \mathbb{N}}$ so that they are convergent along a subsequence and then we show that any limit points along this subsequence satisfy the weak form of the one-phase Stefan problem (\ref{weak Stefan}). Moreover, according to the uniqueness of weak solution of one-phase Stefan problem, we can show that the above convergence holds for the full sequence.

Following this procedure, we first show that the sequence of discrete solutions $\{u^N (t,\theta) \}_{N \in \mathbb{N}}$ is relatively compact in $L^p(Q_T)$ for any $p \ge 2$ without any restriction on reaction rate $c_1$.

\begin{lemma}
\label{u relcpt}
We assume the same conditions as Theorem \ref{case2 thm}. Then the sequence $\{ u^N(t,\theta) \}_{N \in \mathbb{N}}$ is relatively compact in $L^p(Q_T)$ for any $p \ge 2$. 
\end{lemma}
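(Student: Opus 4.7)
The plan is to apply a Kolmogorov--Riesz compactness criterion on $Q_T$ to the step-function extensions $\{u^N(t,\theta)\}$ defined in \eqref{dsol}. By Lemma \ref{dsol est}, $0 \leq u^N(t,\theta) \leq 1$ throughout $Q_T$, so the sequence is uniformly bounded in $L^\infty(Q_T)$ and hence in every $L^p(Q_T)$. Because of this uniform $L^\infty$ bound, strong convergence in $L^2(Q_T)$ of any subsequence automatically upgrades to strong convergence in $L^p(Q_T)$ for every $p \in [2,\infty)$ by dominated convergence, so it suffices to prove relative compactness in $L^2(Q_T)$.

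First I would establish a uniform spatial modulus of continuity. Lemma \ref{energy u} bounds $\{\nabla^N u^N\}$ uniformly in $L^2(Q_T)$; a direct telescoping argument on nearest-neighbor differences on the step-function interpolant \eqref{dsol} then gives, for every $\theta' \in \mathbb{T}^d$,
\[
\int_0^T \int_{\mathbb{T}^d} \bigl| u^N(t,\theta+\theta') - u^N(t,\theta) \bigr|^2 \, d\theta\, dt \leq C \bigl( |\theta'|^2 + N^{-2} \bigr),
\]
which vanishes as $|\theta'| \to 0$ after first taking $N$ large (the $N^{-2}$ term records the intrinsic discretization error of the step function).

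Second, I would control the temporal modulus by exploiting the semi-discretized equation \eqref{dHDL eq} tested against smooth functions. After discrete integration by parts, $\partial_t u^N$ splits into the diffusive piece $\Delta^N u^N$, which is bounded in $L^2(0,T; H^{-1}(\mathbb{T}^d))$ by duality with the $L^2$-bound on $\nabla^N u^N$, and the reaction piece $-K c_{1,\cdot}(u^N,v^N) u^N v^N$, which by Lemma \ref{react int} is bounded in $L^1(Q_T)$. Using the embedding $L^1(\mathbb{T}^d) \hookrightarrow H^{-s}(\mathbb{T}^d)$ for $s > d/2$, the distributional time derivative $\partial_t u^N$ is then uniformly bounded in $L^2(0,T; H^{-1}) + L^1(0,T; H^{-s})$. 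Combining this negative-Sobolev bound on $\partial_t u^N$ with the positive-Sobolev bound on $u^N$ itself via a standard duality/interpolation argument yields a uniform temporal equicontinuity estimate $\|u^N(\cdot+h,\cdot) - u^N(\cdot,\cdot)\|_{L^2(Q_T)} \to 0$ as $h \to 0$.

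The main obstacle is that the reaction term carries the diverging factor $K = K(N) \to \infty$, so it is \emph{not} uniformly bounded in any $L^2$-based space; the only uniform control available is the spacetime $L^1$-bound of Lemma \ref{react int}, which forces one to place $\partial_t u^N$ in a weaker negative Sobolev space and to rely on Sobolev embedding to close the argument. Once the uniform spatial and temporal moduli of continuity above are established, the Kolmogorov--Riesz theorem (equivalently, a discrete Aubin--Lions-type compactness argument) yields relative compactness of $\{u^N\}$ in $L^2(Q_T)$, and hence in every $L^p(Q_T)$ with $p \geq 2$ by the uniform $L^\infty$ bound.
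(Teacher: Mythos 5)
Your overall strategy (uniform $L^\infty$ bound from Lemma \ref{dsol est}, reduction to $L^2$, spatial and temporal translate estimates, then Kolmogorov--Riesz/Fr\'echet--Kolmogorov) matches the paper's, and your spatial step is essentially the paper's: telescoping over nearest-neighbour differences and invoking the uniform energy estimate of Lemma \ref{energy u} (the paper works at exponent $p=1$ with sub-grid shifts of size $1/(rN)$, getting the clean bound $\alpha\|\nabla^N u^N\|_{L^2(Q_T)}$ with no $N^{-2}$ remainder, and then upgrades to all $p$ by boundedness). You also correctly identify the crux, namely that the reaction term carries the divergent factor $K(N)$ and is only controlled in $L^1(Q_T)$ via Lemma \ref{react int}. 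Where you genuinely diverge is the temporal modulus. The paper avoids negative Sobolev spaces entirely: it writes $u^N(t+\tau)-u^N(t)=\int_0^\tau\partial_t u^N(t+s)\,ds$, pairs this with the difference $u^N(t+\tau)-u^N(t)$ itself, handles the diffusive piece by discrete summation by parts plus Cauchy--Schwarz (again Lemma \ref{energy u}), and absorbs the reaction piece by bounding the paired difference in $L^\infty$ by a constant and applying Lemma \ref{react int}; this yields the modulus $C\tau$ for $p=2$ directly. Your route (place $\partial_t u^N$ in $L^2(0,T;H^{-1})+L^1(0,T;H^{-s})$ and close by an Aubin--Lions/Ehrling interpolation) is a legitimate alternative packaging of the same two a priori estimates, but as written it has one step that does not go through verbatim: the step-function extension \eqref{dsol} does \emph{not} belong to $H^1(\mathbb{T}^d)$ (its distributional gradient consists of jumps), so there is no ``positive-Sobolev bound on $u^N$ itself,'' only a bound on the discrete gradient, and the interpolation inequality $\|w\|_{L^2}\le\varepsilon\|w\|_{H^1}+C_\varepsilon\|w\|_{H^{-s}}$ cannot be applied to $w=u^N(t+h,\cdot)-u^N(t,\cdot)$ as stated. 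This is repairable in a standard way --- either replace the step function by a piecewise multilinear interpolant whose genuine $H^1$ norm is controlled by $\|\nabla^N u^N\|_{L^2}$ and whose $L^2$ distance to $u^N$ is $O(N^{-1})$, or substitute for Ehrling's lemma a mollification argument in which the spatial translate estimate you already proved controls $\|u^N-u^N\ast\rho_\varepsilon\|_{L^2(Q_T)}$ and the weak-norm bound on $\partial_t u^N$ gives time-equicontinuity of the mollified functions --- but the repair must be made explicit; compared with this, the paper's direct pairing argument is the more elementary way to close the temporal estimate.
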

\begin{proof}
In the sequel, we show that there exists a positive constant $C$ such that 
\begin{align*}
\int_0^{T-\tau} \int_{\mathbb{T}^d} | u^N(t+\tau,\theta) -u^N(t,\theta) |^p d\theta dt  \leq C \tau, \\
\int_0^T \int_{\mathbb{T}^d} |u^N(t,\theta+\alpha) - u^N(t,\theta) |^p d\theta dt \leq C | \alpha |
\end{align*}
for all $p \ge 2$, $\tau \in (0,T)$ and $\alpha \in \mathbb{R}^d$ sufficiently small. Once these estimates are proved, we complete the proof of lemma by the Fr\'{e}chet-Kolmogorov theorem (see for example \cite{B83}, Theorem \Rnum{4}.25 and Corollary \Rnum{4}.26).

First we show the equi-continuity along spatial direction with exponent $p= 1$. Once the case when $p =1$ is proved, then we obtain the assertion for any exponent $p \ge 1$ according to the uniform boundedness of $u^N$. Change of variables enables us to restrict our cases for non-negative $\alpha$. In this case, we observe 
\[
\begin{aligned}
& \iint_{Q_T} | u^N (t, \theta + n/N ) - u^N (t, \theta)| d\theta dt \le \frac{ n }{N} \iint_{ Q_T } | \nabla^N u^N (t, \theta) | d\theta dt , \\
& \iint_{ Q_T } | u^N (t, \theta + 1/r N ) - u^N (t, \theta) | d\theta dt \le \frac{1 }{r N } \iint_{ Q_T} | \nabla^N u^N (t, \theta) | d\theta dt  
\end{aligned}
\]
for every $n \in \mathbb{Z}_+ = \{ 0, 1,...\}$ and $r \ge 1$. Combining these two estimates and applying them for $\alpha = n / rN$ with $n = \lceil \alpha N \rceil$ and $r = \lceil \alpha N \rceil  / \alpha N $ to obtain
\[
\iint_{Q_T} | u^N (t, \theta + \alpha ) - u^N (t, \theta) | d\theta dt \le \alpha \iint_{Q_T } | \nabla^N u^N (t, \theta) | d\theta dt \le \alpha \| \nabla^N u^N \|_{ L^2 (Q_T)}
\]
for every $\alpha \ge 0$ where in the last estimate we used H\"{o}lder's inequality. According to the uniform energy estimate Lemma \ref{energy u}, we obtain the equi-continuity in spatial variables for any index $p \ge 1$. In particular, the second assertion holds for any $p \ge 2$.

Similarly, it suffices to prove the equi-continuity in time argument only for the case $p=2$ by again using the fact that $u^N $ is bounded uniformly in $N$. We remark here that when $1 \le p < 2$ another exponent for $\tau$ is needed so that we restrict our cases only for $p \ge 2$. The integral appearing in the left hand side of the first estimate for $p=2$ is equal to   
\[
\int_0^{T-\tau} \int_{\mathbb{T}^d} \left(  \int_0^\tau  \partial_t u^N(t+s,\theta)  ds \right)  \left( u^N(t+\tau,\theta) -u^N(t,\theta) \right)  d\theta dt 
.\]
However, using the first equation of (\ref{dHDL eq}) for the integrand, this quantity can be estimated from above by
\begin{align*}
&\int_0^\tau
 \left( \int_0^{T-\tau} \int_{\mathbb{T}^d} \left| \nabla^N u^N(t+s,\theta) \right|^2 d\theta dt \right)^{ 1 /2 }
 \left( \int_0^{T-\tau} \int_{\mathbb{T}^d} \left| \nabla^N u^N ( t + \tau ,\theta) \right|^2 d\theta dt \right)^{ 1/2 } ds \\
&+\int_0^\tau
 \left( \int_0^{T-\tau} \int_{\mathbb{T}^d} \left| \nabla^N u^N(t+s,\theta) \right|^2 d\theta dt \right)^{1/2}
 \left( \int_0^{T-\tau} \int_{\mathbb{T}^d} \left| \nabla^N u^N(t,\theta) \right|^2 d\theta dt \right)^{1/2} ds \\
&+2K \int_0^\tau \int_0^{T-\tau} \int_{\mathbb{T}^d} c_1(x, u^N(t+s), v^N(t+s) ) u^N(t+s,x) v^N(t+s,x) d\theta dt ds  
.\end{align*}
Here we used Schwarz's inequality to estimate the first and the second terms. Thus we get the desired estimate in view of Lemmas \ref{react int} and \ref{energy u}. 
\end{proof}

On the other hand, for the relative compactness of $v^N$, we only impose the following existence of a weakly convergent subsequence which is obvious from the uniform boundedness of $v^N $ in view of Lemma \ref{dsol est}. 

\begin{lemma}
\label{v relcpt}
We assume the same condition as Theorem \ref{case2 thm}. Then for any $p > 1$, the sequence $\{ v^N  \}_{ N \in \mathbb{N}}$ is weakly precompact in $L^p (Q_T)$. Namely, there exists a subsequence $(N_k)$ and $v \in L^p (Q_T)$ such that $v^{N_k } \rightharpoonup v$ weakly in $L^p (Q_T)$. 
\end{lemma}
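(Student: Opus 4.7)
The plan is to exploit the uniform pointwise boundedness of $v^N$ that follows from Lemma \ref{dsol est}, combined with the reflexivity of $L^p(Q_T)$ for $1 < p < \infty$. Since the assumption (A1) provides the bound $0 < v^N (0, x) < 1$ for every $x \in \mathbb{T}^d_N$, Lemma \ref{dsol est} yields $0 \le v^N (t,x) \le 1$ for every $t \in [0,T]$ and $x \in \mathbb{T}^d_N$. Passing this pointwise bound through the simple-function extension \eqref{dsol}, one immediately obtains $\| v^N \|_{L^\infty(Q_T)} \le 1$ uniformly in $N$.

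Since $Q_T = [0,T] \times \mathbb{T}^d$ has finite Lebesgue measure, this $L^\infty$-bound implies
\[
\sup_{N \in \mathbb{N}} \| v^N \|_{L^p(Q_T)} \le \big( T \cdot | \mathbb{T}^d | \big)^{1/p} < \infty
\]
for every $p \ge 1$. Thus $\{ v^N \}_{N \in \mathbb{N}}$ is a bounded sequence in $L^p(Q_T)$ for all $p \ge 1$.

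For the weak precompactness, I would invoke reflexivity: when $1 < p < \infty$, the space $L^p(Q_T)$ is reflexive, and by the Banach--Alaoglu theorem (or equivalently the Eberlein--\v{S}mulian theorem) every bounded sequence in a reflexive Banach space admits a weakly convergent subsequence. Applying this to the bounded sequence $\{ v^N \}$ produces a subsequence $(N_k)$ and a function $v \in L^p (Q_T)$ such that $v^{N_k} \rightharpoonup v$ weakly in $L^p(Q_T)$, which is precisely the claim.

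There is no real obstacle here: the statement is essentially a direct consequence of the uniform $L^\infty$-bound from Lemma \ref{dsol est} and standard functional analysis on reflexive $L^p$ spaces. The only minor subtlety worth a remark is that the limit $v$ a priori might depend on $p$, but a diagonal extraction along a countable dense set of exponents (or the observation that weak convergence in a larger $L^p$ together with uniform $L^\infty$-bounds passes to weak convergence in smaller $L^q$'s) shows one may take a single subsequence valid for all $p > 1$ simultaneously; in fact $v \in L^\infty(Q_T)$ with $0 \le v \le 1$ a.e.\ by weak$^*$ lower semicontinuity of the $L^\infty$-norm.
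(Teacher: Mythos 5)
Your argument is correct and is exactly the route the paper intends: the paper states the lemma follows from the uniform bound $0 \le v^N \le 1$ of Lemma \ref{dsol est} and omits further details, and your proof simply fills these in (uniform $L^p$-bound on the finite-measure set $Q_T$ plus reflexivity of $L^p$ for $1<p<\infty$). Nothing more is needed.
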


%Next we show relative compactness for the sequence $\{ v^N(t,\theta) \}_{N \in \mathbb{N}}$. Looking the proof of Lemma \ref{u relcpt}, we need uniform boundedness of derivatives $|v^N(t,\theta)|^2$ integrated over $Q_T$, though it may not be derived as an a priori estimate from the second equation of our reaction-diffusion system (\ref{dHDL eq}). However, as we stated in the beginning of this section, when the reaction rate $c_1$ has the form of \textbf{Case 2}, we can show the following lemma. By this lemma and noting that the uniform bound for integral of reaction term can immediately obtained from the second equation of (\ref{dHDL eq}), we get the relative compactness of the sequence of $\{ v^N(t,\theta) \}_{N \in \mathbb{N}}$. 

\begin{proof}[Proof of Theorem \ref{case2 thm}]
For any $p >1$, by Lemma \ref{u relcpt} the sequences $\{ u^N(t,\theta) \}_{N \in \mathbb{N}}$ is strongly precompact in $L^p(Q_T)$, while by Lemma \ref{v relcpt} $\{ v^N(t,\theta) \}_{N \in \mathbb{N}}$ is weakly precompact in $L^p (Q_T)$. Therefore, there exists a subsequence $\{  N_k \}$ and functions $u, v \in L^p (Q_T)$ such that 
\[
\begin{aligned}
u^{N_k} \to u \text{ strongly in } L^p(Q_T), \quad v^{N_k} \rightharpoonup v \text{ weakly in } L^p(Q_T)
\end{aligned}
\]
for any $p > 1$. Moreover, by taking further subsequences if necessary (which again denoted by $N_k$), we see that $u^{N_k} \to u$ a.e. in $Q_T$. 
Next we show that the limit function $u$ belongs to $L^2 (0, T; H^1 (\mathbb{T}^d))$. For any test function $\varphi \in C^\infty (\mathbb{T}^d)$, $j=1,...,d$ and $t \in [0, T]$, we have 
\[
\int_{ \mathbb{T}^d } u^N (t, \theta) \partial^N_j \varphi (\theta ) d\theta = - \int_{ \mathbb{T}^d } \varphi (\theta ) \partial^N_j u^N (t, \theta)  d\theta 
\]
where $\partial^N_j $ is the discrete partial derivative on $j$-th direction defined by $\partial^N_j u (\theta) \coloneqq N \left[ u (\theta + e_j/N) - u (\theta) \right] $ for every $u : \mathbb{T}^d \to \mathbb{R}$. Taking limit along $(N_k)$ on the above identity, we see that $\partial^N_j u^N $ converges to the $j$-th partial derivative $\partial_j u$ in distributional sense for every $j=1,...,d$. Moreover, since $L^2 (\mathbb{T}^d )$-norm of the discrete derivative $\partial^N_j u^N (t, \cdot )$ is bounded above by some constant independent of $N$ in view of Lemma \ref{energy u}, $\partial_j u (t, \cdot)$ belongs to $L^2 (\mathbb{T}^d)$ for every $j = 1,...,d$ and thus we obtain $u \in L^2 (0,T; H^1 (\mathbb{T}^d))$. Moreover, by the second equation of (\ref{dHDL eq}), we have
\[
\iint_{Q_T} u^N(t,\theta) v^N(t,\theta) d\theta dt \leq \frac{1}{K}
\] 
for every $N \in \mathbb{N}$. Since $u^{N_k} \to u$ strongly in $L^2 (Q_T)$ and $v^{N_k} \rightharpoonup v $ weakly in $L^2(Q_T)$ as $k$ tends to infinity, their product $u^{N_k}v^{N_k}$ converges strongly in $L^1(Q_T)$ to $u v$. Therefore, taking limit along $N_k$ on the above bound, we get $uv=0$ a.e. in $Q_T$. 

Next we let $w^N:=u^N-(v^N)^m/m$ for every $N \in \mathbb{N}$. Note here that it is already shown that $w^N$ converges weakly to some $w$ along some subsequence $N_k$. We show that any limit point $w$ satisfies the weak form of the one-phase Stefan problem (\ref{weak Stefan}). To see this, we first rearrange the reaction term of the first equation of (\ref{case2}) to the homogeneous one, namely, we show for every $t \in [0,T]$ and $x \in \mathbb{T}^d_N$, the absolute value of the difference 
\[
Ku^N(t,x) v^N(t,x) \cdots v^N(t,x+z_{m-1}) - K u^N(t,x) v^N(t,x)^m 
\]
converges to $0$ as $N$ tends to infinity. For simplicity we may assume $m=2$ since for the case $m\geq 3$ it can be proven in a similar way. For $z_1 \in \mathbb{Z}^d$, let $\{ y_j \}_{j=0,...,|z_1|}$ be one of minimal paths from the origin $O$ of $\mathbb{Z}^d$ to $z_1$, namely $y_j \in \mathbb{Z}^d$, $y_0=O$, $y_{|z_1|}= z_1$ and $|y_{j+1}-y_{j}|=1$ for $j=0,...,|z_1|-1$. Then, by the triangle inequality, we have
\begin{align*}
&\left| K u^N(t,x) v^N(t,x) v^N(t,x+z_1) - K u^N(t,x) v^N(t,x)^2  \right|  \\
&\leq K \sum_{j=0}^{|z_1|-1} \left| v^N(t,x+y_{j+1})- v^N(t,x+y_{j})  \right|  \le \frac{K}{N} \sum_{j=0}^{|z_1|-1} \left| \nabla^N v^N(t,x+y_{j})  \right|
\end{align*}
for every $t \in [0,T]$ and $x \in \mathbb{T}^d_N$. Here we can see that the derivative of $v^N(t,x)$ has order $O(K)$ for $N \in \mathbb{N}$ since it was controlled by that of $u^N$ in view of the second equation of \eqref{case2}. Indeed, by the second equation of \eqref{case2}, we have for every $t \in [0,T]$ and $x \in \mathbb{T}^d_N $ that
\[
v^N (t, x) = v^N (0, x ) e^{ - K \int_0^t u^N (\tau, x) d\tau } ,  
\]
which implies that for every $j = 1,...,d$ the difference $v^N (t, x + e_j ) - v^N (t, x)$ is equals to 
\[
v^N (0,x + e_j ) \big( e^{ - K \int_0^t u^N (\tau,  x + e_j ) d\tau } - e^{ - K \int_0^t u^N (\tau, x ) d \tau }  \big) 
+ \big( v^N (0, x+ e_j) - v^N (0, x) \big)  e^{ - K \int_0^t u^N (\tau., x) d \tau }  . 
\]
Therefore, since the function $z \mapsto e^{ -z }$ is Lipschitz continuous on $[0, \infty)$, Lemma \ref{grad u} and the assumption (A1) assures that $\nabla^N v^N (t, x)$ has order $ O (K^2) $ as $ N $ tends to infinity. According to this derivative estimate for $v^N$, the above difference between the inhomogeneous product of $v^N$'s and the spatially-homogeneous one has order $O(K^3 / N)$, which converges to $0$ as $ N $ tends to infinity by the assumption (A3)$_\delta$. 

After rearranging the reaction term of the first equation of (\ref{case2}) to the homogeneous one, we subtract the second equation multiplied by $v^N(t,x)^{m-1}$ from the first equation to obtain 
\[
\partial_t w^N (t,x) = \Delta^N u^N(t,x) + O ( K^3 / N )
\] 
for every $t \in [0,T]$ and $x \in \mathbb{T}^d_N$ as $N$ tends to infinity. Test $\varphi \in H^1(Q_T)$ such that $\varphi(T,\cdot) \equiv 0$ and integrate over $Q_T$ to obtain the weak form (\ref{weak Stefan}) with $w^N$ instead of $w$ with an error term which vanishes as $N$ tends to infinity. Then we take limit along the subsequence $(N_k)$ to get a weak solution $w$. 

Finally, since it is known that the weak solution of (\ref{weak Stefan}) is unique (see \cite{HHP00}), above convergence occurs along the full sequence and thus we complete the proof of Theorem \ref{case2 thm}. 
\end{proof}

%%%%%%%%%%%%%%%%%%%%%%%%%%%%%%%%%%%%%%%%%%%%%%%%%%%%
%   Case 3
%%%%%%%%%%%%%%%%%%%%%%%%%%%%%%%%%%%%%%%%%%%%%%%%%%%%
\section{Case 3: Immovable interface}
\label{sec:case3}
In this section we consider the semi-discretized reaction-diffusion system
\begin{equation}
\label{case3}
\begin{cases}
\partial_t u^N(t,x) = \Delta^N u^N(t,x) - K(N) u^N(t,x) v^N (t,x)  \\
\partial_t v^N(t,x) = - K(N) u^N (t,x) u^N (t, x+z_1) \cdots u^N (t, x+ z_{m-1} ) v^N(t,x)  
\end{cases}
\end{equation}
where $t \in [0,T]$, $x \in \mathbb{T}^d_N$ and $z_i \in \mathbb{Z}^d, i=1,...,m-1$ with $m > 1$. For every $N \in \mathbb{N}$, let $ u^N (t,\theta) $ and $v^N (t, \theta)$ be the macroscopic functions on $Q_T= [0,T]\times \mathbb{T}^d$ defined by (\ref{dsol}). Then we have the following convergence of $u^N (t,\theta)$ and $v^N(t,\theta)$ as $N$ tends to infinity.

\begin{theorem}
\label{case3 thm}
Assume (A1), (A3)$_\delta$ and (B3) with some $\delta > 0 $. Let $u^N =  u^N(t,\theta) $ and $v^N =  v^N(t,\theta) $ be defined by (\ref{dsol}). Then there exist subsequences $\{ u^{N_k}\}$ and $\{ v^{N_k} \} $ of $\{ u^N \}$ and $\{ v^N \}$, respectively, and $u$, $v$, $\zeta$ such that 
\begin{equation}
\label{case3 est1}
\begin{aligned}
&u, u^{m/2} \in L^\infty(Q_T ) \cap L^2 (0,T;H^1(\mathbb{T}^d)), v \in L^\infty (Q_T), \zeta \in H^{-1} (Q_T), \\
&0 \leq u \leq 1, 0 \leq v \leq 1, uv=0 \text{ a.e. in } Q_T, \\
&\zeta \geq 0 \text{ in } H^{-1} (Q_T), \quad u(0, \cdot) = u_0 (\cdot), \, v(0, \cdot) = v_0 (\cdot)
\end{aligned}
\end{equation}
and
\begin{align}
\label{case3 est2}
& u^{N_k} \to u,  (u^{N_k})^{m/2} \to u^{m/2} &&\text{ strongly in } L^p(Q_T) \text{ and a.e. in } Q_T, \\
\label{case3 est3}
&v^{N_k} \rightharpoonup v  &&\text{ weakly in } L^p(Q_T), \\
\label{case3 est4}
&{\left| \nabla^{N_k}  (u^{N_k})^{m/2} \right|}^2 \overset{*}{\rightharpoonup} \zeta  &&\text{ weakly}^* \text{ in } H^{-1} (Q_T) 
\end{align}
for any $p \ge 2$ as $N_k$ tends to infinity. Moreover, $u$, $v$ and $\zeta$ satisfy
\begin{equation}
\label{case3 weak}
\iint_{Q_T} \left\{ -\left( \frac{u^m}{m}-v \right) \varphi_t + \frac{2}{m} u^{\frac{m}{2}} \nabla u^{\frac{m}{2}} \cdot \nabla \varphi \right\} d\theta dt + \frac{4(m-1)}{m^2}  {}_{H^{-1}(Q_T)}\langle \zeta, \varphi \rangle_{H^1_0 (Q_T)}=0 
\end{equation}
for all $\varphi \in H^1_0 (Q_T)$. 
\end{theorem}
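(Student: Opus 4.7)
The plan is to mimic Theorem~\ref{case2 thm} in broad strokes---extract subsequential limits from uniform a priori estimates, cancel the stiff $K$-scaled reaction by combining the two equations of (\ref{case3}), and pass to the limit in the resulting weak identity---while accommodating the porous-medium-type flux $u^{m-1}\Delta u$ through an energy estimate on $(u^N)^{m/2}$ and an auxiliary limit $\zeta$ capturing the quadratic gradient term. Besides the bounds $0\le u^N,v^N\le 1$, $\|\nabla^N u^N\|_{L^2(Q_T)}\le C$, and $K\iint u^N v^N\,d\theta\,dt\le 1$ already supplied by Section~\ref{sec:PDE}, the crucial new estimate is $\|\nabla^N(u^N)^{m/2}\|_{L^2(Q_T)}\le C$. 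I would obtain it by multiplying the first equation of (\ref{case3}) by $(u^N)^{m-1}/N^d$, summing over $x\in\mathbb{T}^d_N$, performing discrete summation by parts, and invoking the Cauchy--Schwarz-based inequality
\[
(a^{m-1}-b^{m-1})(a-b)\;\ge\;\tfrac{4(m-1)}{m^2}\bigl(a^{m/2}-b^{m/2}\bigr)^2,\qquad a,b\ge 0,
\]
to dominate the resulting discrete Dirichlet form. Since $r\mapsto r^{m/2}$ is Lipschitz on $[0,1]$ when $m\ge 2$, the spatial/temporal equi-continuity argument of Lemma~\ref{u relcpt} applies verbatim to both $\{u^N\}$ and $\{(u^N)^{m/2}\}$; Fr\'echet--Kolmogorov then yields strong precompactness in every $L^p(Q_T)$, while $\{v^N\}$ is weakly precompact from its $L^\infty$ bound and $\{\nabla^N(u^N)^{m/2}\}$ is weakly precompact in $L^2(Q_T)$. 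A diagonal extraction produces a common subsequence realising (\ref{case3 est2}), (\ref{case3 est3}), and $\nabla^{N_k}(u^{N_k})^{m/2}\rightharpoonup \nabla u^{m/2}$ weakly in $L^2(Q_T)$; the constraint $uv=0$ a.e.\ follows from $\iint uv\,d\theta\,dt \le \liminf K^{-1}=0$.

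To derive (\ref{case3 weak}) I would set $w^N:=(u^N)^m/m-v^N$ and compute, using (\ref{case3}) and the chain rule in time,
\[
\partial_t w^N \;=\; (u^N)^{m-1}\Delta^N u^N + K\bigl[(u^N(x))^m - u^N(x)\,u^N(x+z_1)\cdots u^N(x+z_{m-1})\bigr]\,v^N,
\]
so that the stiff $K$-reactions cancel up to the telescoping replacement of the dispersed product by its homogeneous counterpart. Lemma~\ref{grad u} and (A3)$_\delta$ bound this replacement error by $O(K^2/N)=o(1)$. Testing with $\varphi\in H^1_0(Q_T)$, integrating by parts in time on the left (using that $\varphi$ vanishes at $t=0,T$), and performing discrete summation by parts on the right, produces a flux term which---through the discrete identity $(u^N)^{m-1}\nabla^N u^N = \tfrac{2}{m}(u^N)^{m/2}\nabla^N(u^N)^{m/2}+o(1)$ and the strong/weak pairing obtained above---converges to $-\tfrac{2}{m}\iint u^{m/2}\nabla u^{m/2}\cdot\nabla\varphi\,d\theta\,dt$, together with an energy-type contribution of the form $-\tfrac{4(m-1)}{m^2}\iint |\nabla^N(u^N)^{m/2}|^2\varphi\,d\theta\,dt$ plus vanishing discrete chain-rule defects. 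Defining $\zeta$ as the weak-$*$ limit in $H^{-1}(Q_T)$ of $|\nabla^{N_k}(u^{N_k})^{m/2}|^2$ along a further subsequence then yields both (\ref{case3 est4}) and the identity (\ref{case3 weak}); pointwise non-negativity of the pre-limit quantities transfers to $\zeta\ge 0$ when tested against non-negative elements of $H^1_0(Q_T)$.

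\textbf{Main obstacle.} The hard part will be the existence of the limit $\zeta$ as a genuine element of $H^{-1}(Q_T)$: the natural bound on $|\nabla^N(u^N)^{m/2}|^2$ coming from the energy estimate is only in $L^1(Q_T)$, which does not embed into $H^{-1}(Q_T)$ in dimensions $d\ge 2$. To recover a uniform $H^{-1}$ bound I would rewrite the discrete analogue of the PDE identity,
\[
\tfrac{4(m-1)}{m^2}|\nabla^N(u^N)^{m/2}|^2 \;=\; -\tfrac{1}{m}\partial_t(u^N)^m + (u^N)^{m-1}\Delta^N u^N - K(u^N)^m v^N + (\text{non-negative discrete defect}),
\]
and control each of the three leading terms in $H^{-1}(Q_T)$: the $\partial_t$-term by the uniform $L^\infty$ bound on $(u^N)^m$, the Laplacian term by recasting it (via summation by parts) as the divergence of a factor with uniform $L^2$ control, and the reaction term by the bound $K\iint u^N v^N \le 1$. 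A secondary but delicate technical burden is the careful bookkeeping of discrete product- and chain-rule defects used repeatedly above; all such corrections are of order $K^2/N$ or smaller and are killed by assumption (A3)$_\delta$ with $\delta$ small.
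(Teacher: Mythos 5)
Most of your outline coincides with the paper's proof: the energy bound for $(u^N)^{m/2}$ (the paper obtains it by multiplying the first equation of (\ref{case3}) by $(u^N)^{m-1}$, summing by parts, and using a discrete chain rule with $O(K/N)$ defects; your convexity inequality $(a^{m-1}-b^{m-1})(a-b)\ge \tfrac{4(m-1)}{m^2}(a^{m/2}-b^{m/2})^2$ is an equally valid, arguably cleaner route), the Fr\'echet--Kolmogorov compactness for $u^N$ and $(u^N)^{m/2}$, weak compactness for $v^N$, the identity $uv=0$ from $K\iint u^Nv^N\le1$, and the cancellation of the stiff reaction by homogenizing the dispersed product in the second equation with error $O(K^2/N)$ before testing with $\varphi\in H^1_0(Q_T)$. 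Up to that point you are following the paper's argument.

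The genuine problem is your resolution of the self-identified ``main obstacle,'' the uniform $H^{-1}(Q_T)$ bound on $|\nabla^N(u^N)^{m/2}|^2$. First, the displayed identity is incorrect: since time is continuous in the semi-discrete system, $\tfrac1m\partial_t(u^N)^m=(u^N)^{m-1}\Delta^N u^N-K(u^N)^m v^N$ holds exactly, so your right-hand side collapses to the ``defect'' alone and the identity is circular; the correct decomposition must carry the discrete divergence $\nabla^N\!\cdot\!\bigl((u^N)^{m-1}\nabla^N u^N\bigr)$ in place of $(u^N)^{m-1}\Delta^N u^N$. Second, and more seriously, you propose to control the retained reaction term $K(u^N)^m v^N$ in $H^{-1}(Q_T)$ by the bound $K\iint u^N v^N\,d\theta\,dt\le1$; this is only an $L^1(Q_T)$ bound, and $L^1\not\hookrightarrow H^{-1}$ when $d\ge2$ --- exactly the obstruction you yourself raise for the gradient-squared term, so this step begs the question. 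The repair is already contained in your second paragraph and is what the paper does: after cancelling the reactions one has $\partial_t\bigl(\tfrac{(u^N)^m}{m}-v^N\bigr)=(u^N)^{m-1}\Delta^N u^N+O(K^2/N)$, and testing this with $\varphi\in H^1_0(Q_T)$ gives the discrete analogue of (\ref{case3 weak}) in which $\tfrac{4(m-1)}{m^2}\iint|\nabla^N(u^N)^{m/2}|^2\varphi\,d\theta\,dt$ is expressed through the time term, the flux term $\tfrac2m\iint(u^N)^{m/2}\nabla^N(u^N)^{m/2}\cdot\nabla^N\varphi\,d\theta\,dt$, and an $O(K^2/N)$ error; these are bounded by $C\|\varphi\|_{H^1_0(Q_T)}$ using only $0\le u^N,v^N\le1$ and the energy bound on $\nabla^N(u^N)^{m/2}$, which yields $\sup_N\|\,|\nabla^N(u^N)^{m/2}|^2\,\|_{H^{-1}(Q_T)}<\infty$ directly, with no separate treatment of any $K$-term. (A minor further slip: the sign of the replacement term in your formula for $\partial_t w^N$ is reversed, but this is harmless since it is $O(K^2/N)$ in absolute value.)
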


\begin{proof}
First we show (\ref{case3 est2}). By the similar argument given in Lemma \ref{u relcpt}, we can show the assertion for $u^N$ in view of Lemmas \ref{react int} and \ref{energy u} with help of the Fr\'{e}chet-Kolmogorov theorem and we further obtain $u \in L^2 (0,T; H^1 (\mathbb{T}^d) )$. On the other hand, the assertion for $(u^N)^{m/2}$ is proved in the same manner. Indeed, multiply $(u^N)^{m-1}$ on both sides of the first equation of (\ref{case3}) to get 
\[
 \partial_t (u^N )^m/m = (u^N )^{m-1} \Delta^N u^N - K (u^N )^m v^N .
\]
Then, integrating the above identity over $Q_T$, integration by parts enables us to calculate
\begin{equation}
\label{IBP}
\begin{aligned}
& \iint_{Q_T} \nabla^N (u^N )^{ m - 1 } \cdot  \nabla^N u^N  d\theta dt \\
& = \frac{1}{m}  \int_{ \mathbb{T}^d } \big( u^N (0)^m - u^N ( T )^m \big) d\theta  - \iint_{Q_T} K (u^N )^m  v^N d\theta dt 
\le \frac{1}{m} 
\end{aligned}
\end{equation}
where in the last estimate we neglected the negative terms recalling the positivity of the discrete solutions $u^N $ and $v^N$. On the other hand, the integrand in the left-hand side of the above display is close to $4 m^{-2 } ( m-1) | \nabla^N (u^N)^{m/2} |^2 $ as $N$ tends to infinity. Indeed, for any $\alpha \in \mathbb{R}$, according to the mean value theorem for the function $z \mapsto z^\alpha$, $z \in \mathbb{R}$, for every $x \in \mathbb{T}^d_N$ there exists $\hat{u}^N_j (x)$ between $u^N (x)$ and $u^N (x + e_j)$ such that
\[
\nabla^N u^N (x)^\alpha - \alpha  u^N (x)^{\alpha -1 } \nabla^N u^N ( x ) 
= \big(  \alpha \partial^N_j u^N (x) ( \hat{u}^N_j (x)^{\alpha -1}  - u^N (x )^{\alpha - 1 } )  \big)_{  j =1,...,d  }. 
\]
In view of Lemma \ref{grad u}, the right-hand side has order $O ( K / N ) $ and goes to zero as $N$ tends to infinity. By this line, we have that $ \nabla^N (u^N )^{ m - 1 } \cdot  \nabla^N u^N  $ is close to $4 m^{-2 } ( m-1) | \nabla^N (u^N)^{m/2} |^2$ in the sense of $L^1 (Q_T)$-norm as $N$ tends to infinity. Therefore, according to the estimate \eqref{IBP}, we obtain 
\[
\sup_{N \in \mathbb{N}} \iint_{Q_T} | \nabla^N ( u^N )^{m/2} |^2 d\theta dt < \infty 
\] 
so that $u^{m/2} \in L^2 (0, T; H^1 (\mathbb{T}^d) )$. The weak convergence (\ref{case3 est3}) is obvious since $v^N (t,\theta) $ takes value in $[0,1]$ for all $t \in [0,T]$ and $\theta \in \mathbb{T}^d$. 

Next we show (\ref{case3 est4}). Since the $H^1(Q_T)$-norm of $u^N$ is bounded uniformly in $N$, repeating the same argument conducted in preceding two sections, the reaction term in the second equation of (\ref{case3}) can be rearranged to the spatially homogeneous one. Namely, we have  
\[
\partial_t v^N (t,x) = - K u^N (t,x)^m v^N(t,x) + O  ( K^2 / N )
.\]
After multiplying $u^N(t,x)^{m-1}$ to both sides of the first equation of (\ref{case3}), we subtract the second equation from the first one to cancel the divergent reaction term. Then, after hitting any test function $\varphi \in H^1_0 (Q_T)$, integration by parts enables us to obtain
\begin{equation}
\label{dcase3 weak}
\begin{aligned}
\iint_{Q_T} \left[ - \left( \frac{ ( u^N )^m}{m} - v^N  \right) \varphi_t  + \frac{2}{m} (u^N)^{\frac{m}{2}}  \nabla^N  (u^N)^\frac{m}{2} \cdot \nabla^N \varphi \right] d\theta dt  \\
+ \iint_{Q_T} \frac{ 4 (m-1) }{m^2}  | \nabla^N  (u^N)^{ m / 2 }  |^2 \varphi  d\theta dt 
+  O ( K^2 / N ) 
= 0 
\end{aligned}
\end{equation}
where we used a chain rule for discrete gradient: for every $\alpha \in \mathbb{R}$ and $x \in \mathbb{T}^d_N $ 
\[
| \nabla^N u^N (x)^\alpha - \alpha u^N (x)^{\alpha - 1} \nabla^N u^N (x )  | = O (K / N )
\]
as $N$ tends to infinity, which has already been proved in the above. Therefore there exists a positive constant $C$ such that
\[
\iint_{Q_T} | \nabla^N u^N  |^2 \varphi d\theta dt \leq C \| \varphi \|_{H^1_0 (Q_T)} + O(  K^2 /N  ) 
\]
for every $\varphi \in H^1_0 ( Q_T )$, which implies 
\[
\sup_{N \in \mathbb{N}} \|  | \nabla^N (u^N)^{m/2} |^2   \|_{H^{-1} (Q_T)}  < \infty
\]
and thus we end proof of (\ref{case3 est4}). Since any weak$^*$ limits of the sequence $\{ | \nabla^N ( u^N )^{m/2}  |^2  \}_{N \in \mathbb{N}}$ stay non-negative, combining the general estimates stated in Section 4, all properties in (\ref{case3 est1}) clearly hold. 

Finally, taking limit in (\ref{dcase3 weak}) along the common subsequence $(N_k)$ we obtained above, it follows that $u,v$ and $\zeta$ satisfy the weak form (\ref{case3 weak}) and hence we complete the proof. 
\end{proof}

The weak form (\ref{case3 weak}) is the same as the one which was derived in \cite{IMMN17}. They identify $\zeta$ as $| \nabla u^{m/2} |^2$ and further characterize behavior of the limiting interface as follows.

\begin{proposition}[\cite{IMMN17}]
\label{immovable}
Assume the same conditions as in Theorem \ref{case3 thm}. Let $u,v$ and $\zeta$ be the functions given in Theorem \ref{case3 thm} and assume there exists a positive constant $m_v$ such that $v(0, \cdot) \geq m_v$ in ${\rm supp } (v(0, \cdot) )$. Suppose that $\Gamma (t) $ is a smooth, closed and orientable hypersurface in $\mathbb{T}^d$ and that $\Gamma (t)$ smoothly moves with a normal speed $V$ from $\Omega^u(t)$ to $\Omega^v(t)$. Moreover, suppose that $u$ (resp. $v$) is smooth in $\overline{ Q^u_T }$ (resp. $\overline{Q^v_T} $) and that $\zeta \in L^1_{\text{loc}} (Q_T)$. Then $u, v$ and $\zeta$ satisfy the followings:
\[
\begin{aligned}
&V \equiv 0  &&\text{ on } \Gamma \\
&\partial_t u = \Delta u && \text{ in } (0, T ] \times \Omega^u (0)  \\
&u=0 && \text{ on } (0, T] \times \Gamma(0)  \\
&v=v_0 ,\quad  \zeta = | \nabla u^{m/2} |^2 && \text{ in } Q_T  
\end{aligned}
\]
\end{proposition}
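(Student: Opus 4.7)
The strategy is to unfold the weak formulation \eqref{case3 weak} separately on each smooth phase $Q^u_T$ and $Q^v_T$, and then combine the resulting local equations with the monotonicity of $v^N$ inherited from the microscopic dynamics to pin down both $V$ and $\zeta$.

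First I would work in $Q^v_T$, where $u\equiv 0$ by $uv=0$ and $v$ is smooth by assumption. Testing \eqref{case3 weak} with $\varphi\in C^\infty_c(Q^v_T)$ (extended by zero to $Q_T$) kills the $u^m$ and $u^{m/2}\nabla u^{m/2}$ contributions, so after integration by parts one obtains
\[
\iint_{Q^v_T}\Bigl(-\partial_t v+\tfrac{4(m-1)}{m^2}\zeta\Bigr)\varphi\,d\theta\,dt=0
\]
for every such $\varphi$; hence $\partial_t v=\tfrac{4(m-1)}{m^2}\zeta$ a.e.\ in $Q^v_T$. On the other hand, the second equation of \eqref{case3} integrates explicitly to give $v^N(t,x)\le v^N(0,x)$ for every $(t,x)$, so $\partial_t v^N\le 0$ in $\mathcal{D}'(Q_T)$; passing to the weak limit yields $\partial_t v\le 0$ distributionally. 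Since $\zeta\ge 0$ and $m>1$, the two signs are compatible only when both vanish, so $\partial_t v=0$ and $\zeta=0$ in $Q^v_T$. Consequently $v\equiv v_0$ there, and since $v_0\ge m_v>0$ on $\mathrm{supp}(v_0)$ the set $\Omega^v(t)$ coincides with $\Omega^v(0)$ for every $t\in[0,T]$, which forces $V\equiv 0$ on $\Gamma$ and $\Omega^u(t)=\Omega^u(0)$.

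Next I would treat $Q^u_T$, where $u>0$ is smooth on $\overline{Q^u_T}$ and $v\equiv 0$. On any compact subset $\mathcal{K}\Subset Q^u_T$ the limit $u$ is bounded below by a positive constant; using this together with the strong $L^p$ convergence of $u^{N_k}$, the uniform $H^1$ bound on $(u^N)^{m/2}$ from the proof of Theorem~\ref{case3 thm}, and an Aubin--Lions type argument applied to the semi-discretized equation for $u^N$, one promotes the weak $H^1$ convergence to strong $H^1_{\mathrm{loc}}$ convergence of $(u^{N_k})^{m/2}$ on $Q^u_T$, which identifies $\zeta=|\nabla u^{m/2}|^2$ there. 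Substituting this into \eqref{case3 weak} tested against $\varphi\in C^\infty_c(Q^u_T)$, and using the classical identities $\nabla u^m=mu^{m-1}\nabla u$ and $|\nabla u^{m/2}|^2=\tfrac{m^2}{4}u^{m-2}|\nabla u|^2$, after integration by parts the $(m-1)u^{m-2}|\nabla u|^2$ contributions cancel exactly and one is left with $\iint_{Q^u_T} u^{m-1}(\partial_t u-\Delta u)\varphi\,d\theta\,dt=0$. Since $u>0$ on $Q^u_T$ this gives $\partial_t u=\Delta u$ classically on $\Omega^u(0)$. The Dirichlet condition $u=0$ on $(0,T]\times\Gamma(0)$ then follows from the continuity of $u$ on $\overline{Q^u_T}$ combined with $uv=0$ and $v\ge m_v$ on the $\Omega^v$ side of $\Gamma$.

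The identification $\zeta=|\nabla u^{m/2}|^2$ is patched globally on $Q_T$: both sides vanish on $Q^v_T$ and agree on $Q^u_T$ by the previous step, while the hypothesis $\zeta\in L^1_{\mathrm{loc}}(Q_T)$ rules out any singular concentration on the lower-dimensional set $\Gamma$. The main obstacle will be the strong $H^1_{\mathrm{loc}}$ convergence of $(u^{N_k})^{m/2}$ on $Q^u_T$; without it the identification of $\zeta$ is stuck at the weak lower semicontinuity inequality $\zeta\ge|\nabla u^{m/2}|^2$ and the bulk equation cannot be closed. A secondary subtlety is the treatment of $\Gamma$: in the absence of the $L^1_{\mathrm{loc}}$ hypothesis on $\zeta$, a surface concentration on $\Gamma$ would appear in \eqref{case3 weak} as an effective flux term capable of driving interface motion, and that hypothesis is precisely what excludes this pathology.
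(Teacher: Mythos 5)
First, note that the paper itself offers no proof of this statement: by the paper's stated convention, \emph{Proposition} \ref{immovable} is quoted from \cite{IMMN17}, so your attempt can only be measured against the standard PDE argument behind that result, not against anything in this article.

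Your proposal has a genuine gap at the very heart of the proposition, namely the proof that $V\equiv 0$. From interior test functions $\varphi\in C^\infty_c(Q^v_T)$ you correctly get $\partial_t v=\tfrac{4(m-1)}{m^2}\zeta\ge 0$ inside $Q^v_T$, and combined with the monotonicity of $v^N$ this gives $\partial_t v=0$ and $\zeta=0$ \emph{inside} $Q^v_T$. But the step ``hence $\Omega^v(t)=\Omega^v(0)$, which forces $V\equiv 0$'' does not follow: nothing in that interior analysis prevents $v(\cdot,\theta)$ from dropping from $v_0(\theta)$ to $0$ at some time $t_0(\theta)$, since such a jump sits on the boundary of $Q^v_T$ and is invisible to test functions supported in $Q^v_T$; this is exactly what happens in \textbf{Case 2}, where $v=v_0$ on $Q^v_T$ and yet the interface moves. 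The immovability must come from test functions whose support crosses $\Gamma$: splitting \eqref{case3 weak} into $Q^u_T$ and $Q^v_T$ and integrating by parts on each phase produces a Rankine--Hugoniot (Stefan-type) condition on $\Gamma$, roughly $v|_{\Gamma}\,V=-\tfrac1m\,\partial_{n_\Gamma}(u^m)|_{\Gamma}$, with no surface contribution from $\zeta$ precisely because $\zeta\in L^1_{\mathrm{loc}}(Q_T)$. Then, since $u$ is smooth up to $\Gamma$, $u|_\Gamma=0$ and $m>1$, one has $\partial_{n_\Gamma}(u^m)=m\,u^{m-1}\partial_{n_\Gamma}u=0$ on $\Gamma$, and $v|_\Gamma\ge m_v>0$ yields $V\equiv 0$. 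This degenerate-flux mechanism, which is the whole point of the immovable regime and the only place where $m>1$, the smoothness up to $\Gamma$, the bound $v_0\ge m_v$ and the hypothesis $\zeta\in L^1_{\mathrm{loc}}$ are all genuinely used, is absent from your argument (you invoke $\zeta\in L^1_{\mathrm{loc}}$ only as an afterthought).

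There is a second gap in the identification $\zeta=|\nabla u^{m/2}|^2$ on $Q^u_T$, which you yourself flag. The interior form of \eqref{case3 weak} in $Q^u_T$ gives the single relation $u^{m-1}(\partial_t u-\Delta u)=\tfrac{4(m-1)}{m^2}\bigl(|\nabla u^{m/2}|^2-\zeta\bigr)$, so the heat equation and the identification of $\zeta$ cannot be disentangled without extra input; weak convergence only yields $\zeta\ge|\nabla u^{m/2}|^2$. Your proposed fix --- strong $H^1_{\mathrm{loc}}$ convergence of $(u^{N_k})^{m/2}$ via ``an Aubin--Lions type argument'' --- is not an argument: Aubin--Lions gives strong convergence in a norm weaker than the gradient bound, never strong convergence of the gradients themselves; that requires an energy identity (convergence of the Dirichlet energies), i.e.\ a matching upper bound $\limsup\iint|\nabla^{N}(u^{N})^{m/2}|^2\varphi\le\iint|\nabla u^{m/2}|^2\varphi$ obtained by testing the (semi-discretized) equations and controlling the reaction term, or alternatively one first fixes the domain via $V\equiv 0$ and identifies $\zeta$ through the energy identity for the limit problem on $(0,T]\times\Omega^u(0)$. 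As written, both of the proposition's substantive conclusions ($V\equiv 0$ and $\zeta=|\nabla u^{m/2}|^2$, hence also $\partial_t u=\Delta u$) remain unproved.
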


\section*{Acknowledgments}
The author would like to thank Tadahisa Funaki, Hirokazu Ninomiya, Makiko Sasada and Kenkichi Tsunoda for giving him fruitful comments.

%   References   %

\end{document}